\title[Orbital stability and instability for the $\phi^{4n}$-models]{Orbital stability and instability of periodic wave solutions for $\phi^{4n}$-models}
\author[G. Chen]{Gong Chen}
\address{Fields Institute for Research in Mathematical Sciences,  222 College St, Toronto, ON Canada, M5T 3J1}
\email{gchen@fields.utoronto.ca, gc@math.toronto.edu}
\thanks{G.C. was supported by Fields Institute for Research in Mathematical Sciences via Thematic Program on Mathematical Hydrodynamics
}
\author[J. M. Palacios]{Jos\'e M. Palacios}
\address{Institut Denis Poisson, Universit\'e de Tours, Universit\'e d'Orleans, CNRS, Parc Grandmont 37200, Tours, France}
\email{jose.palacios@lmpt.univ-tours.fr}
\newcommand{\be}{\begin{equation}}
\newcommand{\ee}{\end{equation}}
\newcommand{\bp}{\begin{proof}}
\newcommand{\ep}{\end{proof}}
\newcommand{\bel}{\begin{equation}\label}
\newcommand{\eeq}{\end{equation}}
\newcommand{\bea}{\begin{eqnarray}}
\newcommand{\eea}{\end{eqnarray}}
\newcommand{\bee}{\begin{eqnarray*}}
\newcommand{\eee}{\end{eqnarray*}}
\newcommand{\ben}{\begin{enumerate}}
\newcommand{\een}{\end{enumerate}}
\newcommand{\R}{\mathbb{R}}
\newcommand{\N}{\mathbb{N}}
\newcommand{\Z}{\mathbb{Z}}
\newcommand{\T}{\mathbb{T}}
\newtheorem{thm}{Theorem}[section]
\newtheorem{lem}[thm]{Lemma}
\newtheorem{prop}[thm]{Proposition}
\theoremstyle{remark}
\newtheorem{rem}{Remark}[section]
\definecolor{codegreen}{rgb}{0,0.6,0}
\definecolor{codegray}{rgb}{0.5,0.5,0.5}
\definecolor{codepurple}{rgb}{0.58,0,0.82}
\definecolor{backcolour}{rgb}{0.95,0.95,0.92}
\lstdefinestyle{mystyle}{
	backgroundcolor=\color{backcolour},   
	commentstyle=\color{codegreen},
	keywordstyle=\color{magenta},
	numberstyle=\tiny\color{codegray},
	stringstyle=\color{codepurple},
	basicstyle=\footnotesize,
	breakatwhitespace=false,         
	breaklines=true,                 
	captionpos=b,                    
	keepspaces=true,                 
	numbers=left,                    
	numbersep=5pt,                  
	showspaces=false,                
	showstringspaces=false,
	showtabs=false,                  
	tabsize=2
}
\numberwithin{equation}{section}
\pgfplotsset{compat=newest}
\theoremstyle{definition}
\numberwithin{ej}{section}
\begin{document}
%\begin{sf}
%==============PORTADA=====================

%==============CAMBIO DE MARGENES=====================
%\newgeometry{top=2.5cm, bottom=2.5cm, right=2.0cm, left=1.8cm}

%==============ENCABEZA/PIE DE PAGINA=====================
%\newpage
%\pagestyle{fancy}
%\fancyhf{}

%Encabezado
%\fancyhead[L]{\rightmark}
%\fancyhead[L]{\small \rm \textit{\curso}} %Izquierda
%\fancyhead[R]{\small \rm \textit{}} %Derecha

%\fancyfoot[L]{\small \rm \textit{\titulo.}} %Izquierda
%\fancyfoot[R]{\small \rm \textbf{\thepage}} %Derecha
%\fancyfoot[C]{\thepage} %Centro

\renewcommand{\sectionmark}[1]{\markright{\thesection.\ #1}}
\renewcommand{\headrulewidth}{0.5pt}
\renewcommand{\footrulewidth}{0.5pt}
\begin{abstract}
In this work we study the orbital stability/instability in the energy space of a specific family of periodic wave solutions of the general $\phi^{4n}$-model for all $n\in\N$. This family of periodic solutions are orbiting around the origin in the corresponding phase portrait and, in the standing case, are related (in a proper sense) with the aperiodic Kink solution that connect the states $-\tfrac{1}{2}$ with $\tfrac{1}{2}$. In the traveling case, we prove the orbital instability in the whole energy space for all $n\in\N$, while in the standing case we prove that, under some additional parity assumptions, these solutions are orbitally stable for all $n\in\N$. Furthermore, as a by-product of our analysis, we are able to extend the main result in \cite{deNa} (given for a different family of equations) to traveling wave solutions in the whole space, for all $n\in\N$.
\end{abstract}
\maketitle 
\section{Introduction}

\subsection{The model} In this work we seek to extend the analysis carried out by the second author in \cite{Pa}. Specifically, this paper is concerned with the stability properties of traveling/standing wave solutions to the $1+1$ dimensional $\phi^{4n}$-equation on the torus (see for example \cite{Lo}): \begin{align}\label{phif}
\partial_{t}^2\phi-\partial_x^2\phi=-\lambda_nV'_n(\phi), \quad t\in\R,\  x\in\T_L:=\R/L\Z,
\end{align}
where $\lambda_n\in\R$ is a positive parameter and $V_n(\phi)$ is given by the following class of potentials: \begin{align}\label{potential}
V_{n}(\phi):=\prod_{k=1}^n\left(\phi^2-v^2\big(k-\tfrac{1}{2}\big)^2\right)^2, \quad v>0.
\end{align}
Here, $\phi(t,x)$ denotes a real-valued $L$-periodic function. This family of equations corresponds to a  generalization of the celebrated $\phi^4$-equation in Quantum Field Theory, which arises as a model for self-interactions of scalar fields (represented by $\phi$). In particular, in the case $n=1$, equation \eqref{phif} is one of the simplest examples where to apply Feynman diagram techniques to do perturbative analysis in quantum theory.  

\medskip

The $\phi^4$-model has been extensively studied from both, a mathematical and a physical point of view. Especially, this equation has been a ``workhorse'' of the Ginzburg-Landau (phenomenological) theory of superconductivity, taking $\phi$ as the order parameter of the theory, that is, the macroscopic wave function of the condensed phase \cite{KeCu}. In particular, the $\phi^4$-equation has been derived as a simple continuum model of lightly doped polyacetylene \cite{Ri}. We refer the interested reader
to \cite{MaPa,PeSc,Va} for some other physical motivations.

\medskip

On the other hand, equation \eqref{phif} belongs to a bigger family of equations called the $P(\phi)_2$-theory, which considers general polynomial self-interactions of scalar fields, where the potential is assumed to be of the form $V(\phi)=(P(\phi))^2$, where $P(\cdot)$ corresponds to some polynomial and the potential $V$ is asked to be even. The first examples of such theory are the famous $\phi^4$, $\phi^6$ and $\phi^8$ models (notice that $\phi^6$ does not belongs to our current framework 
\eqref{potential}). In this setting, the self-interaction intensity is quantified by $V(\phi)$, and clearly sets the dynamics of the field \cite{Lo}.

\medskip

One interesting feature of the $\phi^{4n}$-model (and generally of the $P(\phi)_2$-theory) is that, as $n$ goes to infinity, for a proper selection of parameters $\lambda_n$, equation \eqref{phif} is converging to the so-called sine-Gordon equation \[
\partial_t^2\phi-\partial_x^2\phi+\sin\phi=0.
\] 
Roughly speaking, in order to recover the sine-Gordon  as a limiting equation of \eqref{phif}, the parameter $\lambda_n$ has to be chosen so that, for $n\in\N$ sufficiently large ($v=1$), \[
\lambda_n^{-1}=\pi^2\prod_{k=1}^n\big(k-\tfrac{1}{2}\big)^2+\varepsilon(n),
\]
where $\varepsilon:\N\to\R$ is any function converging to zero sufficiently fast as $n$ goes to infinity. Additionally, notice that, as $n$ increases, one is adding more and more different minima to the potential $V_n$ in \eqref{potential} (see Figure \ref{fig1}). Correspondingly, more soliton sectors. As a result, these polynomial theories are in general more difficult to handle than the sine-Gordon theory, although for $n$ large, one would expect the soliton properties to approach those of sine-Gordon solitons \cite{Lo}.

\medskip

From a mathematical point of view, equation \eqref{phif} can also be understood as a particular case of the general family of nonlinear Klein-Gordon equations: \begin{align}\label{klein}
\partial_t^2\phi-\partial_x^2\phi=m\phi+f(\phi),
\end{align}
where $m\in\R$ and $f:\R\to\R$ denotes the nonlinearity. Many important nonlinear models can be recovered as particular cases of this latter equation, such as the whole $\phi^{4n}$-family \eqref{phif}, as well as the $\phi^{4n+2}$-family and the sine-Gordon equations (see \cite{Lo} for the explicit form of the $\phi^{4n+2}$-family). Interestingly, under rather general assumptions it is still possible to obtain some stability results for model \eqref{klein}. We refer the reader to \cite{De,KMM2} for a fairly general theory for small solutions to equation \eqref{klein} and to \cite{LiSo,St} for studies of the long time asymptotics for some generalizations of equation \eqref{phif} with variable coefficients.

\medskip

On the other hand, since \eqref{phif} corresponds to a wave-like equation, it can be rewritten in the standard form as a first order system for $\vec{\phi}=(\phi_1,\phi_2)$ as \begin{align}\label{phif_2}
\begin{cases}
\partial_t\phi_1=\phi_2,
\\ \partial_t\phi_2=\partial_x^2\phi_1-\lambda_nV_n'(\phi_1).
\end{cases}
\end{align}
Moreover, from the Hamiltonian structure of the equation it follows that, at least formally, the energy of system \eqref{phif_2} is conserved along the trajectory, that is,
\begin{align}\label{energy} 
\mathcal{E}(\vec{\phi}(t))&:=\dfrac{1}{2}\int_0^L \big(\phi_2^2+\phi_{1,x}^2+2\lambda_nV_n(\phi_1)\big)(t,x)dx=\mathcal{E}(\vec{\phi}_0).
\end{align}
Besides, the conservation of momentum shall also play a fundamental role for our current purposes, which is given by:
\begin{align}\label{momentum}
\mathcal{P}(\vec{\phi}(t)):=\int_0^L \phi_2(t,x)\phi_{1,x}(t,x)dx=\mathcal{P}(\vec{\phi}_0).
\end{align}
We point out that, from these two conservation laws it follows that $H^1(\T_L)\times L^2(\T_L)$ defines the natural energy space associated to system \eqref{phif_2}.

\medskip

Additionally, equation \eqref{phif} is known for satisfying several symmetries. Among the most important ones we have the invariance under space and time translations. It is worth to notice that, in the aperiodic setting there is an extra invariance, the so-called  Lorentz boost, that means, if $\vec{\phi}(t,x)$ is a solution to the equation, then so is \[
\vec{\varphi}(t,x):=\vec{\phi}\big(\gamma(t-\beta x),\gamma(x-\beta t)\big) \quad \hbox{where}\quad \gamma^{-1}:= \sqrt{1-\beta^2} \quad \hbox{and}\quad \beta\in(-1,1). 
\]
However, this transformation does not let the period fixed, and hence, strictly speaking, it is not an invariance of the equation in our current setting.

\medskip

Now, in order to motivate our work we recall that, for general nonlinear evolution equations, two of the most important objects in nonlinear dynamics are traveling and standing wave solutions, particularly in the context of dispersive PDEs due to the so-called \emph{soliton conjecture}. The existence and (if the case) the corresponding orbital stability of such type of solutions have become a fundamental issue in the area. In this regard, we prove the existence of at least one branch of traveling wave solutions to equation \eqref{phif} in the periodic setting, as well as one associated branch of standing wave solutions. Nonetheless, we remark that, up to the best of our knowledge, for $n>2$ these solutions have no explicit form, which has been an important problem in this work.

\medskip

One of the key points in our analysis is the use of classical results of Grillakis-Shatah-Strauss (see \cite{GSS}) which set a general framework to study the orbital stability/instability for both traveling and standing wave solutions. These general results are based on the spectral information of the linearized Hamiltonian around these specific solutions. Thereby, it is worthwhile to notice that, in the real-valued case, equation \eqref{phif_2} can be rewritten in the abstract Hamiltonian form as \[
\partial_t\vec\phi=\mathbf{J}\mathcal{E}'(\vec\phi) \quad \hbox{where} \quad \mathbf{J}:=\left(\begin{matrix}
0 & 1 \\ -1 & 0 
\end{matrix}\right),
\]
where $\mathcal{E}'$ denotes the Frechet derivative of the conserved energy functional $\mathcal{E}$ in \eqref{energy}.

\medskip

Regarding the orbital stability of explicit solutions to equations \eqref{phif} and \eqref{klein}, there exists a vast literature regarding the aperiodic case. We refer the reader to \cite{HPW} for a classical and rather general result about orbital stability of Kink solutions for Klein-Gordon equations, and to \cite{KMM,KMMH} for some interesting results regarding asymptotic stability of Kink solutions for general scalar-field equations (see also \cite{AlMuPa3} for a recent work in this direction in the case of sine-Gordon). We also refer to \cite{Cu} for an study of the asymptotic stability properties of this type of solutions in dimension $3$. Nevertheless, for the periodic setting, there are not that many well-known results. We refer the reader to \cite{AnNa2,NaCa,NaPa} for the treatment of periodic solutions for a specific type of Klein-Gordon equations. Specifically, the first two of these works considers the stability problem of periodic solutions with $-\phi+\vert \phi\vert^4\phi $ as right-hand side in \eqref{phif}, while the third one considers $+\vert\phi\vert^2\phi$ and $-\phi+\vert\phi\vert^2\phi$ as right-hand sides. We emphasize that none of the $\phi^{4n}$ equations (for no $n\in\N$) fit any of these settings. On the other hand, as mentioned before, for the case $n=1$, the orbital in/stability of traveling/standing wave solutions to equation \eqref{phif} was already treated in \cite{Pa}. Regarding the stability of periodic wavetrains, we refer the reader to \cite{JoMaMiPl}. We remark that this latter result seems to be the first one (up to the best of our knowledge) for wavetrains in the periodic case (see also \cite{JoMaMiPl3}). On the other hand, we refer to \cite{DeMc,JoMaMiPl2} for stability results in a particularly interesting Klein-Gordon setting (but different from the previous-ones), the sine-Gordon equation. However, in the last two works, the authors are mostly focused in spectral and exponential stability, rather than in orbital stability. We point out that, in the previous case, the authors also deal with \emph{superluminal waves}, a case which we do not treat in this work. About the stability of periodic traveling waves in Hamiltonian equations that are first-order in time, we refer to \cite{DeUp} for stability results for the nonlinear Schr\"odinger equation and to \cite{An,DeKa,DeNi} for the KdV and mKdV settings. Finally, we refer the reader to \cite{AlMuPa} for an stability study for more complex periodic structures that do not fit into the framework of Grillakis \emph{et al.} \cite{GSS,GSS2}, such as spatiallty-periodic \emph{Breathers}. These are explicit solutions to the equation which behave as solitons but are also time-periodic. See also \cite{AlMuPa2,MP} for some stability results of aperiodic Breathers in the sine-Gordon equation.

\medskip

Finally, concerning the well-posedness of the equation, we recall that by applying the classical Kato theory for quasilinear equations we obtain the local well-posedness in the energy space $H^1(\T_L)\times L^2(\T_L)$ of equation \eqref{phif} (see \cite{Kato}).  We refer the reader to  \cite{De,HaNa,Kl,Kl2} for several other local and global well-posedness results in one-dimensional and higher dimensional Klein-Gordon equations.

\begin{figure}[h!]
   \centering
   \includegraphics[scale=0.445]{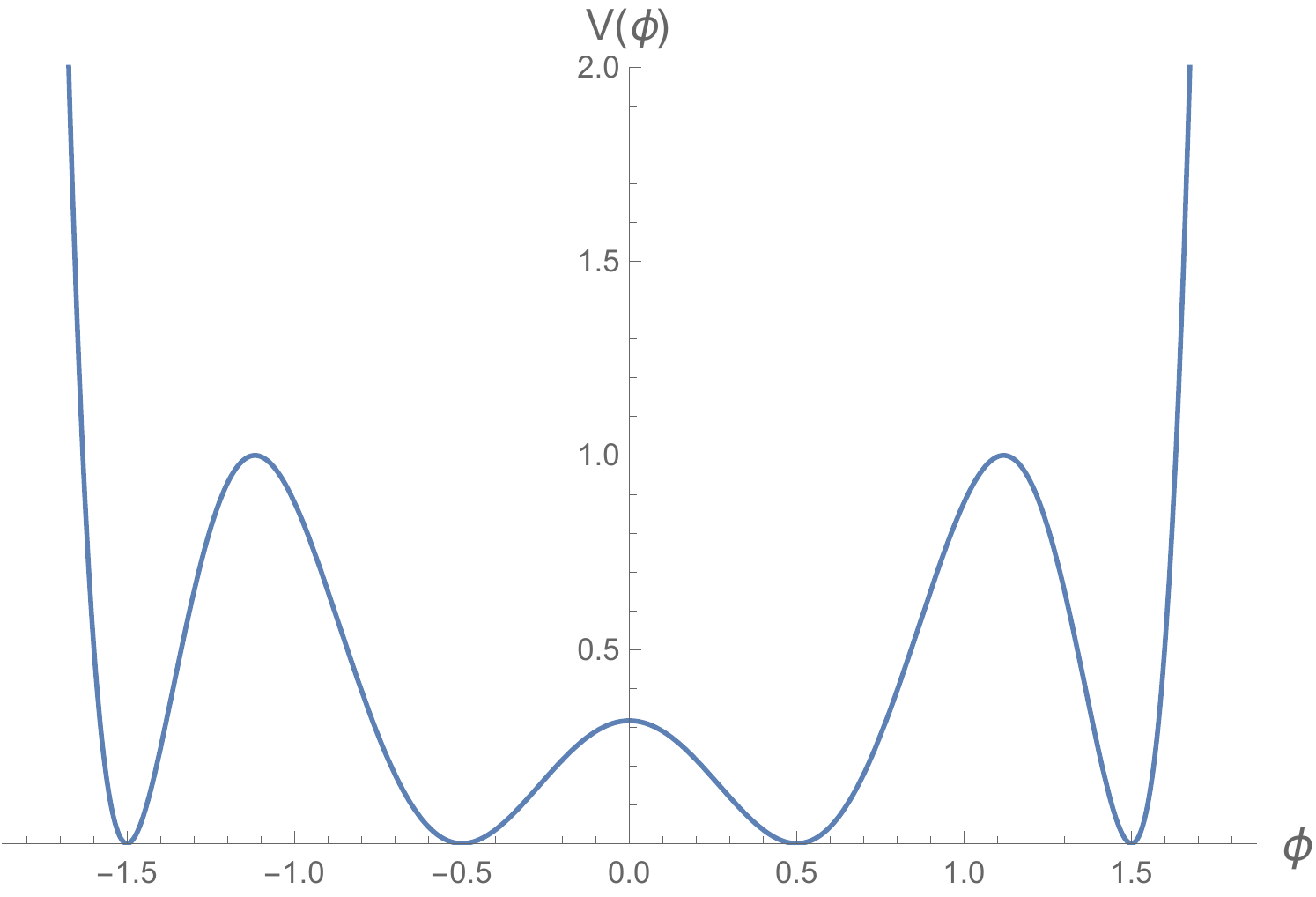}\qquad \quad 
   \includegraphics[scale=0.445]{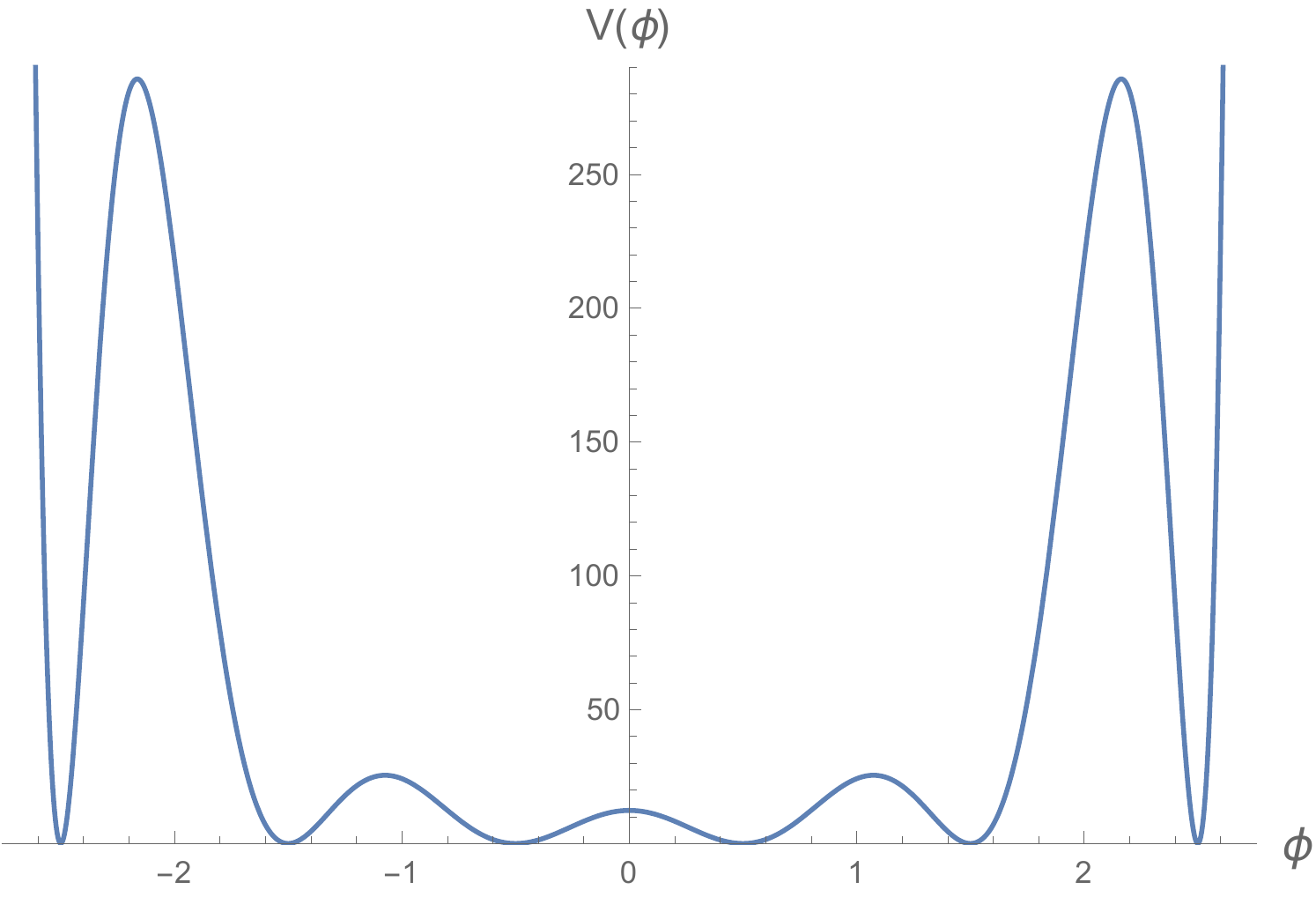}%%{DtWKmenosQ.pdf}  % requires the graphicx package
   \caption{On the left-hand we have $V_{n}(\phi)$ for $n=2$, that is, the potential associated to the $\phi^8$-model. On the right-hand we have $V_{n}$ for $n=3$, that is, the potential associated to the $\phi^{12}$-model.}
   \label{fig1}
\end{figure}

\subsection{Main results}

In order to present our main results, let us first define what it means for a solution to be \emph{Orbitally Stable}. We say that a traveling wave solution $\vec{\varphi}_c$ is orbitally stable if for all $\varepsilon>0$ there exists $\delta>0$ small enough such that for every initial data $\vec{\phi}_0\in X$, with $X:=H^1(\T_L)\times L^2(\T_L)$, satisfying $
\Vert \vec{\phi}_0-\vec{\varphi}_c\Vert_{X}\leq\delta$, then
\[
\sup_{t\in\R}\inf_{\rho\in[0,L)}\Vert\vec{\phi}(t)-\vec{\varphi}_c(\cdot-\rho) \Vert_{X}<\varepsilon.
\]
Additionally, we shall say that an odd-standing wave solution $\vec{\varphi}$ is orbitally stable in the \emph{odd energy space} $X_{\mathrm{odd}}:=H^1_{\mathrm{odd}}(\T_L)\times L^2_{\mathrm{odd}}(\T_L)$ if, for all $\varepsilon>0$, there exists $\delta>0$ small enough such that for every initial data $\vec{\phi}_0\in X_{\mathrm{odd}}$ satisfying $
\Vert \vec{\phi}_0-\vec{\varphi}\Vert_{X}\leq\delta$, then
\[
\sup_{t\in\R}\Vert\vec{\phi}(t)-\vec{\varphi} \Vert_{X}<\varepsilon.
\] 
Otherwise, we say that $\vec{\varphi}_c$ (respectively $\vec{\varphi}$) is orbitally unstable. In particular, this latter is the case when the solution ceases to exist in finite time.

\medskip

It is worth noticing that, even when it is not explicitly said, we shall always assume that $L$ is the fundamental period of $\vec{\varphi}_c$. In particular, we are only considering perturbations with exactly the same period as our fundamental solution.

\medskip

Now, in order to avoid overly introducing new notation and definitions in this introductory section, we shall only formally state our main results. We remark again that all the theorems below have already been proven in \cite{Pa} for the case $n=1$. Moreover, since there is no explicit solution for $n>2$, in the sequel, we shall refer to the specific family of solutions we are considering as ``periodic solutions orbiting around the origin'' (see section \ref{existence} below for further details).

\begin{thm}[Orbital instability of subluminal traveling waves]\label{MT1}
Let $n\in\N$ be arbitrary but fixed. Then, periodic traveling wave solutions ($c\in(-1,1)$) orbiting around the origin in the corresponding phase-portrait are orbitally unstable in the energy space by the periodic flow of the $\phi^{4n}$ equation.
\end{thm}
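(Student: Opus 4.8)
The plan is to apply the Grillakis--Shatah--Strauss (GSS) framework directly, since the paper has already advertised this as the key tool and has written the equation in Hamiltonian form $\partial_t\vec\phi=\mathbf{J}\mathcal{E}'(\vec\phi)$. For traveling waves with speed $c$, the natural conserved functional is $\mathcal{E}+c\,\mathcal{P}$, and a traveling wave profile $\vec\varphi_c$ is a critical point of this augmented functional. The stability/instability is then governed by two objects: the spectrum of the linearized operator $\mathcal{H}_c:=\mathcal{E}''(\vec\varphi_c)+c\,\mathcal{P}''(\vec\varphi_c)$ restricted to the energy space, and the sign of the scalar quantity $d''(c)$, where $d(c):=\mathcal{E}(\vec\varphi_c)+c\,\mathcal{P}(\vec\varphi_c)$ evaluated along the branch of waves. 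The GSS instability criterion states, roughly, that if $\mathcal{H}_c$ has exactly one negative eigenvalue (and the appropriate kernel coming from translation invariance), then the wave is orbitally unstable precisely when $d''(c)<0$.

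\textbf{First step.} I would reduce the full system \eqref{phif_2} to a scalar profile equation. Writing the traveling wave as $\vec\varphi_c(t,x)=(\varphi_c(x-ct),-c\,\varphi_c'(x-ct))$ and substituting into the equation, the second-order profile equation becomes $(1-c^2)\varphi_c''=\lambda_n V_n'(\varphi_c)$. Because $c\in(-1,1)$ is subluminal, the factor $(1-c^2)>0$, so after rescaling this is essentially the \emph{same} ODE that governs the standing waves, just with a modified coefficient. This is what justifies calling both families ``periodic solutions orbiting around the origin'': the traveling profiles are rescalings of the standing profiles, and their existence follows from the existence section referenced in the excerpt. This reduction also lets me transfer spectral information: the linearized operator in the traveling frame reduces to a scalar Hill-type operator $L_c:=-(1-c^2)\partial_x^2+\lambda_n V_n''(\varphi_c)$ acting on $L^2(\T_L)$, whose spectrum I can analyze via classical Floquet/Sturm--Liouville theory for periodic potentials.

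\textbf{Second step.} I would pin down the spectral count for $L_c$. Since $\varphi_c$ orbits around the origin and crosses zero, $\varphi_c'$ (which lies in the kernel of $L_c$) has exactly two zeros per period, so by oscillation theory $\varphi_c'$ is the \emph{second} eigenfunction; hence $L_c$ has exactly one simple negative eigenvalue and a one-dimensional kernel spanned by $\varphi_c'$. This is the ``good'' spectral picture GSS requires, and it is the content that the existence section must supply. After assembling the full $2\times2$ operator $\mathcal{H}_c$ from $L_c$ together with the diagonal $L^2$ block, I would verify via the standard reduction that $\mathcal{H}_c$ inherits exactly one negative direction and the expected kernel generated by spatial translation.

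\textbf{Third step, the main obstacle.} The crux is computing the sign of $d''(c)$, equivalently the sign of the derivative along the branch of the momentum/period map. The GSS theory reduces instability to showing $d''(c)<0$ (under the one-negative-eigenvalue hypothesis). The difficulty here, and what I expect to be the hardest part, is that for $n>2$ there is \emph{no explicit formula} for $\varphi_c$, so I cannot simply differentiate a closed-form expression. Instead I would parametrize the branch by the energy level of the profile ODE (or by the amplitude of the orbit in the phase plane) and express $d(c)$, and its derivatives, as complete elliptic-type integrals over one period. The key computation is then to extract the sign of $\frac{d}{dc}\big(\mathcal{P}(\vec\varphi_c)\big)$ using the scaling relation between the traveling and standing profiles established in the first step; because the spatial derivative picks up the factor $-c$ in $\phi_2$, the momentum scales in a controlled way with $c$, and one expects $d''(c)$ to carry a definite sign for all $c\in(-1,1)$ and all $n$. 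I would prove this sign via a monotonicity argument on the period function of the ODE together with the explicit $c$-dependence coming from the $(1-c^2)$ rescaling, rather than through direct evaluation. Once $d''(c)<0$ is established uniformly in $n$, orbital instability in the full energy space $X=H^1(\T_L)\times L^2(\T_L)$ follows immediately from the GSS instability theorem.
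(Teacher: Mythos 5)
Your overall route is the same as the paper's: the GSS framework with $d(c)=\mathcal{E}(\vec{\phi}_c)+c\,\mathcal{P}(\vec{\phi}_c)$, a one-negative-eigenvalue spectral picture for the linearized Hamiltonian, and the sign of $d''(c)$ extracted from monotonicity of the period map of the profile ODE combined with the $(1-c^2)$-scaling. The genuine gap is in your second step. For a periodic (Hill) operator, the Magnus--Winkler oscillation theorem only says the following: since $\varphi_c'$ has two zeros in $[0,L)$, the eigenvalue $0$ is either $\sigma_1$ or $\sigma_2$ in the list $\sigma_0<\sigma_1\leq\sigma_2<\sigma_3\leq\cdots$. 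It does \emph{not} tell you which of the two it is, nor that it is simple. If $0=\sigma_2$ with $\sigma_1<0$, then $\mathcal{L}_c$ has \emph{two} negative eigenvalues, and if $0=\sigma_1=\sigma_2$ the kernel is two-dimensional and fails the nondegeneracy GSS requires; in the first scenario $n(\mathcal{H}_c)-p(d'')=2-0$ is even, which is exactly the case the GSS instability theorem does not cover, so instability would not follow from $d''(c)<0$. Thus your sentence ``by oscillation theory $\varphi_c'$ is the second eigenfunction; hence $L_c$ has exactly one simple negative eigenvalue'' asserts precisely the hard point rather than proving it.

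The paper closes this gap with Floquet theory in the form of Neves' theorems (Theorems \ref{thm:Neves1} and \ref{thm:Neves2}): for the second solution $y$ of $\mathcal{L}_c y=0$ normalized by $W(\phi_c',y)=1$ one has $y(x+L)=y(x)+\theta\phi_c'(x)$, and the eigenvalue $0$ is simple and equals $\sigma_{2m-1}$ if $\theta<0$ (respectively $\sigma_{2m}$ if $\theta>0$). The crucial input is Lemma \ref{lem:thetaLbeta}, which identifies $\theta=-\partial L/\partial\beta$ by showing that $\partial_\beta\phi_c$ solves the same Cauchy problem as the normalized second solution; the sign $\theta<0$ then comes from the \emph{same} period monotonicity (Theorem \ref{thm_monotonicity}) that you invoke only in your third step. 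In other words, $\partial L/\partial\beta>0$ must be used twice, once to place $0$ at $\sigma_1$ as a simple eigenvalue with exactly one negative eigenvalue below it, and once to get $d''(c)<0$; your proposal uses it only for the latter. A secondary imprecision: on a fixed torus the traveling profiles are not rescalings of \emph{the} standing profile, since the rescaling $x\mapsto x/\sqrt{1-c^2}$ changes the period; the branch $c\mapsto\phi_c$ necessarily moves through different ODE energy levels $\beta(c)$, and it is exactly the fixed-period constraint $L=\sqrt{2\omega}\,\widetilde{L}(\omega\beta)$, differentiated in $c$, that yields the positivity of $\tfrac{d\beta}{dc}-\tfrac{2c}{\omega^2}\beta$ from which the paper concludes $d''(c)\leq-\tfrac{1}{\omega}\int_{\T_L}\phi_{c,x}^2\,dx<0$.
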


\begin{rem}
We refer the reader to Figure \ref{fig2} for a quick qualitative checking of the behavior of solutions to model \eqref{phif} around the origin in the corresponding phase-portrait.
\end{rem}

As discussed above, in order to obtain this result we use the general theory of Grillakis-Shatah-Strauss. Nevertheless, the results in \cite{GSS} require the existence of a non-trivial curve of solutions of the form $c\mapsto \phi_c$, which, in sharp contrast with the aperiodic setting, presents a delicate issue to overcome, and most of this work is devoted to address this problem.

\begin{thm}[Existence of a smooth curve of solutions]
Consider $n\in\N$ and let $L>0$ be arbitrary but fixed. There exists a non-trivial smooth curve of periodic solutions $c\mapsto \phi_c\in H^\infty(\T_L)$ orbiting around the origin in the corresponding phase-portrait.
\end{thm}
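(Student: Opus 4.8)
The plan is to pass from the PDE to the traveling-wave profile equation, reduce the construction to the analysis of a scalar period integral, and then invert that integral to parametrize the solutions by $c$. First I would insert the subluminal ansatz $\phi(t,x)=\phi_c(x-ct)$ into \eqref{phif}. Writing $\xi=x-ct$ and using $c^2<1$, the equation reduces to the autonomous, conservative second-order ODE
\[
(1-c^2)\,\phi_c''=\lambda_n V_n'(\phi_c),
\]
i.e.\ a Newtonian system with effective potential $-\tfrac{\lambda_n}{1-c^2}V_n$. Multiplying by $\phi_c'$ and integrating yields the first integral $\tfrac{1-c^2}{2}(\phi_c')^2=\lambda_n\big(V_n(\phi_c)-V_n(\beta)\big)$, where $\beta$ is the turning point. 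Since $V_n$ is even with a strict local maximum at $\phi=0$ and nearest zeros (minima) at $\phi=\pm\tfrac{v}{2}$, the origin is a center of the phase portrait, encircled by a one-parameter family of closed, hence $\xi$-periodic, orbits indexed by the amplitude $\beta\in(0,\tfrac{v}{2})$, each oscillating in $[-\beta,\beta]$. These are exactly the ``solutions orbiting around the origin''; up to translation they may be centered so that $\phi_c$ is even with $\phi_c(0)=\beta$.

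Next I would compute the fundamental period by quadrature. Solving for $\phi_c'$ and integrating over one oscillation gives
\[
T(c,\beta)=\sqrt{\tfrac{1-c^2}{\lambda_n}}\; I(\beta),\qquad I(\beta):=\sqrt{2}\int_{-\beta}^{\beta}\frac{d\phi}{\sqrt{V_n(\phi)-V_n(\beta)}}.
\]
The crucial structural feature is that $c$ enters only through the scalar prefactor $\sqrt{(1-c^2)/\lambda_n}$, while $I$ depends on $\beta$ (and $n$) alone. A boundary analysis shows $I(\beta)\to 2\pi/\sqrt{|V_n''(0)|}$ as $\beta\to 0^+$ (the linearized period) and $I(\beta)\to+\infty$ as $\beta\to(\tfrac{v}{2})^-$ (the turning point degenerating into the saddles at $\pm\tfrac{v}{2}$, so that the orbit approaches the heteroclinic Kink). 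Fixing the period equal to $L$ is therefore equivalent to solving, for each $c$,
\[
I(\beta)=L\sqrt{\tfrac{\lambda_n}{1-c^2}}.
\]

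Then I would invert this relation. Granting that $I$ is strictly monotone on $(0,\tfrac{v}{2})$ (see below), it is a smooth diffeomorphism onto $\big(2\pi/\sqrt{|V_n''(0)|},\,+\infty\big)$. Since the right-hand side ranges over $[L\sqrt{\lambda_n},+\infty)$ as $c$ runs over $(-1,1)$ and tends to $+\infty$ as $|c|\to 1$, for every $c$ in the admissible range — all of $(-1,1)$ when $L\ge 2\pi/\sqrt{\lambda_n|V_n''(0)|}$, and an open subinterval accumulating at $\pm 1$ otherwise — there is a unique amplitude $\beta(c)$ solving the displayed equation. Smoothness of $c\mapsto\beta(c)$ follows from the inverse function theorem once $I'(\beta)\neq 0$, and smoothness of $c\mapsto\phi_c$ then follows by differentiating the profile equation in $c$; finally $\phi_c\in H^\infty(\T_L)$ is immediate by bootstrapping, since the nonlinearity $V_n'$ is a polynomial and bounded periodic solutions of a smooth autonomous ODE are classical. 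This produces the desired non-trivial smooth curve.

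The hard part will be the monotonicity of the period function (equivalently $I'(\beta)\neq 0$), which underlies both the solvability for $\beta(c)$ and the smooth dependence on $c$. For $n>2$ neither $V_n$ nor the profiles are explicit, so this cannot be read off a closed form as in the $n=1$ case treated in \cite{Pa}. The strategy I would pursue is to rescale via $\phi=\beta s$ (or a monotone energy substitution) so that the $\beta$-derivative can be taken under a regularized integral, and then exploit the factored structure $V_n(\phi)=\prod_{k=1}^{n}\big(\phi^2-v^2(k-\tfrac12)^2\big)^2$ to fix the sign of $I'(\beta)$ — for instance via a monotonicity criterion phrased through the convexity of $V_n$ near the center or through the quantity $V_n/V_n'$, uniformly in $n$. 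Establishing this sign for all $n\in\N$ is where the bulk of the analytic work would lie.
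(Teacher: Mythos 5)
Your skeleton coincides with the paper's own route: the same reduction to the profile ODE $(1-c^2)\phi_c''=V_n'(\phi_c)$, the same phase-plane analysis showing $(0,0)$ is a center enclosed by a one-parameter family of closed orbits, the same quadrature formula for the period with the crucial factorization isolating the $c$-dependence in a prefactor $\sqrt{(1-c^2)/\lambda_n}$ (the paper does this by writing $L=\sqrt{2\omega}\,\widetilde{L}(\omega\beta)$ in \eqref{eq:periodfor1}), the same boundary limits, and the same inversion producing a unique $\beta(c)$ exactly when $L>\sqrt{\omega}\,\delta_n$; your identification $\delta_n=2\pi/\sqrt{|V_n''(0)|}$ even makes explicit a constant the paper leaves abstract, and parametrizing orbits by amplitude rather than energy level is only a cosmetic difference.

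However, there is a genuine gap, and it is precisely the part that constitutes the theorem's actual content: the strict monotonicity of the period integral $I$ (equivalently $\partial_\beta L>0$), on which both the solvability for $\beta(c)$ and the smoothness via the inverse function theorem rest, is never proved --- you explicitly defer it as ``where the bulk of the analytic work would lie.'' This is not a routine verification that can be waved at: it is essentially all of Section \ref{existence} of the paper. There, the authors invoke Chicone's criterion \cite{Chi}, which requires strict convexity of $-(V_n(x)-V_n(0))/(V_n'(x))^2$ on $(-\tfrac12,\tfrac12)$ --- note this is neither ``convexity of $V_n$ near the center'' nor anything phrased through $V_n/V_n'$, as your sketch suggests --- and reduce it to the polynomial inequality $\mathcal{V}_n(x)\geq 0$. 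Proving that inequality for \emph{all} $n$ occupies Lemmas \ref{mon_lem_1} and \ref{mon_lem_2} (combinatorial index-matching arguments on the sums $\Sigma_{n-1},\Sigma_{n-2},\Sigma_{n-3}$), the Taylor-type bound \eqref{improimpro}, and Proposition \ref{mon_prop_1}; and since $(V_n')^4$ vanishes like $x^4$ at the origin, one must additionally show that $\mathcal{V}_n$ vanishes to the same order there with a strictly positive limiting ratio, which requires delicate cancellations among the coefficients $\mathbf{A}$, $\mathbf{B}$, $\mathbf{C}$ in the decomposition $\tfrac{1}{96}\mathcal{V}_n=\mathbf{A}+\mathbf{B}x^2+\mathbf{C}x^4$. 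Nothing in your proposal substitutes for this: for $n>2$ no solution is explicit, and there is no a priori reason a period function of a planar Hamiltonian center is monotone (in general it need not be), so without this step the existence of the smooth curve $c\mapsto\phi_c$ is not established.
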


\begin{rem}
We point out that the domain on which $c$ is moving in the definition of $c\mapsto \phi_c$ is not always equals to $(-1,1)$ (see Theorem \ref{thm_monotonicity} below for further details). 
\end{rem}

The main obstruction in showing the previous theorem is due to both, the difficulty to handle the potential $V_n(\phi)$ for general $n\in\N$, as well as the fact that, for $n>2$, no explicit solution exists. In order to surpass this problem we use ODE results for Hamiltonian systems and several  combinatorial arguments to handle the potential.

\medskip

Notice that from the orbital instability theorem above we also conclude that the associated stationary solutions ($c=0$) are orbitally unstable. However, under some additional hypothesis we have the following result.

\begin{thm}[Orbital stability: stationary case]\label{MT2}
Let $n\in\N$ be arbitrary but fixed. Then, periodic standing wave solution ($c=0$) orbiting around the origin in the corresponding phase-portrait are orbitally stable by the periodic flow of the $\phi^{4n}$ equation under $(\mathrm{odd},\mathrm{odd})$ perturbations in the energy space.
\end{thm}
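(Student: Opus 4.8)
The plan is to deduce orbital stability from the abstract Lyapunov approach underlying \cite{GSS}, exploiting that within the odd energy space the translation symmetry is frozen (translating an odd profile destroys its parity), which is precisely why the definition of stability in $X_{\mathrm{odd}}$ given above carries no infimum over $\rho$. First I would record two reductions. The odd standing wave $\vec\varphi=(\varphi,0)$ is a critical point of the conserved energy $\mathcal{E}$ in \eqref{energy}, since $\varphi$ solves the stationary profile equation $\varphi''=\lambda_n V_n'(\varphi)$; and the odd sector $X_{\mathrm{odd}}$ is invariant under the flow of \eqref{phif_2}, because $V_n$ even forces $V_n'$ odd, so that odd initial data produce odd solutions. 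Together these reduce the whole statement to proving coercivity of the Hessian
\begin{align}\label{hess}
\big\langle \mathcal{E}''(\vec\varphi)(u,v),(u,v)\big\rangle=\int_0^L\big((u')^2+\lambda_n V_n''(\varphi)\,u^2+v^2\big)\,dx
\end{align}
over all $(u,v)\in X_{\mathrm{odd}}$, i.e.\ to showing that the Schr\"odinger operator $\mathcal{L}:=-\partial_x^2+\lambda_n V_n''(\varphi)$ is strictly positive on $L^2_{\mathrm{odd}}(\T_L)$.

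The central structural observation is a double symmetry of the potential $q:=\lambda_n V_n''(\varphi)$. Since $V_n''$ is even, $q$ is even; and since the orbiting-around-the-origin profile satisfies $\varphi(x+\tfrac L2)=-\varphi(x)$, evenness of $V_n''$ gives $q(x+\tfrac L2)=q(x)$, so that $q$ has period $\tfrac L2$. Hence $\mathcal{L}$ commutes with both the parity $P:x\mapsto-x$ and the half-shift $S:x\mapsto x+\tfrac L2$, which commute on $\T_L$, and its $L$-periodic eigenfunctions split into four joint eigenspaces indexed by $(\text{even/odd},\,\text{$\tfrac L2$-periodic}/\text{$\tfrac L2$-antiperiodic})$. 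I would locate the known objects in this grid: the everywhere-positive ground state of $\mathcal{L}$, carrying the unique negative eigenvalue $\mu_0<0$ already used for Theorem \ref{MT1}, is even and $\tfrac L2$-periodic; whereas $\varphi'$, which satisfies $\mathcal{L}\varphi'=0$ upon differentiating the profile equation, is even and $\tfrac L2$-antiperiodic, because $\varphi'(x+\tfrac L2)=-\varphi'(x)$. The odd perturbations then live exactly in the two remaining sectors, odd-periodic and odd-antiperiodic, and the task becomes showing $\mathcal{L}$ is strictly positive on each.

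This last spectral separation for a non-explicit profile is the hard part. Working with Floquet and Sturm oscillation theory for the $\tfrac L2$-periodic potential $q$, I would proceed as follows. Since $\mathcal{L}$ has exactly one negative eigenvalue (the even periodic ground state, as invoked in the instability analysis), the antiperiodic spectrum is non-negative; as $\varphi'$ realizes the antiperiodic eigenvalue $0$, this pins down the bottom antiperiodic eigenvalue at $\nu_0^A=0$. Using the interlacing $\nu_0^P<\nu_0^A\le\nu_1^A<\nu_1^P\le\cdots$ with $\nu_0^P=\mu_0<0$, every odd-periodic eigenvalue then exceeds $\nu_1^P>\nu_1^A\ge0$, which disposes of the odd-periodic sector. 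For the odd-antiperiodic sector, whose bottom is the odd member $\nu_1^A$ of the antiperiodic ground pair, it remains to rule out a second eigenfunction at $0$, i.e.\ to show the antiperiodic gap is \emph{open}, $\nu_1^A>\nu_0^A=0$. This non-degeneracy is where I expect the main difficulty to concentrate, and I would extract it from the strict monotonicity of the period of the orbit as a function of its energy established in Theorem \ref{thm_monotonicity}, via the standard bridge between a non-vanishing period derivative and simplicity of the translational antiperiodic eigenvalue. Controlling $V_n''(\varphi)$ for general $n$ inside this zero-count and gap analysis is exactly where the combinatorial handling of the potential developed earlier is needed.

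Granting $\mathcal{L}\ge\alpha>0$ on $L^2_{\mathrm{odd}}(\T_L)$, I would close with the usual energy argument. The bound \eqref{hess} upgrades to full coercivity $\langle\mathcal{E}''(\vec\varphi)(u,v),(u,v)\rangle\gtrsim\|(u,v)\|_X^2$ on $X_{\mathrm{odd}}$, after controlling $\|u'\|_{L^2}$ by combining the spectral gap with the quadratic form and the boundedness of $q$. Since $\mathcal{E}'(\vec\varphi)=0$ and $\mathcal{E}$ is conserved along the odd flow, a second-order Taylor expansion gives, for $\vec w(t):=\vec\phi(t)-\vec\varphi$ with odd initial data,
\begin{align}\label{lyap}
\mathcal{E}(\vec\phi_0)-\mathcal{E}(\vec\varphi)=\mathcal{E}(\vec\phi(t))-\mathcal{E}(\vec\varphi)=\tfrac12\big\langle\mathcal{E}''(\vec\varphi)\vec w(t),\vec w(t)\big\rangle+o\big(\|\vec w(t)\|_X^2\big),
\end{align}
whose right-hand side is $\gtrsim\|\vec w(t)\|_X^2$ for small data by coercivity, while the left-hand side is $O(\|\vec w(0)\|_X^2)$. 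Hence smallness of $\|\vec w(0)\|_X$ propagates for all time by a standard continuity/bootstrap argument, yielding the claimed orbital stability in $X_{\mathrm{odd}}$.
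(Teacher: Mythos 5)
Your proposal is correct, and its skeleton --- flow-invariance of the odd sector, strict positivity of the linearized operator on $L^2_{\mathrm{odd}}(\T_L)$, upgrade to coercivity in the $H^1\times L^2$ norm, and a Lyapunov argument from energy conservation plus Taylor expansion --- is exactly the paper's (Lemma \ref{lem:oddspec}, Lemma \ref{coercivity}, the bound \eqref{matrix_coerc}, and Theorem \ref{thm:staodd}). Where you genuinely depart is in how positivity on the odd sector is obtained. The paper gets it almost immediately: by Proposition \ref{prop:linearspec} the scalar operator $\mathcal{L}_0$ has exactly one negative eigenvalue and a simple kernel, both eigenfunctions ($Y_0$ and $S'$) are even, hence on odd functions the quadratic form is bounded below by $\sigma_2>0$, with no further structure needed. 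You instead exploit a fact the paper never uses, namely that the orbiting profile satisfies $\varphi(x+\tfrac{L}{2})=-\varphi(x)$, so that $V_n''(\varphi)$ has period $\tfrac{L}{2}$, and you run a four-sector parity/half-shift decomposition with Hill band interlacing, reducing the whole problem to the openness of the first antiperiodic gap $\nu_1^A>\nu_0^A=0$. This reduction is sound, and your proposed source for that gap is in substance the paper's own mechanism: Lemma \ref{lem:thetaLbeta} together with Theorem \ref{thm:Neves2} gives $\theta=-\partial_\beta L\neq 0$, so $0$ is a simple $L$-periodic eigenvalue; since a doubly degenerate $\tfrac{L}{2}$-antiperiodic eigenvalue at $0$ would a fortiori be doubly degenerate as an $L$-periodic eigenvalue, the antiperiodic gap is open. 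Thus your route consumes the same two inputs as the paper (the negative-eigenvalue count and the period-monotonicity non-degeneracy from Theorem \ref{thm_monotonicity}) but spends them through a longer chain; it buys a finer localization of the odd perturbations inside the band structure, which the stability proof does not actually need, whereas the paper's parity shortcut avoids interlacing entirely. If you keep your version, two asserted steps still need proofs: the half-period antisymmetry $\varphi(x+\tfrac{L}{2})=-\varphi(x)$ (true, by the $(u,v)\mapsto(-u,-v)$ symmetry of the Hamiltonian system \eqref{system} and ODE uniqueness, but your whole sector decomposition rests on it), and the passage from $L$-periodic simplicity to $\tfrac{L}{2}$-antiperiodic simplicity just described. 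Finally, your bootstrap treatment of the $o\big(\Vert\vec{w}\Vert_X^2\big)$ remainder is the right way to turn the energy comparison in Theorem \ref{thm:staodd} into an honest $\varepsilon$--$\delta$ statement.
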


Finally, as a by-product of our analysis we are able to extend the main result in \cite{deNa} (given only for cases $n=1,2$, see Section \ref{ext_nata} below for more details), for equation \eqref{phi_w} below, to all $n\in\N$.
\begin{thm}[Orbital instability of traveling waves in \cite{deNa}]\label{MT3}
Let $n\in\N$ be arbitrary but fixed. Then, traveling wave solutions ($c\in(-1,1)$) found in \cite{deNa} orbiting around the origin in the corresponding phase-portrait associated to equation \eqref{phi_w} are orbitally unstable in the energy space.
\end{thm}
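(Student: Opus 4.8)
The plan is to recognise the traveling-wave profiles $\phi_c$ of \eqref{phi_w} constructed in \cite{deNa} as members of the very same structural class of ``orbits around the origin'' treated in Theorem \ref{MT1}, so that the Grillakis--Shatah--Strauss framework we have already assembled transfers with only cosmetic changes. First I would record the precise form of \eqref{phi_w} and of the profiles found in \cite{deNa}, and verify the two qualitative features on which the whole scheme rests: that the origin of the associated phase portrait is a center, so that $\phi_c$ oscillates about $\phi=0$ and crosses it exactly twice per period, and that the $\phi_c$ assemble into a smooth curve $c\mapsto\phi_c$. This last point is the exact analogue of the existence theorem for the smooth curve $c\mapsto\phi_c$, and I expect its proof to be a transcription of the implicit-function/ODE argument used for \eqref{phif}, since it depends only on the behaviour of the potential near the origin.

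Next I would reproduce the Hamiltonian set-up verbatim: write \eqref{phi_w} as a first-order system, identify the conserved energy $\mathcal{E}$ and momentum $\mathcal{P}$, observe that $\vec\varphi_c=(\phi_c,-c\phi_c')$ is a critical point of $\mathcal{E}+c\mathcal{P}$, and form the Hessian $\mathcal{H}_c=\mathcal{E}''(\vec\varphi_c)+c\mathcal{P}''(\vec\varphi_c)$. Since the $(2,2)$ block of $\mathcal{H}_c$ is the identity, a Schur-complement (Haynsworth inertia) argument gives $n(\mathcal{H}_c)=n(\mathcal{L}_c)$, where $\mathcal{L}_c=-(1-c^2)\partial_x^2+q_c$ is the scalar Hill operator obtained by linearising the profile equation of \eqref{phi_w} about $\phi_c$ and $q_c$ is the periodic potential built from $\phi_c$. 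The spectral question thus reduces to counting the negative periodic eigenvalues of $\mathcal{L}_c$.

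The decisive computation is then the same as in Theorem \ref{MT1}. Because $\phi_c$ orbits the origin, its translation mode $\phi_c'$ lies in the kernel of $\mathcal{L}_c$ and has exactly two zeros per period; normalising $\phi_c$ to be odd makes $q_c$ even, so that $\phi_c'$ is the even eigenfunction of $\mathcal{L}_c$ carrying two nodes, and the interlacing of the even and odd periodic spectra forces exactly one even and one odd eigenvalue to lie strictly below $0$. Hence $n(\mathcal{L}_c)=2$. Combining this with the sign of $d''(c)$, where $d(c):=(\mathcal{E}+c\mathcal{P})(\vec\varphi_c)$, the Grillakis--Shatah--Strauss instability index is odd, which yields the claimed orbital instability for every $c\in(-1,1)$.

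The main obstacle is, as in Theorem \ref{MT1}, that for $n>2$ there is no closed form for $\phi_c$: neither the nodal pattern of $\phi_c'$ nor the sign of $d''(c)$ can be read off an explicit expression. I would resolve this exactly as before, using the ODE characterisation of the periodic orbit together with the combinatorial estimates on the potential to certify that the profiles have the right oscillation count and that $c\mapsto\mathcal{P}(\vec\varphi_c)$ enjoys the monotonicity needed to fix $\operatorname{sgn} d''(c)$. The only genuinely new point is a preliminary verification that the potential of \eqref{phi_w} shares the qualitative features of $V_n$ in \eqref{potential} --- evenness, a center at the origin, and the multi-well shape --- that make these arguments run; once this is in place, the conclusion of Theorem \ref{MT3} follows immediately from the GSS criterion.
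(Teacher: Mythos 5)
Your overall scaffolding (GSS reduction, scalar Hill operator, sign of $d''$, Schur complement for the matrix Hessian) is the right one, but the spectral count at the heart of your argument is wrong, and it is precisely the point where the real work lies. You claim that oddness of $\varphi_c$ plus ``interlacing of the even and odd periodic spectra'' forces $n(\mathcal{L}_c)=2$. For a Hill operator whose kernel element $\varphi_c'$ has two zeros per period, oscillation theory only gives $0\in\{\sigma_1,\sigma_2\}$; deciding between these two cases is exactly the delicate issue, and no parity or interlacing argument settles it. The paper settles it with Floquet-theoretic results of Neves (Theorems \ref{thm:Neves1} and \ref{thm:Neves2}): the constant $\theta$ attached to $\varphi_c'$ satisfies $\theta=-\frac{dL}{d\beta}$, and the monotonicity $\frac{dL}{d\beta}>0$ then gives $\theta<0$, hence $0=\sigma_1$ is simple and $n(\mathcal{L}_c)=1$, not $2$. (This is also where the paper corrects a sign error in \cite{deNa}, where the relation is stated as $\theta=+\frac{dL}{d\beta}$; without that correction the extension cannot be run consistently.) Your count also breaks your own conclusion: the actual computation, obtained by differentiating the energy-level relation $\tfrac12\varphi_x^2-\tfrac1\omega\widetilde{V}_n(\varphi)=\beta$ in $c$ at fixed $L$ and using $\frac{d\widetilde L}{d\widetilde\beta}>0$ to get $\frac{d\beta}{dc}-\frac{2c}{\omega}\beta>0$, yields $d''(c)<0$. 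With your $n(\mathcal{H}_c)=2$ this makes the GSS index $2-p(d'')=2$, which is \emph{even}, so the instability criterion you invoke does not apply; with the correct count $n(\mathcal{H}_c)=1$ the index is $1$ and instability follows.

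There is a second mismatch in your plan for the quantitative steps. You propose to certify the period monotonicity and the sign of $d''$ ``using the combinatorial estimates on the potential'' from the $\phi^{4n}$ analysis. Those estimates are tailored to the product structure of $V_n$ in \eqref{potential} and do not transcribe to $\widetilde{V}_n(x)=-\tfrac{x^2}{2}+\tfrac{x^{2n+2}}{2(n+1)}$, which is not a product of quadratic factors and, contrary to your description, is not multi-well: it has exactly three critical points for every $n$. Fortunately nothing heavy is needed here: for $\widetilde{V}_n$ the Chicone criterion reduces to an explicit rational-function computation,
\begin{equation*}
-\dfrac{d^2}{dx^2}\dfrac{\widetilde{V}_{n}(x)}{(\widetilde{V}_{n}'(x))^2}
=\frac{nx^{2n-2}\left(4n^{2}-1+2(1+n+4n^{2})x^{2n}-(1+2n)x^{4n}\right)}{(1+n)(-1+x^{2n})^{4}}\geq 0
\quad\text{for } |x|<1,
\end{equation*}
which is a one-line polynomial inequality (Lemma \ref{lem:monotonicity2}); the ability to do this in closed form for all $n$ is exactly why the paper can extend \cite{deNa} beyond $n=2$, where the original authors had to resort to numerics. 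So the correct route is: prove $\frac{dL}{d\beta}>0$ by this elementary computation, feed it into Neves' $\theta$ to get $n(\mathcal{L}_c)=1$ (after fixing the sign conventions of \cite{deNa}), derive $d''(c)<0$ as in Theorem \ref{thm:instab}, and then apply GSS with index $1$.
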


\begin{rem}
We emphasize that the previous theorems are independent of the results in \cite{Pa} and have been proven by different techniques.
\end{rem}

\begin{rem}
As an important observation we point out that Theorem \ref{MT2} is motivated by the fact that the oddness character of the initial data is preserved by the periodic flow associated to equation \eqref{phif}. In other words, if $\vec{\phi}_0=(\phi_{0,1},\phi_{0,2})=(\mathrm{odd},\mathrm{odd})$, then so is the solution for all times. Then, noticing that, under the additional requirement $\phi(x=0)=0$, the solution orbiting around zero in the corresponding phase-portrait correspond to an odd function. Thus, in the case $c=0$, the associated solution corresponds to an $(\mathrm{odd},\mathrm{odd})$ vector, and hence, under the assumptions of the previous theorem, the solution associated to this kind of initial perturbation shall always remain odd.  Here, and for the rest of this paper, when we refer to an \emph{odd} function, we mean that it is odd regarded as a function in the whole line.
\end{rem}

\begin{rem}
We point out that, since equation \eqref{phif} (equation \eqref{phi_w} for Theorem \ref{MT3}) is also invariant under the maps: \[
u(t,x)\mapsto u(-t,x),\quad u(t,x)\mapsto -u(t,x) \quad \hbox{and} \quad u(t,x)\mapsto -u(-t,x),
\]
we also deduce Theorems \ref{MT1}, \ref{MT2} and \ref{MT3} for both traveling and \emph{anti-traveling}\footnote{The solution with a minus sign in front (which is also a solution).} wave solutions, moving to the left or right respectively.
\end{rem}

\subsection{Organization of this paper} 

This paper is organized as follow. In Section \ref{existence} we prove the existence of a smooth curve of traveling waves solutions, and show that, under some conditions on the size of the period, we are able to consider standing waves solutions too. In Section \ref{sec:SpecAna} we provide the main spectral information of the linear operators needed in the stability analysis. Then, in Section \ref{stab_standing} we use the spectral information to conclude stability of standing waves under odd perturbations. In Section \ref{sec:Instability} we show the instability of traveling waves in the whole energy space. Finally, in Section \ref{ext_nata} we extend the analysis in \cite{deNa} to traveling waves solutions.

\medskip

\section{Existence of smooth curves periodic solutions}\label{existence}

In this section we seek to establish the existence of smooth curves of periodic traveling wave solutions to equation \eqref{phif} associated to subluminal waves, that is, with speed $c\in(-1,1)$. More precisely, in this section we look for solutions of the form $\phi(t,x)=\phi_c(x-ct)$. Before going further notice that, with no loss of generality, from now on we can assume\footnote{If not, we use the transformation $(t,x)\mapsto(\lambda_n^{1/2}t,\lambda_n^{1/2}x)$ what fixes $\lambda_n=1$. To fix $v^2=1$ it is enough to re-scale $\phi$ by defining the change of variables $\varphi(t,x)=v\phi\big(v^{2n-1}t,v^{2n-1}x\big)$.} $\lambda_n=v^2=1$. Thus, plugging $\phi_c(x-ct)$ into the equation, we obtain that if $\phi(t,x)$ is a traveling wave solution, then $\phi_c$ must satisfy: \begin{align}\label{solit_eq}
(c^2-1)\phi_c''=-V'_{n}(\phi_c).
\end{align}
On the other hand, the question regarding the existence of periodic solutions for the latter equation can be rewritten in terms of the following (autonomous) Hamiltonian system:
\begin{align}\label{system}
\begin{cases}\dot{u}=v,
\\ \dot{v}=\tfrac{1}{\omega}V_{n}'(u),
\end{cases}
\end{align}
where $\omega:=1-c^2$. From the explicit form of $V_{n}$ in \eqref{potential} it follows that the previous system has exactly $4n-1$ critical points. In fact, first of all, since $V_n'$ is a $(4n-1)$-th degree polynomial (see \eqref{comp_v_x}), it follows that it can have at most $4n-1$ real roots. Now, from direct computations, recalling the explicit form of $V_n$ in \eqref{potential}, we infer that zero is a simple real root\footnote{From the explicit form of $V_n$ it immediately follows that $V_n'$ has a factor $x$ multiplying the whole expression.} of $V_n'$. Besides, it is not hard to see that each root associated to each individual factor in the definition of $V_n$ is also a simple root\footnote{Since each individual factor in $V_n$ is of the form $(x^2-a^2)^2$, its derivative still contains a factor $(x^2-a^2)$. Therefore, $x=\pm a$ is still a root of $V_n'$.} of $V_n'$. Summarizing, we have found $2n+1$ roots of $V_n'$, which are precisely located at \begin{align}\label{critical_points}
(u_{-k},v_{-k}):=\big(-k+\tfrac{1}{2},0\big), \quad  (u_0,v_0):=(0,0), \quad  (u_k,v_k):=\big(k-\tfrac{1}{2},0\big),
\end{align}
where $k=1,...,n$. Even more,   the remaining $2n-2$ critical points are located in between each consecutive pair\footnote{This follows, for example, from Rolle Theorem applied to $f(x)=V_n(x)$, since all of these critical points in \eqref{critical_points} (except for $x=0$) are also roots of $V_n$. Thus, $f'(x)$ must to have at least one root in between each pair.} in \eqref{critical_points} for $\pm k=1,...,n$. More specifically, for each $k\in\{1,..,n\}$, we have exactly one critical point in between $(u_k,v_k)$ and $(u_{k+1},v_{k+1})$ (and their corresponding reflections, that is, in between each pair $(u_{-k-1},v_{-k-1})$ and $(u_{-k},v_{-k})$). Since we already have found $4n-1$ roots, there cannot be any other missing root for $V_n'$. Hence, the two nearest critical points to $(0,0)$ are $(u_{\pm1},v_{\pm 1})$ given in \eqref{critical_points}.  Moreover, by standard computations we see that the linearized matrix around each of these points takes the form \begin{align}\label{form_linear_matrix}
M:=\dfrac{1}{\omega}\left(\begin{matrix}
0 & \omega 
\\ V_{n}'' & 0
\end{matrix}\right).
\end{align}
Furthermore, from direct computations it follows that, for all $n\in\N$ we have (see \eqref{vnpp} below): \[
V_n''(0)=-4\prod_{k=1}^n\big(k-\tfrac{1}{2}\big)^4\sum_{k=1}^n\big(k-\tfrac{1}{2}\big)^{-2}<0.
\]
Thus, for $c\in(-1,1)$ or equivalently for $\omega>0$, from the latter inequality, and recalling identity  \eqref{form_linear_matrix}, it follows that $(0,0)$ is a stable center point for all $n\in\N$. Even more, from similar computations it is not hard to see that $V_{n}''\big(\tfrac{1}{2}\big)>0$, and hence, $(u_{\pm1},v_{\pm1})$ are both saddle critical points, for all $n\in\N$. 

\medskip

On the other hand, recalling that the previous system is Hamiltonian, and setting $(0,0)$ as the zero energy level, we obtain that the Hamiltonian associated to \eqref{system} is given by \begin{align}\label{hamilt}
\mathcal{H}(u,v):=\dfrac{1}{2}v^2-\dfrac{1}{\omega}\big(V_{n}(u)-V_{n}(0)\big).
\end{align}
Therefore, by the standard ODE theory for Hamiltonian equations (see for example \cite{ChiL}), we know that all periodic solutions of \eqref{system} orbiting around $(0,0)$ corresponds to regular level sets of $\mathcal{H}$ given by \begin{align}\label{gamma}
\Gamma_\beta:=\big\{(u,v): \ \mathcal{H}(u,v)=\beta\big\},
\end{align}
with $\beta\in(0,E_\star)$, where the maximal energy level $E_\star$ is given by \[
E_\star:=-\dfrac{1}{\omega}\big(V(\tfrac{1}{2})-V(0)\big)=\dfrac{1}{\omega}\prod_{k=1}^n\big(k-\tfrac{1}{2}\big)^4.
\]
\begin{figure}[h!]
   \centering
   \includegraphics[scale=0.26]{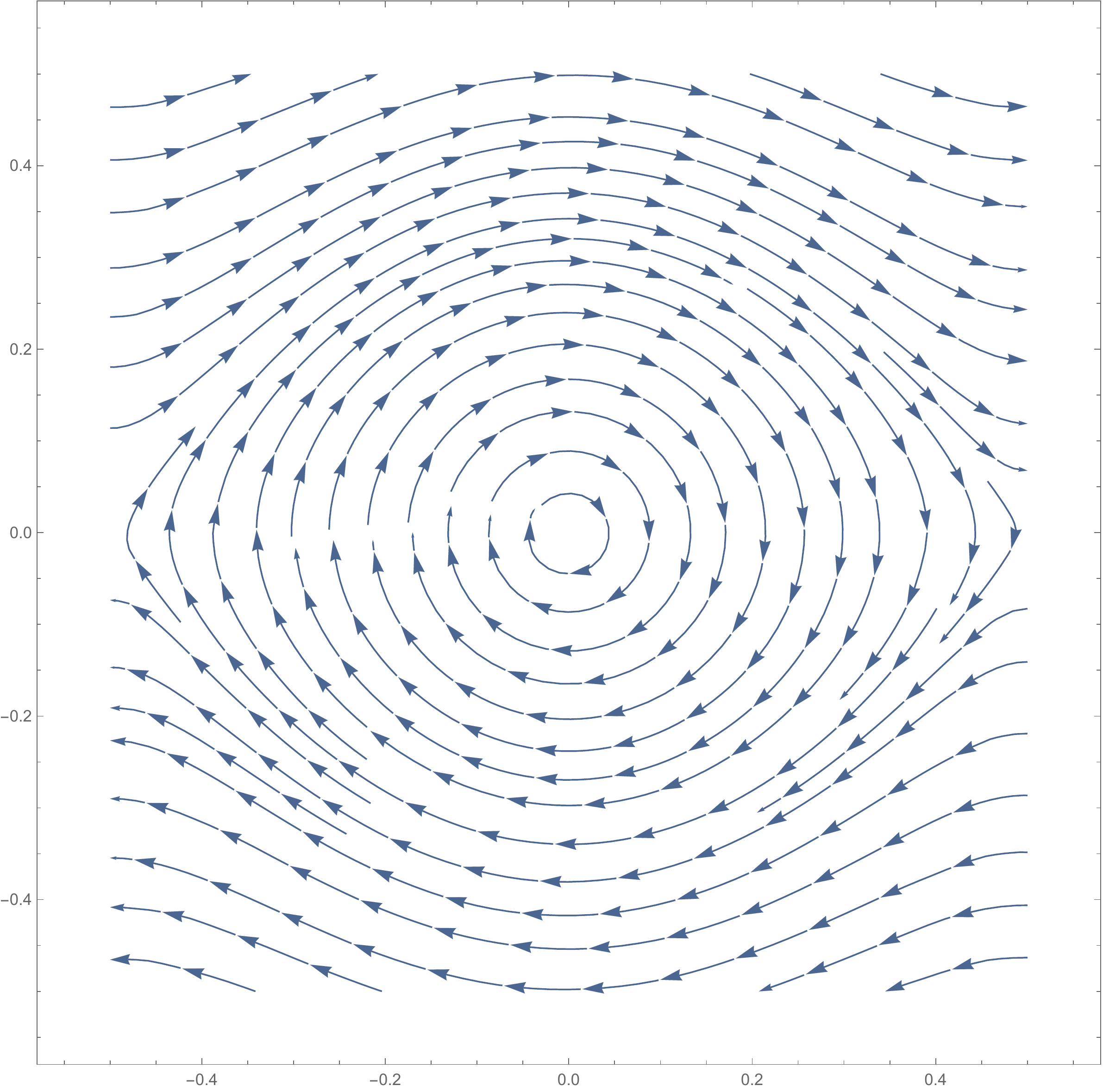} \qquad
   \includegraphics[scale=0.26]{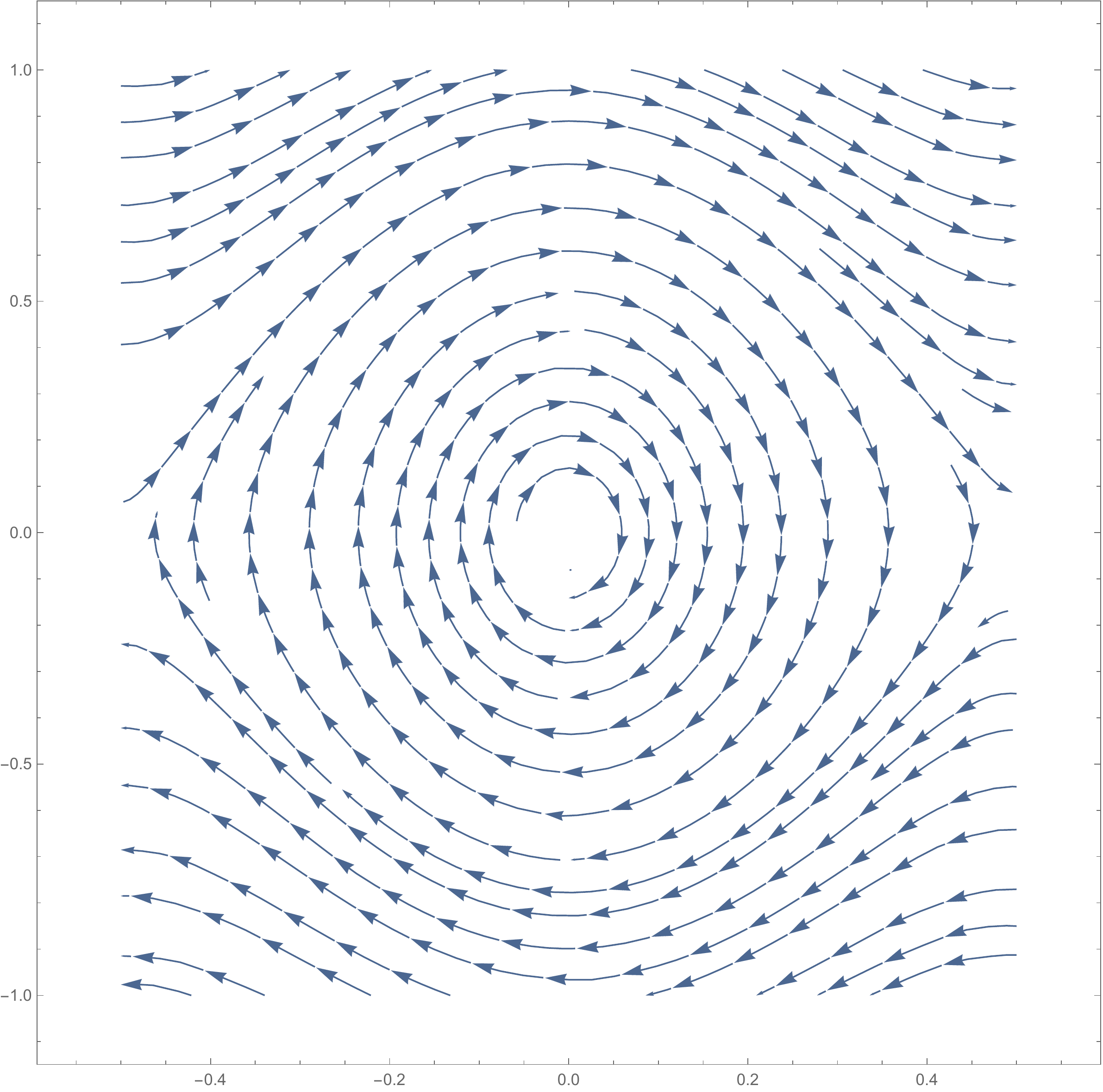}%%{DtWKmenosQ.pdf}  % requires the graphicx package
   \caption{Phase portrait of the Hamiltonian system \eqref{system} around $(0,0)$ for the first two cases $n=1,2$. On the left we have the phase portrait associated to the $\phi^4$-model while on the right the one associated to $\phi^8$.}\label{fig2}
\end{figure}
Now, with the additional constraint $\phi_c(0)=0$, from the symmetry of these level sets it follows that all solutions associated to these periodic orbits are $\mathrm{odd}$ (other solutions are translations of the same function, and consequently, not necessarily odd). Finally, by using again that each solution is a level curve of $\mathcal{H}$ and the symmetry of the phase portrait, it follows from \eqref{hamilt}-\eqref{gamma} that, for every $\beta\in(0,E_\star)$, the period of the corresponding odd solution satisfies
\begin{equation}
L=\sqrt{2}\int_{x_0}^{x_1}\dfrac{dx}{\sqrt{\beta+\frac{1}{\omega}(V_n(x)-V_n(0))}},\label{eq:peri}
\end{equation}
where $x_0$ and $x_1$ are the left and right intersections of the curve given by $\tfrac{1}{2}v^2-\tfrac{1}{\omega}\big(V_n(u)-V(0)\big)=\beta$ with the $u$-axis. We point out that the upper integration limit $x_1$ can also be written as the solution of $V(x)=V(0)-\omega\beta$ for $x\in(0,\tfrac{1}{2})$, and $x_0=-x_1$ (note that there is only one solution in this interval). Moreover, from the equation for $x_1$ we also infer that when $\beta$ goes to zero (or $c$ goes to $1$ for fixed $\beta$), $x_1$ goes to zero too. It is worth noting that the period $L$ defines a convergent improper integral for all values of $\beta\in(0,E_\star)$. Furthermore, notice that \[
\lim_{\beta\to E_\star^-}L(\beta)=+\infty.
\]
On the other hand, when $\beta\to 0^+$ we have\footnote{If the reader prefers, the existence of this limit can be rigorously justify by defining it (the limit) after the proof of the monotonicity of the period. Notice that the period is trivially bounded from below by $0$ and decreases as $\beta\to0^+$ (see the proof of Theorem \ref{thm_monotonicity} below). Hence, $L(\beta)$ has a limit as $\beta\to0^+$.}: \[
\lim_{\beta\to0^+}L(\beta)=\sqrt{2\omega}\lim_{\beta\to0^+}\int_{x_0(\beta)}^{x_1(\beta)}\dfrac{dx}{\sqrt{\omega\beta+V_n(x)-V_n(0)}}=:\sqrt{\omega}\delta_n,
\]
where $\delta_n\in[0,\infty)$ does not depends on $\omega$. The following theorem ensures us that, once we fix the period $L\in(0,\infty)$, the previous method produces a non-trivial smooth curve of periodic traveling wave solutions that can be parameterized by their speeds.

\begin{thm}[Smooth curve of periodic solutions]\label{thm_monotonicity}
Consider $n\in\N$ and let $L>0$ be arbitrary but fixed. Then, for any speed $c$ satisfying \[
c\in(-1,1) \quad \hbox{such that}\quad L>\sqrt{\omega}\delta_n,
\] 
there exists an unique energy level $\beta=\beta(c)\in(0,E_\star)$ such that the periodic wave solution $\vec{\phi}(x-ct)$ to the $\phi^{4n}$-equation \eqref{phif} constructed above has fundamental period $L$. Furthermore, the map $c\mapsto \phi_c(t=0,x)\in H^1(\T_L)$ is smooth.
\end{thm}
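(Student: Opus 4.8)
The plan is to collapse the two-parameter problem (in $\beta$ and $c$) into the invertibility of a single-variable period function, and then to feed in the two limiting values recorded just before the statement. First I would make the dependence on $\omega=1-c^2$ explicit: setting $\tilde\beta:=\omega\beta$ in \eqref{eq:peri} and observing that both the turning point $x_1$ (which solves $V_n(x_1)=V_n(0)-\tilde\beta$) and the saddle threshold $\omega E_\star=\prod_{k=1}^n(k-\tfrac12)^4=:E^0_\star$ depend on $\tilde\beta$ alone, one obtains the factorization $L(\beta,\omega)=\sqrt{\omega}\,\ell(\omega\beta)$, where $\ell(\tilde\beta):=\sqrt2\int_{x_0}^{x_1}\big(\tilde\beta+V_n(x)-V_n(0)\big)^{-1/2}\,dx$ is a function of $\tilde\beta\in(0,E^0_\star)$ only. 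The equation $L(\beta,\omega)=L$ then reads $\ell(\tilde\beta)=L/\sqrt{\omega}$, and the limits already computed become $\ell(0^+)=\delta_n$ and $\ell(\tilde\beta)\to+\infty$ as $\tilde\beta\to(E^0_\star)^-$. In these terms the hypothesis $L>\sqrt{\omega}\,\delta_n$ is exactly $L/\sqrt{\omega}\in(\delta_n,\infty)$, i.e.\ it places the target value inside the range of $\ell$.

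Everything then rests on proving that $\ell$ is \emph{strictly increasing} on $(0,E^0_\star)$, which I would establish via the classical time-map transformation. On $(0,\tfrac12)$ the potential $W:=V_n(0)-V_n$ is smooth, vanishes at $0$, and is strictly increasing, since there are no critical points of $V_n$ strictly between the center and the first saddle; hence its inverse $Q:=(W|_{(0,1/2)})^{-1}$ is smooth on $(0,E^0_\star)$. Using the evenness of $V_n$ together with the substitutions $y=W(x)$ and then $y=\tilde\beta t$, one rewrites $\ell(\tilde\beta)=2\sqrt2\,\sqrt{\tilde\beta}\int_0^1 Q'(\tilde\beta t)(1-t)^{-1/2}\,dt$, which is manifestly smooth in $\tilde\beta$ and whose derivative reduces, after collecting terms, to the pointwise sufficient condition $\frac{d}{dy}\big(\sqrt{y}\,Q'(y)\big)>0$. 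Unwinding $y=W(x)$, this is equivalent to the concavity of $x\mapsto\sqrt{W(x)}$ on $(0,\tfrac12)$, i.e.\ to the polynomial inequality
\[
\big(V_n'(x)\big)^2+2\big(V_n(0)-V_n(x)\big)V_n''(x)>0,\qquad x\in(0,\tfrac12).
\]
Verifying this for every $n\in\N$ is the main obstacle. The difficulty is twofold: the difference $V_n(0)-V_n$ does not inherit the product structure of $V_n$ (so the logarithmic-derivative identity that makes $(V_n')^2-V_nV_n''$ positive does not apply directly), and at the center the left-hand side vanishes while its leading quadratic contributions cancel, so one must control the sign of the $x^4$-coefficient. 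I would treat the two ends separately: near $x=\tfrac12$ the inequality is immediate since the first term dominates while $V_n(0)-V_n\to V_n(0)>0$ and $V_n''(\tfrac12)>0$; on the bulk of the interval and near the origin I expect to expand $V_n,V_n',V_n''$ through the product \eqref{potential} and reduce the statement to an inequality among the symmetric sums of the $(k-\tfrac12)^{-2}$, which should be the combinatorial heart of the argument.

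Granting the monotonicity of $\ell$, existence and uniqueness of $\beta(c)$ follow at once: $\ell$ is a continuous strictly increasing bijection of $(0,E^0_\star)$ onto $(\delta_n,\infty)$, so for each admissible speed (those $c$ with $L/\sqrt{\omega}>\delta_n$) the equation $\ell(\omega\beta)=L/\sqrt{\omega}$ has a unique root $\tilde\beta(c)\in(0,E^0_\star)$, whence $\beta(c)=\tilde\beta(c)/\omega\in(0,E_\star)$ is the unique energy level producing fundamental period $L$.

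Finally, for the smoothness of $c\mapsto\phi_c$ I would apply the implicit function theorem to $F(c,\beta):=L(\beta,1-c^2)-L$. The map $F$ is smooth in $(c,\beta)$ because the Abel representation above exhibits the period as a smooth function of its parameters away from the separatrix, and $\partial_\beta F=\partial_\beta L=\sqrt{\omega}\,\omega\,\ell'(\omega\beta)>0$ never vanishes thanks to the monotonicity just established. Hence $c\mapsto\beta(c)$ is smooth. Since $\phi_c$ is obtained by integrating the Hamiltonian system \eqref{system} along the level set $\Gamma_{\beta(c)}$ from the initial condition $\phi_c(0)=0$, the smooth dependence of solutions of ODEs on parameters and initial data promotes this to smoothness of $c\mapsto\phi_c(t=0,\cdot)$ as a curve in $H^1(\T_L)$, and in fact in $H^\infty(\T_L)$ since the profiles are smooth.
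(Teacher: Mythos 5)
Your overall architecture is sound, and your reduction is genuinely different from the paper's. Instead of Chicone's criterion (convexity of $-(V_n-V_n(0))/(V_n')^2$, a third-order condition involving $V_n'''$), you use the time-map representation $\ell(\tilde\beta)=2\sqrt{2}\,\sqrt{\tilde\beta}\int_0^1 Q'(\tilde\beta t)(1-t)^{-1/2}\,dt$ and correctly reduce strict monotonicity of the period to strict concavity of $\sqrt{W}$, $W:=V_n(0)-V_n$, i.e.\ to the second-order inequality
\[
\big(V_n'(x)\big)^2+2\big(V_n(0)-V_n(x)\big)V_n''(x)>0,\qquad x\in(0,\tfrac12).
\]
This condition is in fact weaker than the paper's: since $W/(W')^2$ is even with vanishing derivative at the origin, Chicone's convexity yields your inequality upon integrating from $0$. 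So the route is viable, and your factorization $L=\sqrt{\omega}\,\ell(\omega\beta)$, the intermediate-value argument for existence/uniqueness of $\beta(c)$, and the implicit-function-theorem plus smooth-dependence argument for smoothness of $c\mapsto\phi_c$ are all correct and essentially parallel to what the paper does once monotonicity is in hand.

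The genuine gap is that you never prove the displayed inequality, and it is precisely the hard part of the theorem --- the reason the paper's Section \ref{existence} consists of Lemma \ref{mon_lem_1}, Lemma \ref{mon_lem_2}, the lemma containing \eqref{improimpro}, Proposition \ref{mon_prop_1}, and a delicate limit analysis at $x=0$. Your endpoint checks do not carry the weight you assign them. At $x=\tfrac12$ the first term does not ``dominate'': $V_n'(\tfrac12)=0$, and positivity there comes entirely from $2V_n(0)V_n''(\tfrac12)>0$. Near $x=0$ the leading quadratic contributions cancel exactly and, writing $W=ax^2+bx^4+O(x^6)$, your left-hand side equals $-12ab\,x^4+O(x^6)$, so already the local statement requires the sign of the quartic coefficient of $W$, i.e.\ a combinatorial computation on the product \eqref{potential} of exactly the type you postpone. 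On the bulk of $(0,\tfrac12)$ nothing is offered at all: in the paper's notation your inequality reads
\[
16x^2\Pi_n^2\Sigma_{n-1}^2+2\big(\Pi_{n,0}^2-\Pi_n^2\big)\big(4\Pi_n\Sigma_{n-1}+8x^2\Sigma_{n-1}^2+16x^2\Pi_n\Sigma_{n-2}\big)>0,
\]
and the addend $8\big(\Pi_{n,0}^2-\Pi_n^2\big)\Pi_n\Sigma_{n-1}$ is strictly negative on all of $(0,\tfrac12)$ (since $\Pi_n\Sigma_{n-1}<0$ there, while $\Pi_{n,0}^2-\Pi_n^2>0$), so a cancellation argument against the positive terms, uniform in $n$, is unavoidable --- this is the same kind of symmetric-sum bookkeeping in the quantities $(k-\tfrac12)^{-2}$ that occupies the paper for several pages. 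Until that combinatorial core is supplied, what you have is an elegant (and possibly simpler-to-close, being lower order) reduction of the theorem, not a proof of it.
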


\begin{rem}
Notice that, by choosing $L>\delta_n$ we are able to consider standing waves solutions. These standing waves are related (in some sense) to the odd Kink solution of the $\phi^{4n}$-model. Additionally, when $c=0$, the corresponding solution is $(\phi_1,\phi_2)=(\mathrm{odd},\mathrm{odd})$, while when $c\neq 0$, the solution is $(\phi_1,\phi_2)=(\mathrm{odd},\mathrm{even})$, property that is not preserved by the flow.
\end{rem}

The main theorem in \cite{Chi} ensures that, under our current notations, if $-(V_n(x)-V_n(0))/(V_n'(x))^2$ is strictly convex for $x\in(-\tfrac{1}{2},\tfrac{1}{2})$, then the period $L=L(\beta)$ defines a strictly increasing function of $\beta$. Besides, notice that, by showing the strict monotonicity of $L$ with respect to the energy level $\beta$, the proof of the theorem follows. Thus, in order to conclude the proof of the theorem, it is enough to study the sign of the following function:
\[
-\dfrac{d^2}{dx^2}\dfrac{V_{n}(x)-V_n(0)}{(V_{n}'(x))^2}=\dfrac{3V_{n}''\big( (V_{n}')^2-2(V_{n}-V_n(0))V_{n}''\big)+2(V_{n}-V_n(0))V_{n}'V_{n}'''}{(V_{n}')^4}.
\]
Then, our first goal is to show the non-negativity of the latter quantity. Since the denominator is always non-negative, for this first step it is enough to show that \begin{align*}%\label{red_convexity}
\mathcal{V}_n(x):=3V_{n}''(x)\big( (V_{n}'(x))^2-2(V_{n}(x)-V_n(0))V_{n}''(x)\big)+2(V_{n}(x)-V_n(0))V_{n}'(x)V_{n}'''(x)\geq 0.
\end{align*}
In order to show that the latter inequality holds, we start by doing several basic computations needed in our analysis. First of all, by directly differentiating $V_n$ we have
\begin{align}\label{comp_v_x}
V_{n}'=4x\prod_{k=1}^n\left(x^2-\big(k-\tfrac{1}{2}\big)^2\right)\sum_{P\in\mathcal{P}_{n-1}^n}\prod_{i\in P}\left(x^2-\big(i-\tfrac{1}{2}\big)^2\right),
\end{align}
where $\mathcal{P}_{m}^n$ denotes the set\footnote{We call $m$-combination of a set $E$ to any subset of $m$ different elements $E$. For example, \[\mathcal{P}_2^4=\big\{\{1,2\},\{1,3\},\{1,4\},\{2,3\},\{2,4\},\{3,4\}\big\}.\]} of $m$-combinations of $\{1,...,n\}$ without repetitions and no permutations allowed. In particular, each $P\in\mathcal{P}_{n-1}^n$ is a set of $(n-1)$ elements. For the sake of clarity, let us introduce some notation that shall be useful in the sequel.  From now on we shall denote by $\Pi_{n}$, $\Pi_{n,0}$ and $\Sigma_{i}$ the following quantities\footnote{By convention $\Sigma_0=1$ and $\Sigma_m=0$ for $m<0$.}
\[
\Pi_n:=\prod_{k=1}^n\left(x^2-\big(k-\tfrac{1}{2}\big)^2\right), \ \ \Pi_{n,0}:=\prod_{k=1}^n\big(k-\tfrac{1}{2}\big)^2 \ \ \hbox{and}\ \  \Sigma_i:=\sum_{P\in\mathcal{P}_{i}^n}\prod_{j\in P}\left(x^2-\big(j-\tfrac{1}{2}\big)^2\right).
\]
Hence, by taking advantages of the previous notations we can write, for example, $
V_n'=4x\Pi_n\Sigma_{n-1}$. Then, performing similar direct computations and taking advantage of the previous notations, we are able to express $V_n''$ and $V_n'''$ as:
\begin{align}
V_n''&=4\Pi_n\Sigma_{n-1}+8x^2\Sigma_{n-1}^2+16x^2\Pi_n\Sigma_{n-2}, \label{vnpp}
\\ V_n'''&=24x\Sigma_{n-1}^2+48x\Pi_n\Sigma_{n-2}+96x^3\Sigma_{n-1}\Sigma_{n-2}+96x^3\Pi_n\Sigma_{n-3}.\nonumber
\end{align}
Therefore, gathering the identities above and performing some extra direct computations we obtain $\tfrac{1}{96}\mathcal{V}_n =\mathbf{A}+\mathbf{B}x^2+\mathbf{C}x^4$, where \begin{align*}
\mathbf{A}&:=-\big(\Pi_n^2-\Pi_{n,0}^2\big)\Pi_n^2\Sigma_{n-1}^2,
\\ \mathbf{B}&:=2\Pi_{n,0}^2\Pi_n\Sigma_{n-1}^3-4(\Pi_n^2-\Pi_{n,0}^2)\Pi_n^2\Sigma_{n-1}\Sigma_{n-2},
\\ \mathbf{C}&:=4\Pi_{n,0}^2\Sigma_{n-1}^4
+8\Pi_{n,0}^2\Pi_n\Sigma_{n-1}^2\Sigma_{n-2}+8(\Pi_n^2-\Pi_{n,0}^2)\Pi_n^2\Sigma_{n-1}\Sigma_{n-3}
\\ & \quad \ \, -16(\Pi_n^2-\Pi_{n,0}^2)\Pi_n^2\Sigma_{n-2}^2.
\end{align*}
Now, for the sake of clarity we split the analysis into several  small lemmas. Moreover, since the case $n=1$ was already treated in \cite{Pa}, from now on we shall only address the case $n>1$. The following lemma give us the non-negativity of the sum of the second term in $\mathbf{B}$ with the second one in $\mathbf{C}$ (notice that the terms associated to $\mathbf{C}$ in $\mathcal{V}_n$ have an extra $x^2$ with respect to the ones associated to $\mathbf{B}$). 
\begin{lem}\label{mon_lem_1} Let $n\in\N$ with $n\geq 2$. Then, for all $x\in(-\tfrac{1}{2},\tfrac{1}{2})$ we have: \begin{align}
\label{first_prop_ineq}
-4\big(\Pi_n^2-\Pi_{n,0}^2\big)\Pi_n^2\Sigma_{n-1}\Sigma_{n-2}+8x^2\Pi_{n,0}^2\Pi_n\Sigma_{n-1}^2\Sigma_{n-2}\geq0.
\end{align}
\end{lem}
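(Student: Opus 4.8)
The plan is to reduce \eqref{first_prop_ineq} to a single scalar inequality in $t:=x^2\in[0,\tfrac14)$ by exploiting the sign structure of the three building blocks on $x\in(-\tfrac12,\tfrac12)$. First I would observe that for such $x$ every factor $x^2-(k-\tfrac12)^2$ is strictly negative, so writing $a_k:=(k-\tfrac12)^2$, $b_k:=a_k-t>0$, $A_n:=\prod_{k=1}^n a_k=\Pi_{n,0}$, $B_n:=\prod_{k=1}^n b_k$, and $S_i:=e_i(b_1,\dots,b_n)$ (the elementary symmetric polynomials of the $b_k$, so $S_0=1$), one has $\Pi_n=(-1)^nB_n$ and $\Sigma_i=(-1)^iS_i$. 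Substituting these into \eqref{first_prop_ineq} and tracking the signs, the two summands become $4(B_n^2-A_n^2)B_n^2S_{n-1}S_{n-2}$ and $8tA_n^2B_nS_{n-1}^2S_{n-2}$; factoring out the strictly positive quantity $4B_nS_{n-1}S_{n-2}$ (for $n\geq2$ the index $n-2\ge0$, so $S_{n-2}>0$) reduces the claim to
\[
2tA_n^2S_{n-1}\;\geq\;(A_n^2-B_n^2)B_n .
\]

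The geometric obstruction is that the two original summands have opposite signs — the first is $\le0$ since $B_n<A_n$, the second is $\ge0$ — and both vanish at $x=0$, so the whole content of the lemma is that the nonnegative term dominates. The key device I would use is the telescoping identity
\[
A_n-B_n=\prod_{k=1}^n a_k-\prod_{k=1}^n b_k=t\,R,\qquad R:=\sum_{k=1}^n\Big(\prod_{l<k}b_l\Big)\Big(\prod_{l>k}a_l\Big),
\]
valid because $a_k-b_k=t$ for every $k$. With this, $(A_n^2-B_n^2)B_n=tR(A_n+B_n)B_n$, and after dividing by $t>0$ (the case $t=0$ being trivial) and bounding $A_n+B_n\le2A_n$, the target reduces to the clean inequality $B_nR\le A_nS_{n-1}$. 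It is worth noting why the naive estimates fail: replacing $A_n+B_n$ by $2A_n$ and $R$ by the larger $e_{n-1}(a)\ge R$ only yields the upper bound $2tA_n^2\,e_{n-1}(a)$, which \emph{exceeds} the desired $2tA_n^2S_{n-1}$ precisely because $e_{n-1}(a)\ge e_{n-1}(b)=S_{n-1}$. Hence a finer, structure-preserving comparison is needed.

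The heart of the argument — and the step I expect to be the main obstacle — is proving $B_nR\le A_nS_{n-1}$. I would do this by a term-by-term comparison indexed by $k=1,\dots,n$: writing $\prod_j b_j=\prod_{l<k}b_l\,b_k\,\prod_{l>k}b_l$ and $\prod_j a_j=a_k\prod_{l\neq k}a_l$, the $k$-th summand of $B_nR$ equals $b_k\prod_{l<k}b_l^2\prod_{l>k}(a_lb_l)$, while the $k$-th summand of $A_nS_{n-1}$ equals $a_k\prod_{l\neq k}(a_lb_l)$. Their ratio simplifies exactly to $\tfrac{b_k}{a_k}\prod_{l<k}\tfrac{b_l}{a_l}$, which is $\le1$ because $0<b_l\le a_l$ for every $l$. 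Summing over $k$ gives $B_nR\le A_nS_{n-1}$ and closes the proof.

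The only points requiring genuine care are the sign bookkeeping in the substitution step (all signs must be tracked to confirm the reduced scalar inequality), and the degenerate cases, namely $x=0$, where both sides of \eqref{first_prop_ineq} vanish, and $n=2$, where $S_{n-2}=S_0=1$ and the telescoping sum $R$ has only two terms, both of which should be checked directly to confirm the general formulas specialize correctly.
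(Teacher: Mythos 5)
Your proof is correct, and after the shared opening step it takes a genuinely different route from the paper's. Both arguments begin identically: factor the left-hand side of \eqref{first_prop_ineq} by $4\Pi_n\Sigma_{n-1}\Sigma_{n-2}$ (your $4B_nS_{n-1}S_{n-2}$), arriving at the same reduced scalar inequality $2tA_n^2S_{n-1}\ge (A_n^2-B_n^2)B_n$; the difference at this stage is that the paper keeps the signed quantities $\Pi_n,\Sigma_i$ and is therefore forced into a case split on the parity of $n$ (since $\Pi_n\Sigma_{n-1}\Sigma_{n-2}$ has sign $(-1)^{n-1}$), whereas your positive variables $b_k=(k-\tfrac{1}{2})^2-x^2$ eliminate the parity discussion altogether. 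Thereafter the proofs genuinely diverge. The paper proves the reduced inequality by a parity/derivative argument: setting $f(x)=(\Pi_n^2-\Pi_{n,0}^2)\Pi_n-2x^2\Pi_{n,0}^2\Sigma_{n-1}$, it uses that $f$ is even with $f(0)=0$ and shows $f'\ge0$ on $(0,\tfrac{1}{2})$ by multiplying two auxiliary bounds, $-(\Pi_n^2-\Pi_{n,0}^2)\ge 4x^2\Pi_{n,0}^2$ and $-\Sigma_{n-1}\ge\Sigma_{n-2}$, the second of which rests on the identity $\sum_{j=2}^n\big((j-\tfrac{1}{2})^2-\tfrac{1}{4}\big)^{-1}=\tfrac{n-1}{n}<1$. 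You instead dispose of the degeneracy at $x=0$ algebraically, extracting the exact factor $t=x^2$ through the telescoping identity $A_n-B_n=tR$, and close with the termwise comparison $B_nR\le A_nS_{n-1}$; I verified the $k$-th-summand computation and the ratio $\tfrac{b_k}{a_k}\prod_{l<k}\tfrac{b_l}{a_l}\le 1$, and it is exact, including the boundary cases $t=0$ and $n=2$. What each approach buys: yours is more elementary and self-contained (no differentiation, no parity cases, no numerical series) and it exposes the real mechanism, namely that the deficit $A_n-B_n$ carries a full factor of $t$ and that the squared small factors $\prod_{l<k}b_l^2$ in $B_nR$ are dominated term by term by the mixed products $\prod_{l<k}(a_lb_l)$ in $A_nS_{n-1}$; the paper's argument is heavier but stylistically uniform with the rest of its Section 2, where the same kind of numerical-sum estimates and sign bookkeeping recur (cf. \eqref{tech_ident_proof}). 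Your side remark that the naive bounds $R\le e_{n-1}(a)$ and $A_n+B_n\le 2A_n$ alone are too lossy is also accurate, and it correctly motivates why the finer pairing of summands is needed.
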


\begin{proof} In fact, first of all, in order to simplify the proof we start by factorizing the left-hand side of inequality \eqref{first_prop_ineq} by $4\Pi_n\Sigma_{n-1}\Sigma_{n-2}$. Although, notice that, for all $n\in\N$, if we expand all terms involved in $4\Pi_n\Sigma_{n-1}\Sigma_{n-2}$, by using the definition of $\Pi_n$, $\Sigma_{n-1}$ and $\Sigma_{n-2}$ it is not difficult to see  that each addend in the resulting multiplication $4\Pi_n\Sigma_{n-1}\Sigma_{n-2}$ is composed by exactly $3n-3$ factors\footnote{In fact, notice that each addend in $\Sigma_{i}$ is composed exactly by $i$ factors, and that $\Pi_n$ is composed by $n$ more factors. Hence, each addend in the composition $4\Pi_n\Sigma_{n-1}\Sigma_{n-2}$ has exactly $3n-3$ factors.}, each of which is simultaneously negative for all $x\in(-\tfrac{1}{2},\tfrac{1}{2})$. This latter remark comes from the fact that, for all $k\geq 1$, any factor of the form $x^2-(k-\tfrac{1}{2})^2$ is non-positive for all $x\in(-\tfrac{1}{2},\tfrac{1}{2})$. Thus, inequality \eqref{first_prop_ineq} is equivalent to show that, for any $n\in\N$ with $n\geq 2$, and all $x\in(-\tfrac{1}{2},\tfrac{1}{2})$ the following holds:  \begin{align}\label{reduction_first_prop}
-\big(\Pi_n^2-\Pi_{n,0}^2\big)\Pi_n+2x^2\Pi_{n,0}^2 \Sigma_{n-1}\gtreqless0,
\end{align}
where we have to choose the ``$\leq $'' sign in the latter inequality whenever $n$ is even, and the ``$\geq $'' sign otherwise. Of course, the change from ``$\leq$'' to ``$\geq$'' comes from the fact that $3(n-1)$ is even whenever $n$ is odd, and odd whenever $n$ is even. Consequently, if $n$ is even, the function $\Pi_n\Sigma_{n-1}\Sigma_{n-2}$ is non-positive for all $x\in(-\tfrac{1}{2},\tfrac{1}{2})$, while it is non-negative if $n$ is odd.

\medskip

\textbf{Case $n$ even:} In this case we are lead to prove inequality \eqref{reduction_first_prop} with ``$\leq$''-sign. In fact, let us start by defining \[
f(x):=\big(\Pi_n^2-\Pi_{n,0}^2\big)\Pi_n-2x^2\Pi_{n,0}^2\Sigma_{n-1}.
\]
By definition it immediately follows that $f(0)=0$ and that $f(x)$ is an even function. Thus, it is enough to show that \begin{align}\label{fprop_prime}
f'(x)=6x\big(\Pi_n^2-\Pi_{n,0}^2\big)\Sigma_{n-1}-8x^3\Pi_{n,0}^2\Sigma_{n-2}\geq 0, \quad \,\hbox{for all }\, x\in(0,\tfrac{1}{2}).
\end{align}
Now, in order to prove the latter inequality, it is enough to recall the following basic property: If $a,b,c,d\in\R$ are all positive numbers satisfying $$
a\geq c \quad \hbox{and}\quad b\geq d,
$$
then $ab\geq cd$. Then, from the previous analysis we infer that inequality \eqref{reduction_first_prop} follows if we show the following -stronger- result (recall that $n$ is even): \begin{align}\label{improved_ineq}
\hbox{for all }\,x\in(0,\tfrac{1}{2}), \quad -\big(\Pi_n^2-\Pi_{n,0}^2\big)\geq 4x^2\Pi_{n,0}^2 \quad \hbox{and} \quad -\Sigma_{n-1}\geq \Sigma_{n-2} .
\end{align}
Notice that by gathering both inequalities we obtain \eqref{fprop_prime}. Hence, let us start by proving the first of them. In fact, by a direct re-arrangement of terms, it follows that the first inequality in \eqref{improved_ineq} is  equivalent to show that \[
(1-4x^2)\geq \Pi_{n,0}^{-2}\Pi_{n}^2=:\mathrm{R} \quad \hbox{ where } \quad 
\mathrm{R}=\prod_{k=1}^n\big(1-(k-\tfrac{1}{2})^{-2}x^2\big)^2.
\]
Now, on the one-hand, notice that the first factor in $\mathrm{R}$ (that is, the term associated with $k=1$) is given by $(1-4x^2)^2$. On the other hand, for all $k\in\{1,...,n\}$ and all $x\in(0,\tfrac{1}{2})$ we have \[
0< \big(1-(k-\tfrac{1}{2})^{-2}x^2\big)^2< 1.
\]
Thus, by plugging the latter inequality into the definition of $\mathrm{R}$, and by using the explicit form of the factor associated to $k=1$, it immediately follows that \[
\mathrm{R}\leq 1-4x^2.
\]
Now we focus on showing the second inequality in \eqref{improved_ineq}, that is, on showing $-\Sigma_{n-1}\geq \Sigma_{n-2}$. First of all notice that, for $x\in(0,\tfrac{1}{2})$, we can re-write these terms as \begin{align*}
\Sigma_{n-1}&=\Pi_n\cdot\sum_{k=1}^n\big(x^2-(k-\tfrac{1}{2})^2\big)^{-1},
\\ \Sigma_{n-2}&=\Pi_n\cdot\sum_{k=1}^{n-1}\sum_{j=k+1}^n\big(x^2-(k-\tfrac{1}{2})^2\big)^{-1}\big(x^2-(j-\tfrac{1}{2})^2\big)^{-1}.
\end{align*}
For the sake of simplicity, from now on we denote by $\Sigma_i^k$ the $k$-th term associated to $\Sigma_{i}$. More specifically, for the cases of $i=n-1$ and $i=n-2$, for each $k\in\{1,...,n\}$ we define
\begin{align*}
\Sigma_{n-1}^k&:= \big(x^2-(k-\tfrac{1}{2})^2\big)^{-1}\cdot\Pi_n 
\\ \Sigma_{n-2}^k&:=\big(x^2-(k-\tfrac{1}{2})^2\big)^{-1}\cdot\Pi_n\sum_{j=k+1}^n\big(x^2-(j-\tfrac{1}{2})^2\big)^{-1},
\end{align*}
where in the second case we assume $k\neq n$. Then, in order to show the second inequality in \eqref{improved_ineq}, it is enough to prove that, for each $k\in\{1,...,n-1\}$ and all $x\in (0,\tfrac{1}{2})$, \[
-\Sigma_{n-1}^k\geq\Sigma_{n-2}^k.
\]
In fact, once proving the latter inequality, it is enough to sum them all for all $k=1,...,n$, from where we conclude the desired result. Indeed, notice that, since $x\in(0,\tfrac{1}{2})$ we infer \[
\sum_{j=k+1}^n\big\vert x^2-(j-\tfrac{1}{2})^2\big\vert^{-1}\leq \sum_{j=k+1}^n\big\vert\tfrac{1}{4}-(j-\tfrac{1}{2})^2\big\vert^{-1}.
\] 
Therefore, recalling the following standard identity \[
\sum_{j=2}^n\big((j-\tfrac{1}{2})^2-\tfrac{1}{4}\big)^{-1}=\dfrac{n-1}{n}<1,
\]
by plugging the latter inequality into the definition of $\Sigma_{n-2}$ we deduce that  \[
\Sigma_{n-2}^k\leq -\big(x^2-(k-\tfrac{1}{2})^2\big)^{-1}\cdot\Pi_n=-\Sigma_{n-1}^k.
\]

\medskip

The case $n$ odd follows exactly the same lines (up to obvious modifications) and hence we omit it.
\end{proof}

Now, the following lemma give us the non-negativity of the sum of the third and fourth addend in the definition of $\mathbf{C}$.
\begin{lem}\label{mon_lem_2} Let $n\in\N$ with $n\geq 2$. Then, for all $x\in(-\tfrac{1}{2},\tfrac{1}{2})$ we have: \begin{align}\label{second_prop_ineq}
8\big(\Pi_n^2-\Pi_{n,0}^2\big)\Pi_n^2\Sigma_{n-1}\Sigma_{n-3}-16\big(\Pi_n^2-\Pi_{n,0}^2\big)\Pi_n^2\Sigma_{n-2}^2\geq0.
\end{align}
\end{lem}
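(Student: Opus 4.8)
The plan is to factor the left-hand side of \eqref{second_prop_ineq} and reduce the whole statement to a single inequality among the symmetric quantities $\Sigma_i$. Since both terms share the common factor $8\big(\Pi_n^2-\Pi_{n,0}^2\big)\Pi_n^2$, inequality \eqref{second_prop_ineq} is equivalent to
\[
8\big(\Pi_n^2-\Pi_{n,0}^2\big)\Pi_n^2\big(\Sigma_{n-1}\Sigma_{n-3}-2\Sigma_{n-2}^2\big)\geq 0.
\]
I would first dispatch the two outer factors. Clearly $\Pi_n^2\geq 0$. For the remaining factor, I note that on $(-\tfrac{1}{2},\tfrac{1}{2})$ each factor of $\Pi_n$ satisfies $0<(k-\tfrac{1}{2})^2-x^2\leq(k-\tfrac{1}{2})^2$, so taking absolute values and multiplying over $k$ gives $\vert\Pi_n\vert\leq\Pi_{n,0}$, that is, $\Pi_n^2-\Pi_{n,0}^2\leq 0$. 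Hence $8\big(\Pi_n^2-\Pi_{n,0}^2\big)\Pi_n^2\leq 0$, and the whole expression is nonnegative as soon as $\Sigma_{n-1}\Sigma_{n-3}-2\Sigma_{n-2}^2\leq 0$. This is the inequality I will actually prove.

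The key observation is that, writing $y_k:=x^2-(k-\tfrac{1}{2})^2$, the quantity $\Sigma_i$ is exactly the $i$-th elementary symmetric polynomial in the $n$ real numbers $y_1,\dots,y_n$. I would therefore invoke Newton's inequalities for real variables: with $p_j:=\Sigma_j/\binom{n}{j}$ one has $p_j^2\geq p_{j-1}p_{j+1}$ for $1\le j\le n-1$. Taking $j=n-2$ (which requires $n\geq 3$) and translating back to the $\Sigma_i$ gives
\[
\Sigma_{n-1}\Sigma_{n-3}\leq\frac{\binom{n}{3}\binom{n}{1}}{\binom{n}{2}^2}\,\Sigma_{n-2}^2=\frac{2(n-2)}{3(n-1)}\,\Sigma_{n-2}^2.
\]

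To finish, I would note that the sign of $\Sigma_i$ is $(-1)^i$ (each monomial is a product of $i$ negative factors $y_k$), so $\Sigma_{n-1}\Sigma_{n-3}\geq 0$ and $\Sigma_{n-2}^2\geq 0$. Since $\tfrac{2(n-2)}{3(n-1)}=\tfrac{2}{3}\cdot\tfrac{n-2}{n-1}<\tfrac{2}{3}<2$ for every $n\geq 2$, and $\Sigma_{n-2}^2\geq 0$, the displayed bound yields $\Sigma_{n-1}\Sigma_{n-3}\leq 2\Sigma_{n-2}^2$, i.e. $\Sigma_{n-1}\Sigma_{n-3}-2\Sigma_{n-2}^2\leq 0$, as needed. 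The degenerate case $n=2$ is covered automatically by the convention $\Sigma_{-1}=0$, which makes the left-hand side of the target inequality vanish. The only genuinely delicate point is the recognition that the $\Sigma_i$ are elementary symmetric polynomials in the $y_k$, together with the verification that Newton's constant $\tfrac{2(n-2)}{3(n-1)}$ never exceeds $2$; once these are in place the estimate is immediate and, in contrast with Lemma \ref{mon_lem_1}, requires no parity case-splitting. If one prefers to avoid citing Newton's inequalities, the same bound $\Sigma_{n-1}\Sigma_{n-3}\le 2\Sigma_{n-2}^2$ can be obtained by expanding both sides in the $y_k$ and matching monomials, though this route is considerably more laborious.
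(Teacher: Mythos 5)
Your proof is correct, but it settles the decisive step by a genuinely different route than the paper. Both arguments begin identically: you factor out $8\big(\Pi_n^2-\Pi_{n,0}^2\big)\Pi_n^2$, note it is non-positive on $(-\tfrac{1}{2},\tfrac{1}{2})$, and reduce the lemma to $\Sigma_{n-1}\Sigma_{n-3}\leq 2\Sigma_{n-2}^2$, which is exactly the reduction \eqref{main_ineq_42} in the paper. From there the paper proceeds combinatorially: it expands both sides as four-fold sums of the terms $\pi_{k,j,i,\ell}$, introduces the index sets $\Gamma_{\mathrm{RHS}}$ and $\Gamma_{\mathrm{LHS}}$, and builds, through a case analysis ($j\geq k-1$; $k\geq j+2$ with three sub-cases; the exceptional case $k=n$), an explicit assignment of each right-hand quadruple to a permutation of itself lying in $\Gamma_{\mathrm{LHS}}$, with each left-hand quadruple used at most twice — which is where the factor $2$ is consumed. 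You replace all of this by the observation that $\Sigma_i$ is the $i$-th elementary symmetric polynomial in the real numbers $y_k=x^2-(k-\tfrac{1}{2})^2$, so that Newton's inequalities $p_j^2\geq p_{j-1}p_{j+1}$, with $p_j=\Sigma_j/\binom{n}{j}$, give at $j=n-2$ (hence for $n\geq 3$, with $n=2$ handled trivially by the convention $\Sigma_{-1}=0$)
\[
\Sigma_{n-1}\Sigma_{n-3}\leq\frac{2(n-2)}{3(n-1)}\,\Sigma_{n-2}^2\leq\frac{2}{3}\,\Sigma_{n-2}^2\leq 2\Sigma_{n-2}^2,
\]
and your binomial bookkeeping is correct. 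The one point a careful reader will want flagged is that Newton's inequalities hold for arbitrary \emph{real} arguments (real-rootedness plus Rolle is all the classical proof needs), not just positive ones — the positivity hypothesis is only needed for Maclaurin-type consequences, and here the $y_k$ are all negative; since you invoke the correct version, the step stands. Your route is markedly shorter, needs no multiplicity bookkeeping, and in fact proves a strictly stronger inequality (constant $\tfrac{2}{3}$ in place of $2$). What the paper's argument buys in exchange is self-containedness: it uses nothing beyond sign patterns and elementary counting, whereas yours rests on a classical external result. Both are valid proofs of the lemma.
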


\begin{proof}
In fact, similarly as before, we start by reducing the problem to an easier one. First of all notice that, for all $n\in\N$ with $n\geq 2$ and all $x\in(-\tfrac{1}{2},\tfrac{1}{2})$, we have \[
\big(\Pi_n^2-\Pi_{n,0}^2)\Pi_n^2\leq 0.
\]
Then, it follows that inequality \eqref{second_prop_ineq} is equivalent to prove that, for all $n\in\N$ with $n\geq 2$ and all $x\in(-\tfrac{1}{2},\tfrac{1}{2})$ it holds: \begin{align}\label{main_ineq_42}
2\Sigma_{n-2}^2\geq \Sigma_{n-1}\Sigma_{n-3}.
\end{align}
In this case, we shall not split the analysis into two different cases (comparing separately one factor from the left-hand side with another one from the right-hand side and then multiplying both inequalities). Instead, in this case it is easier to consider both factors at the same time. First of all, we rewrite both  sides of \eqref{main_ineq_42} as: \begin{align*}
\Sigma_{n-2}^2=\Pi_n^2\cdot\sum_{k=1}^{n-1}\sum_{j=1}^{n-1}\sum_{i=j+1}^n\sum_{\ell=k+1}^n \pi_{k,j,i\ell} \ \,\hbox{ and }\ \,  \Sigma_{n-1}\Sigma_{n-3}=\Pi_n^2\cdot\sum_{k=1}^n\sum_{j=1}^{n-2}\sum_{i=j+1}^{n-1}\sum_{\ell=i+1}^n\pi_{k,j,i,\ell},
\end{align*}
where, \[
\pi_{k,j,i,\ell}:=\big(x^2-(k-\tfrac{1}{2})^2\big)^{-1}\big(x^2-(j-\tfrac{1}{2})^2\big)^{-1}\big(x^2-(i-\tfrac{1}{2})^2\big)^{-1}\big(x^2-(\ell-\tfrac{1}{2})^2\big)^{-1}.
\]
Similarly as before, we shall compare each addend in the right-hand side of \eqref{main_ineq_42} to a corresponding (properly chosen) addend in the left-hand side. The idea of the proof is to show that each quadruple in the list defined by all possible combinations $(k,j,i,\ell)$ associated to the four sums in the right-hand side can be mapped to a proper permutation of itself, so that the resulting pair $(\sigma(k),\sigma(j),\sigma(i),\sigma(\ell))$ belongs to the list of possible combinations associated to the four sums in the left-hand side. Of course, the main difficulty in doing this is that both lists are not equivalent, and it is actually not possible to simply map them by using the identity map. However, by taking advantage of the factor $2$ in \eqref{main_ineq_42}, together with the fact that all terms in both sides are non-negative\footnote{Since each addend is composed by the multiplication of four simultaneously-non-positive factors.} for all $x\in(-\tfrac{1}{2},\tfrac{1}{2})$, we shall show that it is possible to map all of these elements from one list to the other one, where we shall use each element in the left-hand side list at most two times. Notice that the desired inequality follows once we prove that the previous procedure holds. 

\medskip

In fact, first of all notice that $\pi_{k,j,i,\ell}$ is invariant under permutations, that is, for any quadruple $(k,j,i,\ell)\in\{1,...,n\}^4$ we have \[
\pi_{k,j,i,\ell}=\pi_{\sigma(k),\sigma(j),\sigma(i),\sigma(\ell)},
\]
for any injective function $\sigma:\{k,j,i,\ell\}\to\{k,j,i,\ell\}$. Now, we define $\Gamma_{\mathrm{RHS}}$ and $\Gamma_{\mathrm{LHS}}$, the sets of indexes of all possible combinations associated with each side of \eqref{main_ineq_42}:
\begin{align*}
\Gamma_{\mathrm{RHS}}&:=\big\{(k,j,i,\ell)\in \{1,...,n\}^4: \ k\leq n-1,\ j<i<\ell\big\},
\\ \Gamma_{\mathrm{LHS}}&:= \big\{(k,j,i,\ell)\in\{1,...,n\}^4: \ k< \ell,\ j<i \big\}.
\end{align*}
We remark we have excluded the case $k=n$ in the definition of $\Gamma_{\mathrm{RHS}}$. The reason behind this is to be able to match (as a first case) both lists more easily (since $k=n$ is not allowed in the left-hand side of \eqref{main_ineq_42}). We shall address this exceptional case at the end of the proof. In this sense, one important (yet trivial) observation is that, the cardinality of the set of all possible combinations associated to each side is given by \[
\big\vert\Gamma_{\mathrm{LHS}}\big\vert= \dfrac{n^2(n^2-1)}{4},\quad \hbox{and} \quad \big\vert\Gamma_{\mathrm{RHS}}^{+n}\big\vert=\dfrac{n^2(n^2-3n+2)}{6},
\]
where $ \Gamma_{\mathrm{RHS}}^{+n}:=\big\{(k,j,i,\ell)\in\{1,...,n\}^4:\ j<i<\ell\big\}$. Additionally, for all $n\geq 2$, we have $\vert\Gamma_{\mathrm{LHS}}\vert\geq \vert\Gamma_{\mathrm{RHS}}^{+n}\vert$. Said that, as remarked before, we shall split the set of indexes given by the right-hand side and map them into the set of indexes appearing in the left-hand side. Having all of this in mind, we split the analysis into three main steps.

\medskip

\textbf{Case $j\geq k-1$:} In this case, by the definition of both sets $\Gamma_{\mathrm{RHS}}$ and $\Gamma_{\mathrm{LHS}}$ we trivially have that: \[
\hbox{if } \, (k,j,i,\ell)\in\Gamma_{\mathrm{RHS}}, \, \hbox{ then } \ (k,j,i,\ell)\in\Gamma_{\mathrm{LHS}}.
\]
In fact, it is enough to notice that, if $(k,j,i,\ell)\in\Gamma_{\mathrm{RHS}}$, then, by the definition of $\Gamma_{\mathrm{RHS}}$ it follows \[
\ell\geq i+1\geq j+2\geq k+1,
\]
where we have used the fact that $j\geq k-1$ to obtain the latter inequality. Hence, we deduce that, in this case, it is enough to map $(k,j,i,\ell)$ to itself.

\medskip

\textbf{Case $k\geq j+2$:} Let us consider any quadruple $(k,j,i,\ell)\in\Gamma_{\mathrm{RHS}}$ with $k\geq j+2$. We split the analysis into three different sub-cases.
\begin{itemize} 
\item Case $\ell\geq k+1$. Again, since $\ell\geq k+1$, by definition of $\Gamma_{\mathrm{LHS}}$ it immediately follows that \[
(k,j,i,\ell)\in\Gamma_{\mathrm{LHS}}.
\]
\item Case $\ell=k$. In this case we permute the coordinates in the following way: \[
(k,j,i,k)\mapsto (\widetilde{k},\widetilde{j},\widetilde{i},\widetilde{\ell}) \quad \hbox{where} \quad \widetilde{k}=j,\ \widetilde{j}=i, \ \widetilde{i}=k, \ \widetilde{\ell}=k.
\]
With these definitions it is not hard to see that $(\widetilde{k},\widetilde{j},\widetilde{i},\widetilde{\ell})\in\Gamma_{\mathrm{LHS}}$. In fact, it is enough to notice that, on the one hand, by definition of $\Gamma_{\mathrm{RHS}}$ we have $\ell\geq i+1\geq j+2$, while on the other hand, by hypothesis $\ell=k$. Then, it follows that \[
k=\widetilde{\ell}\geq \widetilde{k}+2=j+2 \quad \hbox{ and } \quad k=\widetilde{i}\geq  \widetilde{j}+1=i+1.
\]
We point out that this quadruple $(\widetilde{k},\widetilde{j},\widetilde{i},\widetilde{\ell})$ has already been used in the first case ``$j\geq k-1$''. However, notice that, since $n-1\geq k=\widetilde{\ell}$, in the present situation we never reach a quadruple of the form $(\cdot,\cdot,\cdot,n)$. This fact shall be important at the end of the proof.

\item Case $\ell\leq k-1$. First of all notice that, if $\ell\leq k-1$ and $(k,j,i,\ell)\in\Gamma_{\mathrm{RHS}}$, it transpires that $k\geq j+3$. Consequently, in this case we permute the first and last entry of the quadruple: \[
(k,j,i,\ell)\mapsto \big(\widetilde{k},j,i,\widetilde{\ell}\big) \quad \hbox{ where } \quad \widetilde{k}=\ell \quad \hbox{and}\quad \widetilde{\ell}=k.
\]
Thus, with these definitions we obtain that $\widetilde{\ell}\geq \widetilde{k}+1$, and therefore, $\big(\widetilde{k},j,i,\widetilde{\ell}\big)\in\Gamma_{\mathrm{LHS}}$. Moreover, due to the fact that $\ell\geq j+2$, we infer that $\widetilde{k}\geq j+2$, and hence this quadruple has already been used in the first sub-case of the present case, that is, ``$k\geq j+2$, sub case $\ell\geq k+1$''. Of course, as remarked before, we have \[
\pi_{k,j,i,\ell}=\pi_{\widetilde{k},j,i,\widetilde{\ell}}.
\]
Finally, by the same reason as in the previous case, in the present situation we never reach any quadruple of the form $(\cdot,\cdot,\cdot, n)$.
\end{itemize}

\textbf{Case $k=n$:} By the previous procedure we have used (at most) two times many of the  quadruples on the list associated to the left-hand side. However, notice that we have used at most once any quadruple of the form $(\cdot,\cdot,\cdot,n)$. Now, if $(k,j,i,\ell)\in\Gamma_{\mathrm{RHS}}$, then $j<n$ and $i<\ell$. Thus, in this case, by taking advantage of the factor $2$ in \eqref{main_ineq_42} again, we map \[
(n,j,i,\ell)\mapsto (j,i,\ell,n)\in\Gamma_{\mathrm{LHS}}.
\] 
Therefore, we have mapped each addend of the right-hand side of \eqref{main_ineq_42}, to the ``same addend'' (numerically they are the same due to the invariance under permutations of $\pi_{k,j,i,\ell}$)  appearing in the left-hand side, where we are repeating each addend in the left-hand side at most two times. Finally, notice that, any other addends in the left-hand side that has not been used is non-negative. Hence, gathering all the previous analysis we conclude $
2\Sigma_{n-2}^2\geq \Sigma_{n-1}\Sigma_{n-3}$.
\end{proof}
Now, before going further and for the sake of simplicity, let us prove the following inequality which shall be useful to treat the remaining addends in $\mathcal{V}_n$.
\begin{lem} Let $n\in\N$ with $n\geq 2$. For all $x\in(-\tfrac{1}{2},\tfrac{1}{2})$ it holds:\begin{align}\label{improimpro}
-(\Pi_n^2-\Pi_{n,0}^2)&\geq 2x^2\Pi_{n,0}^2\sum_{k=1}^n\big(k-\tfrac{1}{2}\big)^{-2}
\\ & \quad -x^4\Pi_{n,0}^2\left(\sum_{k=1}^n\big(k-\tfrac{1}{2}\big)^{-4}+4\sum_{k=1}^{n-1}\sum_{j=k+1}^n\big(k-\tfrac{1}{2}\big)^{-2}\big(j-\tfrac{1}{2}\big)^{-2}\right).\nonumber
\end{align}
\end{lem}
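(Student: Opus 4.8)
The plan is to normalize the inequality and reduce it to a single clean polynomial estimate. Dividing \eqref{improimpro} by the strictly positive constant $\Pi_{n,0}^2$ and setting $t:=x^2\in[0,\tfrac14)$ and $b_k:=(k-\tfrac12)^{-2}$, one checks directly that
\[
\frac{\Pi_n^2}{\Pi_{n,0}^2}=\prod_{k=1}^n\Big(1-b_kt\Big)^2=:h_n(t)^2.
\]
Writing $S_1:=\sum_{k=1}^nb_k$, $S_2:=\sum_{k=1}^nb_k^2$ and $D:=\sum_{1\le k<j\le n}b_kb_j$, the right-hand side of \eqref{improimpro} equals $\Pi_{n,0}^2\big(2S_1t-(S_2+4D)t^2\big)$, so that \eqref{improimpro} is equivalent to
\[
h_n(t)^2\le Q_n(t):=1-2S_1t+(S_2+4D)t^2,\qquad t\in[0,\tfrac14).
\]
First I would record the key algebraic observation that $Q_n$ is precisely the second order Taylor polynomial of $h_n^2$ at $t=0$: since $h_n(t)=1-S_1t+Dt^2-\cdots$, one has $h_n(t)^2=1-2S_1t+(S_1^2+2D)t^2+O(t^3)$, and the elementary identity $S_1^2=S_2+2D$ gives $S_1^2+2D=S_2+4D$. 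This cancellation at orders $t^0,t^1,t^2$ is what makes the bound tight, and is the reason the exact coefficients in \eqref{improimpro} appear.

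Next I would prove $h_n^2\le Q_n$ by induction on $n$. The base case $n=1$ is an equality, $h_1(t)^2=(1-b_1t)^2=1-2b_1t+b_1^2t^2=Q_1(t)$ (here $D=0$). For the inductive step, setting $\beta:=b_{n+1}$ and multiplying the inductive hypothesis $h_n^2\le Q_n$ by the nonnegative factor $(1-\beta t)^2$ (so that no sign information on $Q_n$ is required), I obtain
\[
h_{n+1}(t)^2=h_n(t)^2(1-\beta t)^2\le Q_n(t)(1-\beta t)^2,
\]
and it remains only to compare $Q_n(t)(1-\beta t)^2$ with $Q_{n+1}(t)$.

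The heart of the matter is the explicit computation
\[
Q_n(t)(1-\beta t)^2-Q_{n+1}(t)=\beta\,t^3\big(\gamma\beta t-2\alpha\beta-2\gamma\big),
\]
where $\alpha:=S_1$ and $\gamma:=S_2+4D$ denote the coefficients of $Q_n$. I would verify this by expanding both polynomials and checking, using $S_1^{(n+1)}=S_1+\beta$, $S_2^{(n+1)}=S_2+\beta^2$ and $D^{(n+1)}=D+\beta S_1$, that the terms of degree $0,1,2$ cancel exactly (again forced by the Taylor-polynomial interpretation). Since $\alpha,\gamma>0$ and $\beta t=b_{n+1}x^2\le 4x^2<1<2$ on $(-\tfrac12,\tfrac12)$, the bracket equals $\gamma(\beta t-2)-2\alpha\beta<0$ for $t\in(0,\tfrac14)$, while the prefactor $\beta t^3$ is nonnegative. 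Hence $Q_n(t)(1-\beta t)^2\le Q_{n+1}(t)$ and the induction closes, proving $h_n^2\le Q_n$ and therefore \eqref{improimpro}. I expect the only genuinely delicate point to be the bookkeeping in this degree-by-degree cancellation; the remaining ingredients are an elementary normalization and a one-line sign check resting on the fact that $b_kt<1$ throughout the interval.
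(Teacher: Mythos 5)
Your argument is correct --- I verified the normalization, the exact identity
\[
Q_n(t)(1-\beta t)^2-Q_{n+1}(t)=\beta t^3\big(\gamma\beta t-2\alpha\beta-2\gamma\big),
\]
and the sign check $\beta t\le 4x^2<1$ --- but it takes a genuinely different route from the paper's. Both proofs start from the same reduction: your inequality $h_n(t)^2\le Q_n(t)$ is precisely the paper's reformulation \eqref{equi} after dividing by $\Pi_{n,0}^2$, and in both cases the crucial observation is that the right-hand side is the second-order truncation of the left-hand side (your identity $S_1^2+2D=S_2+4D$ is exactly how the paper computes $a_1^2$ and $a_2^2$). From there the paper argues globally: it expands $\Pi_n^2$ in full as the alternating-sign polynomial $a_0^2-a_1^2x^2+a_2^2x^4\mp\cdots+a_{2n}^2x^{4n}$, pairs consecutive terms of the tail, and reduces the lemma to the coefficient bounds $4a_{2m-1}^2\ge a_{2m}^2$ of \eqref{equiv_impro_proof}, which it proves through a rather heavy combinatorial bookkeeping with the index sets $\Gamma_k^{2m-1}$, $\Gamma_k^{2m}$ and the series bounds \eqref{tech_ident_proof}; it also needs a separate explicit computation for $n=2$. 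You instead argue locally, by induction on $n$: multiplying the inductive hypothesis by the nonnegative factor $(1-\beta t)^2$ (so no sign information on $Q_n$ is needed, as you rightly note) and comparing $Q_n\cdot(1-\beta t)^2$ with $Q_{n+1}$ via the exact cubic-order identity above, whose sign is immediate on $t\in[0,\tfrac14)$. Your route is shorter, requires no combinatorics and no separate base computation (the case $n=1$ is an identity, $h_1^2=Q_1$), and makes the tightness of the constants in \eqref{improimpro} transparent through the Taylor-polynomial interpretation. What the paper's route buys in exchange is strictly more information: it controls every pair of coefficients in the full expansion of $\Pi_n^2$, not merely the truncation error on $(-\tfrac12,\tfrac12)$. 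None of that extra strength is needed for the lemma, however, and the subsequent limit computations in the proof of Theorem \ref{thm_monotonicity} rely only on the expansion \eqref{approx_thetas}, which your computation of the Taylor coefficients delivers just as well.
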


\begin{proof}
Intuitively, the right-hand side of \eqref{improimpro} corresponds to the first two terms in the expansion of the left-hand side. Moreover, it is not difficult to see (by using the fact that $x\in(-\tfrac{1}{2},\tfrac{1}{2})$) that the right-hand side in \eqref{improimpro} is always non-negative. Now, for the sake of clarity let us start by proving inequality \eqref{improimpro} for the case $n=2$. In fact, in this case the left-hand side becomes \[
-(\Pi_2^2-\Pi_{2,0}^2)=-x^8+5x^6-\tfrac{59}{8}x^4+\tfrac{45}{16}x^2.
\]
On the other hand, when $n=2$ both terms in the right-hand can be simply computed as: \[
2x^2\Pi_{2,0}^2\sum_{k=1}^2\big(k-\tfrac{1}{2}\big)^{-2}=\tfrac{45}{16}x^2, 
\]
and \[
-x^4\Pi_{2,0}^2\sum_{k=1}^2\big(k-\tfrac{1}{2}\big)^{-4}-x^4\Pi_{2,0}^2\sum_{k=1}^1\sum_{j=2}^2\big(k-\tfrac{1}{2}\big)^{-2}\big(j-\tfrac{1}{2}\big)^{-2}=-\tfrac{59}{8}x^4.
\]
Therefore, by noticing that $-x^8+5x^6\geq0$ for all $x\in(-\tfrac{1}{2},\tfrac{1}{2})$ we conclude the case $n=2$. For the general case, after trivial rearrangements, we can rewrite inequality \eqref{improimpro} as: \begin{align}\label{equi}
&\Pi_{n,0}^2-2x^2\Pi_{n,0}^2\sum_{k=1}^n\big(k-\tfrac{1}{2}\big)^{-2}
\\ & \quad +x^4\Pi_{n,0}^2\left(\sum_{k=1}^n\big(k-\tfrac{1}{2}\big)^{-4}+4\sum_{k=1}^{n-1}\sum_{j=k+1}^n\big(k-\tfrac{1}{2}\big)^{-2}\big(j-\tfrac{1}{2}\big)^{-2}\right)\geq \Pi_n^2 \nonumber
\end{align}
Since we have already proved the case $n=2$, from now on we shall assume that $n\geq 3$. Hence, it is enough to prove \eqref{equi}. In order to do that, we express $\Pi_n^2$ as: \begin{align}\label{expansion_pi}
\Pi_n^2=a_0^2-a_1^2x^2+a_2^2x^4\mp...+a_{2n}^2x^{4n}.
\end{align}
By explicit computations it is not difficult to check that, for any $n\in\N$ with $n\geq 3$ we have \begin{align*}
a_0^2&=\Pi_{n,0}^2, \qquad a_1^2=2\Pi_{n,0}^2\sum_{k=1}^n \big(k-\tfrac{1}{2}\big)^{-2},
\\ a_2^2&=\Pi_{n,0}^2\left(\sum_{k=1}^n\big(k-\tfrac{1}{2})^{-4}+4\sum_{k=1}^{n-1}\sum_{j=k+1}^n\big(k-\tfrac{1}{2}\big)^{-2}\big(j-\tfrac{1}{2}\big)^{-2}\right).
\end{align*}
Therefore, by plugging these identities into \eqref{equi}, and after direct cancellations we deduce that the problem is equivalent to prove: \[
0\geq -a_3^2x^6+a_4^2x^8\mp....+a_{2n}^2x^{4n}=:g(x),
\]
where $a_3^2,...,a_{2n}^2$ are the coefficients appearing in \eqref{expansion_pi}. Now, we group the addends in the definition of $g(x)$ into pairs of ``easier'' addends as:\[
g(x)=(-a_3^2x^6+a_4^2x^8)+(-a_5^2x^{10}+a_6^2x^{12})+...+(-a_{2n-1}^2x^{4n-1}+a_{2n}^2x^{4n}).
\]
Now, we claim that for all $m\in\{2,...,n\}$ the following holds:
\begin{align}\label{equiv_impro_proof}
4a_{2m-1}^2\geq a_{2m}^2.
\end{align}
Notice that, if we assume that the claim is true for the moment, then, gathering the latter inequality together with the fact that $x\in (-\tfrac{1}{2},\tfrac{1}{2})$ we would infer that, for all $m\in\{2,...,n\}$, \[
0\geq -a_{2m-1}^2x^{4m-2}+a_{2m}x^{4m}=x^{4m-2}(-a_{2m-1}^2+a_{2m}x^2) \quad \hbox{ for all } \, x\in (-\tfrac{1}{2},\tfrac{1}{2}).
\]
Clearly this would conclude the proof of inequality \eqref{equi}, and hence the proof of the lemma. Now, for the sake of clarity let us start by explicitly writing the first two cases ($a_3$ and $a_4$). In fact, by explicit computations we have: \begin{align*}
a_3^2&=2\Pi_{n,0}^2\sum_{i_1=1}^n\sum_{\substack{i_2=1 \\ i_2\neq i_1}}^n\big(i_1-\tfrac{1}{2}\big)^{-4}\big(i_2-\tfrac{1}{2}\big)^{-2}
\\ & \quad +8\Pi_{n,0}^2\sum_{i_1=1}^{n-2}\sum_{i_2=i_1+1}^{n-1}\sum_{i_3=i_2+1}^n\big(i_1-\tfrac{1}{2}\big)^{-2}\big(i_2-\tfrac{1}{2}\big)^{-2}\big(i_3-\tfrac{1}{2}\big)^{-2},
\\ a_4^2&=\Pi_{n,0}^2\sum_{i_1=1}^{n-1}\sum_{i_2=i_1+1}^n\big(i_1-\tfrac{1}{2}\big)^{-4}\big(i_2-\tfrac{1}{2}\big)^{-4}
\\ & \quad + 4\Pi_{n,0}^2\sum_{i_1=1}^n\sum_{\substack{i_2=1 \\ i_2\neq i_1}}^n\sum_{\substack{i_3=i_2+1 \\ i_3\neq i_1}}^n\big(i_1-\tfrac{1}{2}\big)^{-4}\big(i_2-\tfrac{1}{2}\big)^{-2}\big(i_3-\tfrac{1}{2}\big)^{-2}
\\ & \quad +16\Pi_{n,0}^2\sum_{i_1=1}^{n-3}\sum_{i_2=i_1+1}^{n-2}\sum_{i_3=i_2+1}^{n-1}\sum_{i_4=i_3+1}^{n} \big(i_1-\tfrac{1}{2}\big)^{-2}\big(i_2-\tfrac{1}{2}\big)^{-2}\big(i_3-\tfrac{1}{2}\big)^{-2}\big(i_4-\tfrac{1}{2}\big)^{-2}.
\end{align*}
Now, for the general case we distinguish two different cases, each of which is simultaneously composed by two different sub-cases ($a_{2m-1}$ and $a_{2m}$). The main difference between these inner sub-cases comes from the fact that $2m-1$ is always odd and $2m$ always even.

\medskip

\textbf{Case $m\leq n$:} By basic combinatorial arguments it is not difficult to see that $a_{2m-1}$ can be explicitly written as the sum of $m$ different type of terms. In fact, in order to do that let us start by describing the set of indexes that define each of these terms. Indeed, for $k=1,...,m$ we define the sets $\Gamma_1^{2m-1},...,\Gamma_m^{2m-1}$ as: 
\begin{align*}
\Gamma_k^{2m-1}&:=\big\{(i_1,...,i_{m+k-1})\in\N^{m+k-1}: \ 1\leq i_1<...<i_{m-k}\leq n,
\\ & \qquad \ 1\leq i_{m-k+1}<...<i_{m+k-1}\leq n, \ \, i_{m-k+1},...,i_{m+k-1}\notin\{i_1,...,i_{m-k}\} \big\}.
\end{align*}
In other words, each $\Gamma^m_k$ is composed by two different types of indexes. First we have $(m-k)$-indexes which are internally ordered. Then, we have the remaining $(2k-1)$-indexes which are simultaneously internally ordered (and they never coincide). Intuitively, the first $(m-k)$ indexes shall be associated to the factors with power $-4$ in the sums below, while the remaining $(2k-1)$ indexes shall be associated to the factors with power $-2$.
Then, taking advantage of the definition of $\Gamma^{2m-1}_k$ we can write $a_{2m-1}$ as\footnote{This can be seen as having two different copies of a list of $n$ elements. If we choose $2m-1$ elements out of the ``extended list'' of $2n$ elements, each element can be chosen in two different ways. The $m$ different types of terms (and the motivation for defining these $\Gamma_k^{2m-1}$) are associated to the number of repeated elements we choose.}:
\begin{align}\label{def_a_2m_1}
a_{2m-1}&= \Pi_{n,0}^2\sum_{k=1}^m 2^{2j-1}\sum_{(i_1,...,i_{m+k-1})\in\Gamma_k^{2m-1}}\big(i_1-\tfrac{1}{2}\big)^{-4}...\, \big(i_{m-k}-\tfrac{1}{2}\big)^{-4}\times \nonumber
\\ & \qquad \times \big(i_{m-k+1}-\tfrac{1}{2}\big)^{-2}...\, \big(i_{m+k-1}-\tfrac{1}{2}\big)^{-2}.
\end{align}
A few words to clarify the limit cases: Notice that, when $k=m$, the inner sum is composed only by terms with power $-2$, while in the case $k=1$ there is only one factor with power $-2$ (exactly as in the definition of the sets $\Gamma_m^m$ and $\Gamma_1^m$ respectively).

\medskip

Now for $a_{2m}$, it is not difficult to see that $a_{2m}$ can be expressed as the sum of $m+1$ different types of terms. Similarly as before, we start by describing the set of indexes for each of these sums. In fact, for $k=0,...,m$ we define the sets $\Gamma_{0}^{2m},...,\Gamma_m^{2m}$ as:
\begin{align*}
\Gamma_k^{2m}&:=\big\{(i_1,...,i_{m+k})\in\N^{m+k}: \ 1\leq i_1<...<i_{m-k}\leq n,
\\ & \qquad \ 1\leq i_{m-k+1}<...<i_{m+k}\leq n, \ \, i_{m-k+1},...,i_{m+k}\notin\{i_1,...,i_{m-k}\}\big\}.
\end{align*}
We emphasize that in this case $k$ starts at $k=0$ (in contrast with the previous case). Of course, in the present case as well as in the previous one above, whenever a set of indexes becomes empty, then the corresponding constraint does not exist. For example, in the latter definition, when $k=0$, the inequality \[
1\leq i_{m-k+1}<...<i_{m+k}\leq n,
\]
always holds (it is vacuously true since $i_{m-k+1}$ does not exists). Then, by taking advantage of the definition of $\Gamma_k^{2m}$, we can express $a_{2m}$ as: \begin{align}\label{def_a_2m}
a_{2m}&:= \Pi_{n,0}^2\sum_{k=0}^m 2^{2k}\sum_{(i_1,...,i_{m+k})\in\Gamma_k^{2m}}\big(i_1-\tfrac{1}{2}\big)^{-4}...\, \big(i_{m-k}-\tfrac{1}{2}\big)^{-4}\times\nonumber 
\\ & \qquad \ \times \big(i_{m-k+1}-\tfrac{1}{2}\big)^{-2}...\, \big(i_{m+k}-\tfrac{1}{2}\big)^{-2}.
\end{align}
Finally, it is not too difficult to prove \eqref{equiv_impro_proof} by using the previous expressions and by recalling the following standard (but useful) identities:
\begin{align}\label{tech_ident_proof}
\sum_{i=2}^\infty \big(i-\tfrac{1}{2}\big)^{-2}=\tfrac{\pi^2-8}{2}<1 \quad \hbox{ and }\quad \sum_{i=2}^\infty \big(i-\tfrac{1}{2}\big)^{-4}=\tfrac{\pi^4-96}{6}<1.
\end{align}
In fact, having all of the previous identities and definitions at hand, the idea is to notice that, except for the case $i=1$, all factors $(i-\tfrac{1}{2})^{-1}$ are smaller than $1$. Even more, as the previous identities show, their square and fourth-power are summable, and their sums are smaller than $1$. That motivates us to compare the sums over the set $\Gamma_k^{2m-1}$ with respect to the one associated to $\Gamma_{k}^{2m}$. We point out that, for each $k=1,...,m$ (we skip the case $k=0$ for the moment), the vectors in $\Gamma_{k}^{2m}$ have exactly one more coordinate than the ones in $\Gamma_k^{2m-1}$. Then, if $i_{m+k-1}\neq n$, for any $(i_1,...,i_{m+k-1})\in\Gamma_k^{2m-1}$ we define the restriction set \begin{align*}
\Gamma_k^{2m}[i_1,...,i_{m+k-1}]&:=\big\{(j_1,...,j_{m+k})\in\N^{m+k}: \ j_1=i_1,...,\ j_{m+k-1}=i_{m+k-1}, 
\\ & \qquad \ (j_1,...,j_{m+k})\in \Gamma_k^{2m} \big\}.
\end{align*}
Then, by using \eqref{tech_ident_proof} it immediately follows that \begin{align}\label{restricted_i}
&\big(i_1-\tfrac{1}{2}\big)^{-4}...\big(i_{m-k}-\tfrac{1}{2}\big)^{-4}\big(i_{m-k+1}-\tfrac{1}{2}\big)^{-2}...\big(i_{m+k-1}-\tfrac{1}{2}\big)^{-2}\geq 
\\ & \quad \geq \sum_{(j_1,...,j_{m+k})\in \Gamma_k^{2m}[i_1,...,i_{m+k-1}]}\big(j_1-\tfrac{1}{2}\big)^{-4}...\big(j_{m-k}-\tfrac{1}{2}\big)^{-4}\big(j_{m-k+1}-\tfrac{1}{2}\big)^{-2}...\big(j_{m+k}-\tfrac{1}{2}\big)^{-2}\nonumber
\end{align} 
Notice that, by gathering inequality \eqref{restricted_i} for all $(i_1,...,i_{m+k-1})\in\Gamma_{k}^{2m-1}$ with $i_{m+k-1}\neq n$ we obtain exactly the sum over $\Gamma_m^{2k}$ on the right-hand side. However, the resulting sum in the left-hand side produced by the previous procedure is strictly smaller than the sum over all possible indexes in $\Gamma_{m}^{2m-1}$ since we have never used any index with $i_{m+k-1}=n$. Finally, notice that for fixed $k\in\{1,...,m\}$, the corresponding sum over $\Gamma_k^{2m}$ in the definition of $a_{2m}$ (see \eqref{def_a_2m} above) has an extra $2$ factor (extra with respect to the same term in $a_{2m-1}$, see \eqref{def_a_2m_1} above). Therefore, taking into account this extra multiplicative factor $2$ on the right-hand side, the analysis above ensure us that \begin{align*}
2a_{2m-1}&\geq  \Pi_{n,0}^2\sum_{k=1}^m 2^{2k}\sum_{(i_1,...,i_{m+k})\in\Gamma_k^{2m}}\big(i_1-\tfrac{1}{2}\big)^{-4}...\, \big(i_{m-k}-\tfrac{1}{2}\big)^{-4}\times
\\ & \quad \ \times \big(i_{m-k+1}-\tfrac{1}{2}\big)^{-2}...\, \big(i_{m+k}-\tfrac{1}{2}\big)^{-2}.
\end{align*}
Finally, we shall use the remaining $2a_{2m-1}$ in the left-hand side of \eqref{equiv_impro_proof} to bound the sum associated to $\Gamma_0^{2m}$. In fact, it is easy to see from the definitions of $\Gamma_i^{2m}$ that $\Gamma_0^{2m}\subseteq \Gamma_1^{2m-1}$. Then, for any $(i_1,...,i_m)\in\Gamma_0^{2m}$, since the last entry always satisfies $(i_{m}-\tfrac{1}{2})^{-1}<1$, we infer \begin{align}\label{restricted_i_2}\big(i_1-\tfrac{1}{2}\big)^{-4}...\big(i_{m-1}-\tfrac{1}{2}\big)^{-4}\big(i_{m}-\tfrac{1}{2}\big)^{-2}\geq  \big(i_1-\tfrac{1}{2}\big)^{-4}...\big(i_{m}-\tfrac{1}{2}\big)^{-4}.
\end{align} 
Gathering inequality \eqref{restricted_i_2} associated to all possible $(i_1,...,i_m)\in\Gamma_0^{2m}$  we obtain that \[
2a_{2m-1}\geq  \Pi_{n,0}^2\sum_{(i_1,...,i_{m})\in\Gamma_0^{2m}}\big(i_1-\tfrac{1}{2}\big)^{-4}...\, \big(i_{m}-\tfrac{1}{2}\big)^{-4},
\]
and therefore $4a_{2m-1}^2\geq a_{2m}^2$, which finish the proof of the lemma for the case $m\leq n$. The case $m> n$ follows exactly the same lines (up to obvious modifications) and hence we omit it.
\end{proof}

With this lemma at hand we are able to handle the remaining terms in $\mathcal{V}_n$, that is, the sum of $\mathbf{A}$ with the first addends in $\mathbf{B}$ and $\mathbf{C}$. We recall that, in the definition of $\mathcal{V}_n$, the factors $\mathbf{A}$, $\mathbf{B}$ and $\mathbf{C}$ are multiplied by $x^0$, $x^2$ and $x^4$ respectively. Notice that the next proposition concludes the of the non-negativity of $\mathcal{V}_n$ in $(-\tfrac{1}{2},\tfrac{1}{2})$.

\begin{prop}\label{mon_prop_1} Let $n\in\N$ with $n\geq 2$. For all $x\in(-\tfrac{1}{2},\tfrac{1}{2})$ it holds:
\begin{align}\label{main_ineq_mon}
-\big(\Pi_n^2-\Pi_{n,0}^2\big)\Pi_n^2\Sigma_{n-1}^2+2x^2\Pi_{n,0}^2\Pi_n\Sigma_{n-1}^3+4x^4\Pi_{n,0}^2\Sigma_{n-1}^4\geq0.
\end{align}
\end{prop}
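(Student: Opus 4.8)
The plan is to peel off the manifestly non-negative factors and reduce \eqref{main_ineq_mon} to a scalar inequality that is tailor-made for the preceding lemma. Since $\Sigma_{n-1}^2\ge 0$ on $(-\tfrac12,\tfrac12)$, I would first factor $\Sigma_{n-1}^2$ out of the left-hand side, so that it suffices to prove
\[
-\big(\Pi_n^2-\Pi_{n,0}^2\big)\Pi_n^2+2x^2\Pi_{n,0}^2\Pi_n\Sigma_{n-1}+4x^4\Pi_{n,0}^2\Sigma_{n-1}^2\ge 0 .
\]
Next, recalling from the proof of Lemma \ref{mon_lem_1} the identity $\Sigma_{n-1}=\Pi_n\,\mathcal S$ with $\mathcal S:=\sum_{k=1}^n\big(x^2-(k-\tfrac12)^2\big)^{-1}$ (valid since $\Pi_n\neq0$ on the open interval), I would substitute and factor the common $\Pi_n^2\ge0$, reducing the problem to the purely scalar inequality
\[
-\big(\Pi_n^2-\Pi_{n,0}^2\big)+2x^2\Pi_{n,0}^2\,\mathcal S+4x^4\Pi_{n,0}^2\,\mathcal S^2\ge 0 .
\]

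Now the point is to feed the second-order lower bound \eqref{improimpro} into the first term. Writing $a_k:=(k-\tfrac12)^2$ and $b_k:=(k-\tfrac12)^2-x^2$ (so that $0<b_k\le a_k$ and $\mathcal S=-\sum_k b_k^{-1}$), the lemma bounds $-(\Pi_n^2-\Pi_{n,0}^2)$ from below by $2x^2\Pi_{n,0}^2\sum_k a_k^{-1}-x^4\Pi_{n,0}^2\big(\sum_k a_k^{-2}+4\sum_{k<j}a_k^{-1}a_j^{-1}\big)$. The crucial cancellation is that the two first-order (in $x^2$) contributions combine as $\sum_k a_k^{-1}+\mathcal S=\sum_k\big(a_k^{-1}-b_k^{-1}\big)=-x^2\sum_k \tfrac{1}{a_kb_k}$, since $b_k-a_k=-x^2$. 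After this substitution every surviving term carries a factor $x^4\Pi_{n,0}^2\ge0$, so the displayed scalar inequality follows once I show
\[
4\,\mathcal S^2\ \ge\ 2\sum_{k=1}^n\frac{1}{a_kb_k}+\sum_{k=1}^n\frac{1}{a_k^2}+4\sum_{k<j}\frac{1}{a_ka_j}.
\]

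This last inequality I would prove term by term, exploiting $0<b_k\le a_k$. Expanding $4\mathcal S^2=4\sum_k b_k^{-2}+8\sum_{k<j}(b_kb_j)^{-1}$, I compare diagonal and off-diagonal parts separately. For the diagonal, multiplying through by $a_k^2b_k^2$ reduces $4b_k^{-2}\ge 2(a_kb_k)^{-1}+a_k^{-2}$ to $4a_k^2-2a_kb_k-b_k^2\ge0$, which holds because the left-hand side equals $b_k^2>0$ at $a_k=b_k$ and is increasing in $a_k$ for $a_k\ge b_k$. For the off-diagonal, $b_kb_j\le a_ka_j$ gives $8(b_kb_j)^{-1}\ge 8(a_ka_j)^{-1}\ge 4(a_ka_j)^{-1}$. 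Summing these pointwise bounds yields the claim, and tracing back the reductions proves \eqref{main_ineq_mon}.

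The main obstacle is organizational rather than computational: one must set up the reductions so that the residual after invoking \eqref{improimpro} is exactly of order $x^4$, which forces the first-order terms to cancel as above; getting the bookkeeping of the coefficients right is what makes the final term-by-term comparison close with room to spare (the factor $4$ in front of $\mathcal S^2$, rather than a smaller constant, is precisely what keeps the diagonal estimate $4a_k^2-2a_kb_k-b_k^2\ge0$ valid). As in the previous lemmas, the case $n=1$ is already covered in \cite{Pa}, so one only needs to treat $n\ge2$ here.
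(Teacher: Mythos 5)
Your proof is correct, and its opening moves coincide with the paper's: you factor out $\Sigma_{n-1}^2$ (this is exactly the paper's reduction to the inequality $\mathrm{H}\geq 0$ in \eqref{ineq_HH}) and then feed in the lower bound \eqref{improimpro}. Where you genuinely diverge is in how the residual is closed. The paper writes $\Sigma_{n-1}=\Pi_n\sum_k\big(x^2-(k-\tfrac{1}{2})^2\big)^{-1}$, distributes everything over the index $k$ to obtain $x^2\Pi_{n,0}^2\Pi_n^2\sum_k\Delta_k$, bounds each $\Delta_k$ below by a simpler $\widetilde{\Delta}_k$ via the numerical estimates $\sum_{j\geq2}(j-\tfrac{1}{2})^{-2}<1$ and $\sum_j\big((j-\tfrac{1}{2})^2-x^2\big)^{-1}\geq 4$, and finally proves $\widetilde{\Delta}_k\geq0$ by a calculus argument ($\widetilde{\Delta}_k$ is even, vanishes at $x=0$, and has nonnegative derivative on $(0,\tfrac{1}{2})$). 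You instead exploit the exact identity $\sum_k a_k^{-1}+\mathcal{S}=-x^2\sum_k(a_kb_k)^{-1}$ (in your notation $a_k=(k-\tfrac{1}{2})^2$, $b_k=a_k-x^2$, $\mathcal{S}=\sum_k(x^2-a_k)^{-1}$), so the order-$x^2$ contributions cancel identically rather than only asymptotically; this pulls out a clean factor $x^4\Pi_{n,0}^2$ and reduces the proposition to the symmetric-sum inequality $4\mathcal{S}^2\geq 2\sum_k(a_kb_k)^{-1}+\sum_k a_k^{-2}+4\sum_{k<j}(a_ka_j)^{-1}$, which you settle by elementary pointwise comparisons using only $0<b_k\leq a_k$ (your diagonal bound $4a_k^2-2a_kb_k-b_k^2\geq b_k^2>0$ and off-diagonal bound $8(b_kb_j)^{-1}\geq 4(a_ka_j)^{-1}$ are both verified correctly). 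As for what each route buys: the paper's per-$k$ calculus argument matches the style of its other lemmas and directly exhibits the strict positivity invoked later; your finish avoids differentiation and series estimates entirely, makes transparent why the leftover is exactly of order $x^4$ (it is the exact-algebra counterpart of the cancellation the paper re-derives asymptotically when computing $\lim_{x\to0}\mathcal{V}_n/x^4$ at the end of the proof of Theorem \ref{thm_monotonicity}), and it too yields strict positivity for $x\neq0$, since your diagonal estimate is strict and $\Pi_n^2\Sigma_{n-1}^2>0$ on $(-\tfrac{1}{2},\tfrac{1}{2})$.
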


\begin{proof}
In fact, first of all, by factorizing by $\Sigma_{n-1}^2$ we infer that inequality \eqref{main_ineq_42} is equivalent to proving that, for all $x\in(-\tfrac{1}{2},\tfrac{1}{2})$ the following holds: \begin{align}\label{ineq_HH}
\mathrm{H}:=-\big(\Pi_n^2-\Pi_{n,0}^2\big)\Pi_n^2+2x^2\Pi_{n,0}^2\Pi_n\Sigma_{n-1}+4x^4\Pi_{n,0}^2\Sigma_{n-1}^2\geq0.
\end{align}
On the other hand, notice that by using inequality  \eqref{improimpro} we have  \begin{align*}
\mathrm{H}(x)&\geq  x^2\Pi_{n,0}^2\Bigg(2\Pi_n^2\sum_{k=1}^n\big(k-\tfrac{1}{2}\big)^{-2}+2\Pi_n\Sigma_{n-1}+4x^2\Sigma_{n-1}^2-x^2\Pi_n^2\sum_{k=1}^n\big(k-\tfrac{1}{2}\big)^{-4}
\\ & \quad  -4x^2\Pi_n^2\sum_{k=1}^{n-1}\sum_{j=k+1}^n\big(k-\tfrac{1}{2}\big)^{-2}\big(j-\tfrac{1}{2}\big)^{-2}\Bigg)
\\ &  = x^2\Pi_{n,0}^2\Pi_n^2\sum_{k=1}^n\Bigg(2\big(k-\tfrac{1}{2})^{-2}-2\big((k-\tfrac{1}{2})^2-x^2\big)^{-1}-x^2\big(k-\tfrac{1}{2}\big)^{-4}
\\ & \quad  +4x^2\big((k-\tfrac{1}{2})^2-x^2\big)^{-1}\sum_{j=1}^n\big((j-\tfrac{1}{2})^2-x^2\big)^{-1}
\\ & \quad -4x^2\big(k-\tfrac{1}{2}\big)^{-2}\sum_{j=k+1}^n\big(j-\tfrac{1}{2}\big)^{-2}\Bigg)=:x^2\Pi_{n,0}^2\Pi_n^2\sum_{k=1}^n \Delta_k,
\end{align*}
where the last sum before the last equality (the one indexed by $j=k+1,...,n$) must to be understood as zero when $k=n$. Then, in order to conclude inequality \eqref{ineq_HH} it is enough to show that $\Delta_k(x)\geq0$ for all $k\in\{1,...n\}$ and all $x\in(-\tfrac{1}{2},\tfrac{1}{2})$. In fact, first of all, recalling the first inequality in \eqref{tech_ident_proof} we deduce that, for all $n\in\N$ with $n\geq 2$, and any $k\in\{1,...,n\}$,  \[
-\sum_{j=k+1}^n\big(j-\tfrac{1}{2}\big)^{-2}\geq -\sum_{j=2}^\infty\big(j-\tfrac{1}{2}\big)^{-2}>-1 \quad \hbox{ and }\quad \sum_{j=1}^n\big(j-\tfrac{1}{2}\big)^{-2}\geq 4,
\]
%Moreover, noticing that for $j=1$ we have $(j-\tfrac{1}{2})^{-2}=4$, it trivially follows that \[
%\sum_{j=1}^n\big(j-\tfrac{1}{2}\big)^{-2}\geq 4.
%&\]
where the latter inequality simply follows by noticing that, when $j=1$, the first factor in the sum above $(j-\tfrac{1}{2})^{-2}=4$. Thus, by plugging the last two inequalities into the definition of $\Delta_k$, it follows \begin{align*}
\Delta_k&\geq 2\big(k-\tfrac{1}{2})^{-2}-2\big((k-\tfrac{1}{2})^2-x^2\big)^{-1}-x^2\big(k-\tfrac{1}{2}\big)^{-4}
\\ & \quad  +16x^2\big((k-\tfrac{1}{2})^2-x^2\big)^{-1}-4x^2\big(k-\tfrac{1}{2}\big)^{-2}=:\widetilde{\Delta}_k.
\end{align*}
Then, since $\widetilde{\Delta}_k(x)$ is even and $\widetilde{\Delta}_k(0)=0$, we infer that, to prove the non-negativity of each $\Delta_k$, it is enough to prove that $\tfrac{d}{dx}\widetilde{\Delta}_k(x)\geq 0$ for all $x\in (0,\tfrac{1}{2})$ and all $k\in\{1,...,n\}$. In fact, first of all, for the sake of simplicity let us start by re-writing $\widetilde{\Delta}_k$ as \begin{align*}
\widetilde{\Delta}_k&:=2\big(k-\tfrac{1}{2})^{-2}+2(8x^2-1)\big((k-\tfrac{1}{2})^2-x^2\big)^{-1}-x^2\big(k-\tfrac{1}{2}\big)^{-2}\big(4+\big(k-\tfrac{1}{2}\big)^{-2}\big). 
\end{align*}
Then, by direct computations we get:
\begin{align*}
\dfrac{d}{dx}\widetilde{\Delta}_k(x)&=\dfrac{4x\big(8(k-\tfrac{1}{2})^2-1\big)}{\big((k-\tfrac{1}{2})^2-x^2\big)^2}-\dfrac{2x\big(4(k-\tfrac{1}{2})^{2}+1\big)}{\big(k-\tfrac{1}{2})^4}=:\dfrac{\mathrm{A}}{\mathrm{B}}-\dfrac{\mathrm{C}}{\mathrm{D}}.
\end{align*}
Notice that $\mathrm{A},\,\mathrm{B},\,\mathrm{C},\,\mathrm{D}\geq0$ for $x\in (0,\tfrac{1}{2})$. Hence, it is enough to prove that \[
\mathrm{A}\geq \mathrm{C}\quad \hbox{and}\quad \mathrm{B}^{-1}\geq \mathrm{D}^{-1}.
\]
We point out that inequality $\mathrm{B}^{-1}\geq \mathrm{D}^{-1}$ follows directly. In fact, recalling that $x\in(-\tfrac{1}{2},\tfrac{1}{2})$, we deduce \[
\dfrac{1}{\mathrm{B}}=\dfrac{1}{\big((k-\tfrac{1}{2})^2-x^2\big)^2}\geq \dfrac{1}{(k-\tfrac{1}{2})^4}=\dfrac{1}{\mathrm{D}}.
\]
Then, it only remains to prove that $\mathrm{A}\geq \mathrm{C}$. In fact, by direct computations it immediately follows that, for $x\in (0,\tfrac{1}{2})$ and $k\in\{1,...,n\}$, we have \[
\mathrm{A}-\mathrm{C}=6x\big(4(k-\tfrac{1}{2})^2-1\big)=24kx\big(k-1\big)\geq0.
\]
Therefore, $\tfrac{d}{dx}\widetilde{\Delta}_k(x)\geq 0$ for all $x\in(0,\tfrac{1}{2})$ and all $k\in \{1,...,n\}$, which concludes the proof.
\end{proof}

\begin{proof}[End of the proof of Theorem \ref{thm_monotonicity}]

We start by pointing out that, by gathering Lemma \ref{mon_lem_1} and \ref{mon_lem_2} together with Proposition \ref{mon_prop_1}, we conclude that, for all $x\in (-\tfrac{1}{2},\tfrac{1}{2})$ it holds: \[
-\dfrac{d^2}{dx^2}\dfrac{V_{n}(x)-V_n(0)}{(V_{n}'(x))^2}=\dfrac{3V_{n}''\big( (V_{n}')^2-2(V_{n}-V_n(0))V_{n}''\big)+2(V_{n}-V_n(0))V_{n}'V_{n}'''}{(V_{n}')^4}\geq 0.
\]
Additionally, it is not difficult to see from the inequalities above that, whenever $x\neq 0$, the latter inequality holds strictly\footnote{It is enough to notice that, for example, some of the previous lemmas are proven by showing that certain even function $f(x)$ satisfies $f(0)=0$ with $f'(x)>0$ for $x\in(0,\tfrac{1}{2})$.}. However, due to the factor $(V_n')^4$, the latter quantity might have a singularity at $x=0$. Thus, it only remains to prove that, when $x$ goes to zero, the latter quantity is well-defined and strictly positive. Notice that this shall conclude the proof of the theorem by applying the main result in \cite{Chi}. Indeed, let us start by pointing out that, from the explicit formula of $V_{n}'$ in \eqref{comp_v_x}, we infer that $(V_{n}')^4=o(x^4)$. Then, we must first prove that $\mathcal{V}_{n}$ is also $o(x^4)$. In order to do this, we start by recalling that $\tfrac{1}{96}\mathcal{V}_n=\mathbf{A}+\mathbf{B}x^2+\mathbf{C}x^4$, where $\mathbf{A}$, $\mathbf{B}$ and $\mathbf{C}$ are given by: \begin{align*}
\mathbf{A}&:=-\big(\Pi_n^2-\Pi_{n,0}^2\big)\Pi_n^2\Sigma_{n-1}^2,
\\ \mathbf{B}&:=2\Pi_{n,0}^2\Pi_n\Sigma_{n-1}^3-4(\Pi_n^2-\Pi_{n,0}^2)\Pi_n^2\Sigma_{n-1}\Sigma_{n-2},
\\ \mathbf{C}&:=4\Pi_{n,0}^2\Sigma_{n-1}^4
+8\Pi_{n,0}^2\Pi_n\Sigma_{n-1}^2\Sigma_{n-2}+8(\Pi_n^2-\Pi_{n,0}^2)\Pi_n^2\Sigma_{n-1}\Sigma_{n-3}
\\ & \quad \ \, -16(\Pi_n^2-\Pi_{n,0}^2)\Pi_n^2\Sigma_{n-2}^2.
\end{align*}
Hence, in the sequel we seek to prove that $\lim_{x\to0}\tfrac{1}{x^4}\mathcal{V}_n(x)$ exists and is strictly positive. We split the analysis into two steps. First, we intend to prove that \begin{align}\label{second_order}
\lim_{x\to 0}\dfrac{1}{x^2}\big(\mathbf{A}+2x^2\Pi_{n,0}^2\Pi_n\Sigma_{n-1}^3\big)=0.
\end{align}
It is worth noticing that, the latter limit ensures us that the quantity inside the parenthesis in \eqref{second_order} behaves (at least) as $x^4$ near zero. In fact, first of all, recall that in the proof of \eqref{improimpro} we have already shown that \begin{align}\label{approx_thetas}
-(\Pi_n^2-\Pi_{n,0}^2)&=2x^2\Pi_{n,0}^2\sum_{k=1}^n\big(k-\tfrac{1}{2}\big)^{-2}\nonumber
\\ & \quad \,-x^4\Pi_{n,0}^2\left(\sum_{k=1}^n\big(k-\tfrac{1}{2}\big)^{-4}+4\sum_{k=1}^{n-1}\sum_{j=k+1}^n\big(k-\tfrac{1}{2}\big)^{-2}\big(j-\tfrac{1}{2}\big)^{-2}\right)+o(x^6)\nonumber
\\ & =:\Theta_{2}(x)+\Theta_{4}(x)+o(x^6),
\end{align}
where $\Theta_2(x)$ and $\Theta_4(x)$ denote the terms of order $x^2$ and $x^4$ respectively. Then, we gather the term in $\mathbf{A}$ associated with $\Theta_2$ with the first term appearing in $\mathbf{B}$. Specifically, we group \begin{align}\label{theta_2}
\Theta_2\Pi_n^2\Sigma_{n-1}^2+2x^2\Pi_{n,0}^2\Pi_n\Sigma_{n-1}^3&=2x^2\Pi_{n,0}^2\Pi_n\Sigma_{n-1}^2\left(\Pi_n\sum_{k=1}^n\big(k-\tfrac{1}{2}\big)^{-2}+\Sigma_{n-1}\right).
\end{align}
Then, it is enough to notice the following trivial identities: \begin{align}\label{lim_sigma_1}
&\lim_{x\to0}\Pi_n=(-1)^{n}\Pi_{n,0} \quad \hbox{and} \quad \lim_{x\to0}\Sigma_{n-1}=(-1)^{n-1}\Pi_{n,0}\sum_{k=1}^n\big(k-\tfrac{1}{2}\big)^{-2}.
\end{align}
By plugging the last two identities into the parenthesis in \eqref{theta_2} we conclude the proof of \eqref{second_order}. In particular, we infer that $
\Theta_2\Pi_n^2\Sigma_{n-1}^2+2x^2\Pi_{n,0}^2\Pi_n\Sigma_{n-1}^3=o(x^4)$. Similarly, now we gather the last term in $\mathbf{B}$ with the second one in $\mathbf{C}$. Specifically, we group (recall that the terms associated to $\mathbf{C}$ in $\mathcal{V}_n$ have an extra $x^2$ with respect to the ones in $\mathbf{B}$): \begin{align}\label{groupsII}
-4(\Pi_n^2-\Pi_{n,0}^2)\Pi_n^2\Sigma_{n-1}\Sigma_{n-2}+8x^2\Pi_{n,0}^2\Pi_n\Sigma_{n-1}^2\Sigma_{n-2}.
\end{align}
However, by using the second identity in \eqref{lim_sigma_1} and \eqref{approx_thetas} again, it is easy to see that \[
\lim_{x\to 0}\dfrac{1}{x^2}\big(4(\Pi_n^2-\Pi_{n,0}^2)\Pi_n^2\Sigma_{n-1}\Sigma_{n-2}-8x^2\Pi_{n,0}^2\Pi_n\Sigma_{n-1}^2\Sigma_{n-2}\big)=0.
\]
Hence, due to the extra $x^2$ factor, the terms appearing in $\mathcal{V}_n$ associated to \eqref{groupsII} are of order $o(x^6)$. It is worth to notice that, except for the first term in $\mathbf{C}$, all the remaining terms appearing in $\mathcal{V}_n$ that we have not treated so far are of order $o(x^6)$. Consequently, the problem is reduced to study the following limit: \[
\lim_{x\to 0}\dfrac{1}{x^4}\big((\Theta_2+\Theta_4)\Pi_n^2\Sigma_{n-1}^2+2x^2\Pi_{n,0}^2\Pi_n\Sigma_{n-1}^3+4x^4\Pi_{n,0}^2\Sigma_{n-1}^4\big).
\]
For the sake of simplicity let us start by some direct computations. In fact, on the one-hand we have: \begin{align*}
\Pi_n=\widehat{a}_0+\widehat{a}_1x^2+o(x^4), \quad  &\hbox{where} \quad \widehat{a}_1=\Sigma_{n-1}(0),
\\ \Sigma_{n-1}=\widetilde{a}_0+\widetilde{a}_1x^2+o(x^4), \quad &\hbox{where}\quad \widetilde{a}_1=2\Sigma_{n-2}(0).
\end{align*}
Thus, gathering \eqref{theta_2} with the last identities we infer \begin{align}\label{theta2_x4}
&\lim_{x\to0}\dfrac{1}{x^4}\big(\Theta_2\Pi_n^2\Sigma_{n-1}^2+2x^2\Pi_{n,0}^2\Pi_n\Sigma_{n-1}^3\big)=\nonumber
\\ & \qquad =2(-1)^n\Pi_{n,0}^3\Sigma_{n-1}^2(0)\left(\Sigma_{n-1}(0)\sum_{k=1}^n\big(k-\tfrac{1}{2}\big)^{-2}+2\Sigma_{n-2}(0)\right).
\end{align}
On the other hand, by taking limit directly in the definition of $\Theta_4(x)$ we obtain
\[
\lim_{x\to0}\dfrac{1}{x^4}\Theta_4(x)\Pi_n^2\Sigma_{n-1}^2=-\Pi_{n,0}^4\Sigma_{n-1}^2(0)\left(\sum_{k=1}^n\big(k-\tfrac{1}{2}\big)^{-4}+4\sum_{k=1}^{n-1}\sum_{j=k+1}^n\big(k-\tfrac{1}{2}\big)^{-2}\big(j-\tfrac{1}{2}\big)^{-2}\right).
\]
Finally, we gather the previous terms, that is, we gather the parenthesis in \eqref{theta2_x4} with the terms associated with the latter limit and the first term associated with $\mathbf{C}(x)$. More specifically, we group \begin{align*}
(\Theta_2+\Theta_4)\Pi_n^2\Sigma_{n-1}^2+2x^2\Pi_{n,0}^2\Pi_n\Sigma_{n-1}^3+4x^4\Pi_{n,0}^2\Sigma_{n-1}^4.
\end{align*}
By using the last two limits, identity \eqref{lim_sigma_1} once again, and then performing  some direct cancellations we obtain \begin{align*}
&\lim_{x\to0}\dfrac{1}{x^4}\left((\Theta_2+\Theta_4)\Pi_n^2\Sigma_{n-1}^2+2x^2\Pi_{n,0}^2\Pi_n\Sigma_{n-1}^3+4x^4\Pi_{n,0}^2\Sigma_{n-1}^4\right)=
\\ & \qquad = \Pi_{n,0}^4\Sigma_{n-1}^2(0)\sum_{k=1}^n\big(k-\tfrac{1}{2}\big)^{-4}+4(-1)^n\Pi_{n,0}^3\Sigma_{n-1}^2(0)\Sigma_{n-2}(0).
\end{align*}
Finally, since $(-1)^n\Sigma_{n-2}(0)>0$, we conclude the proof of the theorem.
\end{proof}

\medskip

\section{Spectral analysis \label{sec:SpecAna}}

In this section, we use the monotonicity of the period map $L$ with respect to the energy level $\beta$ to analyze
the spectrum of the linearized operator associated with the traveling wave
obtained in the previous section. From now on, with no loss of generality and in addition to the hypothesis in Theorem \ref{thm_monotonicity}, we shall assume $0\leq c<1$.

\subsection{Spectrum of the scalar linearized operator}

Our goal now is to study the spectral information associated to the scalar linear operator $\mathcal{L}_c$. Let us start by recalling that the odd traveling wave solution constructed in the last section satisfies
\begin{align}\label{trav_eq}
-\omega\phi_{c}'' +V_{n}'\left(\phi_{c}\right)=0,
\end{align}
where $\omega=1-c^2$. Then, the linearized operator $\mathcal{L}_{c}$ around $\phi_c$ is given by: \begin{align}\label{scalar_linear_op}
\mathcal{L}_{c}:=-\omega \partial_x^2+V_{n}''\left(\phi_{c}\right).
\end{align}
It is worth  noticing that $\mathcal{L}_c$ can be regarded as a bounded self-adjoint operator defined on 
$L^{2}\left(\mathbb{T}_{L}\right)$ with domain $H^{2}\left(\mathbb{T}_{L}\right)$. According to Oscillation Theorem, see Magnus-Winkler \cite{MaWi}, the spectrum of $\mathcal{L}_{c}$ is formed by a sequence of real numbers, bounded from below and going to $+\infty$ as $m$ goes to $+\infty$. More specifically, we can list the eigenvalues of $\mathcal{L}_c$ as
\[
\sigma_{0}<\sigma_{1}\leq\sigma_{2}<\sigma_{3}\leq\sigma_{4}<\cdots<\sigma_{2m-1}\leq\sigma_{2m}<\cdots.
\]
Moreover, the spectrum of $\mathcal{L}_{c}$ is also characterized by the number
of zeros of the corresponding eigenfunctions. Then, in order to analyze the (in)stability problem of traveling wave solutions, it is helpful to start studying the spectrum of $\mathcal{L}_c$ in more details.  We start by recalling two results of Floquet theory that shall be useful in the sequel.

\begin{thm}[\cite{N}, Theorem 2.2] \label{thm:Neves1} Let $p(x)$ be any $L$-periodic solution of $\mathcal{L}_{c}y=0$. Consider any other  linearly independent solution $y(x)$ such that the Wronskian $W(p,y)$ satisfies  
\[
W(p,y):=\det\left(\begin{matrix}
p & y
\\ p_{x} & y_{x}
\end{matrix}\right)=1.
\]
Then, $y(x+L)=y(x)+\theta p(x)$ for some constant only depending on $y$. In particular $y(x)$ is $L$-periodic if and only if $\theta=0$.
\end{thm}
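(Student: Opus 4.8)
The plan is to exploit that $\mathcal{L}_c y = 0$ is a Hill-type equation, namely the second-order linear ODE
\[
-\omega y'' + V_n''(\phi_c)\, y = 0,
\]
whose only non-constant coefficient, $V_n''(\phi_c(x))$, is $L$-periodic because $\phi_c$ is. The first observation I would record is that, since this coefficient is $L$-periodic, the translate $x \mapsto y(x+L)$ of any solution is again a solution of the same equation: this is immediate upon replacing $x$ by $x+L$ in the equation and using $V_n''(\phi_c(x+L)) = V_n''(\phi_c(x))$. Because $p$ and $y$ are linearly independent (their Wronskian equals $1 \neq 0$), they span the two-dimensional solution space, so I may write
\[
y(x+L) = a\, p(x) + b\, y(x)
\]
for some constants $a,b\in\R$ still to be identified.

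To pin down $b$ I would use the behaviour of the Wronskian. Since the operator \eqref{scalar_linear_op} has no first-order term, $W(p,y) = p y' - p' y$ is constant in $x$, and by hypothesis equal to $1$. The key computation is to evaluate the Wronskian of $p$ against the shifted solution $y(\cdot+L)$: using that $p$ is $L$-periodic, so $p(x) = p(x+L)$ and $p'(x) = p'(x+L)$, one finds
\[
W\big(p(\cdot),\, y(\cdot+L)\big)(x) = p(x+L)\, y'(x+L) - p'(x+L)\, y(x+L) = W(p,y)(x+L) = 1.
\]
On the other hand, inserting the decomposition above and using bilinearity and the alternating property of the Wronskian, $W(p,\, a p + b y) = a\, W(p,p) + b\, W(p,y) = b$. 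Comparing the two expressions yields $b=1$, so that $y(x+L) = y(x) + \theta\, p(x)$ with $\theta := a$, a constant depending only on $y$ once $p$ is fixed. The final equivalence is then immediate: $y$ is $L$-periodic if and only if $y(x+L)-y(x)=\theta\, p(x)$ vanishes identically, and since $p$ is a nontrivial solution this happens if and only if $\theta=0$.

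I do not expect a serious obstacle here. The only two points requiring care are the constancy of the Wronskian (which rests on the absence of a first-order term in $\mathcal{L}_c$, visible by inspection of \eqref{scalar_linear_op}) and the invariance of the solution space under the period shift (which rests on the $L$-periodicity of $V_n''(\phi_c)$). The slickest step, and the one I would present as the heart of the argument, is the identity $W\big(p,\, y(\cdot+L)\big) = W(p,y)(\cdot+L)$, which converts the determination of the coefficient $b$ into a one-line Wronskian computation rather than a direct manipulation of the ODE.
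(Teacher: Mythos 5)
Your proof is correct. Note that the paper itself gives no proof of this statement --- it is quoted as Theorem 2.2 of Neves \cite{N} --- so there is no internal argument to compare against; your proof (translation invariance of the Hill-type equation $\mathcal{L}_c y=0$, expansion of $y(\cdot+L)$ in the basis $\{p,y\}$, and identification of the coefficient $b=1$ from the constancy of the Wronskian combined with the $L$-periodicity of $p$) is exactly the standard Floquet-theory argument, essentially the one found in \cite{N}.
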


\begin{rem}
We point out that the constant $\theta$ can be explicitly computed (see \cite{N}).
\end{rem}

%One can apply results above and information of $\theta $ to study the eigenvalue problems.

\begin{thm}[\cite{N}, Theorem 3.1]\label{thm:Neves2} Consider any eigenvalue  $\sigma_{k}$ of $\mathcal{L}_c$ with $k\geq1$, and its associated eigenfunction $\widetilde{p}(x)$. Let $\theta$ be the constant given in Theorem \ref{thm:Neves1} associated to the operator $\widetilde{\mathcal{L}}:=(\mathcal{L}_{c}-\sigma_{k})$ and $p(x)=\widetilde{p}(x)$. Then, $\sigma_{k}$ is a simple eigenvalue of $\mathcal{L}_c$ if and only if $\theta\neq0$. Furthermore, if $p(x)$ has $2m$-zeros in $[0,L)$, then the following holds: \[
\{\hbox{if } \theta<0, \hbox{ then } \sigma_{k}=\sigma_{2m-1}\} \qquad \hbox{and} \qquad  \{\hbox{if } \theta>0, \hbox{ then }\sigma_{k}=\sigma_{2m}\}.\]
\end{thm}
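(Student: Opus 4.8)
The statement is classical Floquet/Hill theory (it is exactly Neves' Theorem 3.1), so the plan is to reconstruct the proof from the monodromy matrix. The first assertion is the quick part. Since $\mathcal{L}_c$ is self-adjoint on $\mathbb{T}_L$, the geometric and algebraic multiplicities of $\sigma_k$ coincide, so $\sigma_k$ fails to be simple exactly when $\widetilde{\mathcal{L}}y=0$ admits two linearly independent $L$-periodic solutions. Given the periodic eigenfunction $p=\widetilde{p}$, any independent solution $y$ normalized by $W(p,y)=1$ is $L$-periodic if and only if the constant $\theta$ from Theorem \ref{thm:Neves1} vanishes; hence $\sigma_k$ is simple iff $\theta\neq0$.

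For the second assertion I would pass to the monodromy matrix $M(\lambda)$ of $(\mathcal{L}_c-\lambda)u=0$ and its discriminant $\Delta(\lambda):=\operatorname{tr}M(\lambda)$, whose roots of $\Delta=2$ are precisely the periodic eigenvalues. Writing $v_p=(p(0),p'(0))^{\top}$ and $v_y=(y(0),y'(0))^{\top}$, one has $(M(\sigma_k)-I)v_p=0$, and $\langle v_p,v_y\rangle=W(p,y)=1$ for the symplectic form $\langle u,w\rangle=\det(u\,|\,w)$, while the relation $y(x+L)=y(x)+\theta p(x)$ translates into $(M(\sigma_k)-I)v_y=\theta v_p$. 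A first-order Jordan-block perturbation of $M$ at the double Floquet multiplier $\rho=1$ then yields $\dot\Delta(\sigma_k)=-\theta\,\langle \dot M(\sigma_k)v_p,v_p\rangle$, where $\dot{}=d/d\lambda$. The key computation is to evaluate $\langle \dot M(\sigma_k)v_p,v_p\rangle$: differentiating $\mathcal{L}_c u=\lambda u$ in $\lambda$ gives $(\mathcal{L}_c-\sigma_k)\dot u=p$ with zero initial data, and the Wronskian $W(\dot u,p)$ satisfies $\frac{d}{dx}W(\dot u,p)=\frac{1}{\omega}p^2$; integrating over $[0,L]$ and using periodicity of $p$ shows $\langle \dot M(\sigma_k)v_p,v_p\rangle=W(\dot u,p)(L)=\frac{1}{\omega}\|p\|_{L^2}^2>0$ (recall $\omega>0$). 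Consequently $\sgn\theta=-\sgn\dot\Delta(\sigma_k)$.

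It then remains to read off the index from the sign of $\dot\Delta$. By the oscillation/band-edge structure of Magnus--Winkler \cite{MaWi}, at a simple periodic eigenvalue whose eigenfunction has $2m$ zeros the discriminant crosses the level $2$ transversally, and the interlacing of the periodic and antiperiodic spectra forces $\dot\Delta(\sigma_{2m-1})>0$ (lower edge of the periodic pair) and $\dot\Delta(\sigma_{2m})<0$ (upper edge). Combining with $\sgn\theta=-\sgn\dot\Delta$ gives $\theta<0\Leftrightarrow\sigma_k=\sigma_{2m-1}$ and $\theta>0\Leftrightarrow\sigma_k=\sigma_{2m}$, as claimed. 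The main obstacle is the careful sign bookkeeping in the Jordan-block perturbation together with the orientation conventions for $W(p,y)$ and $\langle\cdot,\cdot\rangle$; a subtlety worth settling first is that $\sgn\theta$ is genuinely normalization-independent (under $p\mapsto cp$, $y\mapsto y/c$ one computes $\theta\mapsto \theta/c^{2}$), which is exactly what makes the sign statement well posed.
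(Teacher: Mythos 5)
Your proposal addresses a statement that the paper itself does not prove: Theorem \ref{thm:Neves2} is quoted verbatim from Neves \cite{N} (Theorem 3.1 there) and used as a black box, so the only proof to compare against is the one in that reference --- and your reconstruction is, in substance, that Floquet-theoretic proof. It is correct. The simplicity claim is immediate, as you say: the solution space of the second-order ODE $\widetilde{\mathcal{L}}y=0$ is two-dimensional, and by Theorem \ref{thm:Neves1} the second solution $y$ is $L$-periodic exactly when $\theta=0$. Your key identity $\dot\Delta(\sigma_k)=-\theta\,\langle\dot M(\sigma_k)v_p,v_p\rangle$ is right, and can be checked more cleanly than by a Jordan-block perturbation: in the basis $(v_p,v_y)$ one has $Mv_p=v_p$ and $Mv_y=v_y+\theta v_p$, and differentiating $\det M(\lambda)\equiv 1$ in $\lambda$ gives $\operatorname{tr}\bigl(M^{-1}\dot M\bigr)=0$, hence $\operatorname{tr}\dot M=\theta b$, where $b$ is the $v_y$-component of $\dot M v_p$; since $\langle\dot Mv_p,v_p\rangle=b\,\langle v_y,v_p\rangle=-b$, this is exactly your formula. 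Your Wronskian evaluation $\langle\dot M(\sigma_k)v_p,v_p\rangle=W(\dot u,p)(L)=\tfrac{1}{\omega}\|p\|_{L^2}^2>0$ is also correct (note this is where $\omega>0$, i.e.\ subluminality, enters), and the band-edge facts you import --- $\dot\Delta$ increasing through $+2$ at $\sigma_{2m-1}$, decreasing through $+2$ at $\sigma_{2m}$, with eigenfunctions at both edges of the $m$-th gap having exactly $2m$ zeros --- are indeed the classical oscillation picture in Magnus--Winkler \cite{MaWi}; combining with $\sgn\theta=-\sgn\dot\Delta(\sigma_k)$ closes the argument, and the dichotomy follows because the two cases are mutually exclusive. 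One remark worth keeping: your closing observation that $\sgn\theta$ hinges on the orientation convention $W(p,y)=+1$ is precisely the pitfall this paper flags in Section \ref{ext_nata}, where switching the order of the Wronskian entries in \cite{deNa} produced the erroneous relation $\theta=\frac{dL}{d\beta}$ in place of $\theta=-\frac{dL}{d\beta}$; your self-contained derivation makes transparent why that sign is not a matter of convention once the Wronskian normalization is fixed.
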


It is worthwhile to notice that, by differentiating the equation \eqref{trav_eq}, we infer that $\phi'_{c}$ belongs to $\ker(\mathcal{L}_{c})$, and hence zero is an eigenvalue of $\mathcal{L}_c$. Next we apply both theorems above to analyze the $0$ eigenvalue of
$\mathcal{L}_{c}$. By Theorem \ref{thm:Neves1}, for any solution to
$\mathcal{L}_{c}y=0$ linearly independent to $\phi_c'$ satisfying $W\left(\phi'_{c},y\right)=1$, one has
\begin{align}\label{eq_theta}
y\left(x+L\right)=y\left(x\right)+\theta\phi_{c}'\left(x\right),
\end{align}
for some constant $\theta$ only depending on $y$. Moreover, it is not difficult to see\footnote{From equation \eqref{trav_eq}, the oddness of the solution and the fact that $-(V(x)-V(0))$ is strictly increasing in $(0,\tfrac{1}{2})$, for example.} that $\phi'_{c}(x)$ has exactly two zeros $[0,L)$. Thus, by applying the Oscillation Theorem, we know that $0$ is either $\sigma_1$ or $\sigma_2$. To obtain more precise information of the eigenvalue $0$, by Theorem \ref{thm:Neves2}, we need to know $\theta$.  The next lemma connects $\theta$ and $\frac{\partial}{\partial\beta}L$ computed from Theorem \ref{thm_monotonicity}.
\begin{lem}
	\label{lem:thetaLbeta}
	Under our current hypothesis, we have the following relation between $\frac{d L}{d\beta}$ from Theorem \ref{thm_monotonicity}
	and $\theta$ from \eqref{eq_theta}:
\begin{align*}
	\theta=-\frac{\partial L}{\partial\beta}.
\end{align*}
\end{lem}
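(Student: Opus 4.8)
The plan is to identify the second solution $y$ in Theorem \ref{thm:Neves1} explicitly as the derivative of the wave profile with respect to the energy parameter $\beta$. Recall from Section \ref{existence} that $\phi_c$ belongs to a smooth one-parameter family $\phi(\cdot\,;\beta)$ of solutions of the profile equation $\omega\phi''=V_n'(\phi)$, with $\omega=1-c^2$ held fixed, indexed by the energy level $\beta$ of the Hamiltonian \eqref{hamilt} and normalized by $\phi(0;\beta)=0$. Differentiating the profile equation with respect to $\beta$ gives $\omega\,\partial_x^2(\partial_\beta\phi)=V_n''(\phi)\,\partial_\beta\phi$, that is, $\mathcal{L}_c(\partial_\beta\phi)=0$. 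Hence $\partial_\beta\phi_c$ is a solution of $\mathcal{L}_c y=0$; its linear independence from the periodic solution $\phi_c'$ will be a by-product of the Wronskian computation below.

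Next I would extract the Floquet relation by differentiating the periodicity identity. Since $\phi(\cdot\,;\beta)$ has fundamental period $L=L(\beta)$, we have $\phi\big(x+L(\beta);\beta\big)=\phi(x;\beta)$ for all $x$. Differentiating this identity in $\beta$ and using that $\phi_c'$ is itself $L$-periodic, so that $\phi'\big(x+L;\beta\big)=\phi'(x;\beta)$, yields
\[
\partial_\beta\phi\big(x+L;\beta\big)=\partial_\beta\phi(x;\beta)-L'(\beta)\,\phi_c'(x).
\]
Comparing with \eqref{eq_theta}, namely $y(x+L)=y(x)+\theta\,\phi_c'(x)$, this shows that the choice $y=\partial_\beta\phi_c$ produces exactly $\theta=-L'(\beta)=-\partial L/\partial\beta$.

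It then remains to verify that $y=\partial_\beta\phi_c$ is correctly normalized, i.e.\ $W(\phi_c',\partial_\beta\phi_c)=1$. Since the equation $\mathcal{L}_c y=0$ has no first-order term, the Wronskian of any two of its solutions is constant in $x$, so it suffices to evaluate it at $x=0$. From $\phi(0;\beta)=0$ for all $\beta$ we get $\partial_\beta\phi(0;\beta)=0$, while the energy normalization $\tfrac12\phi'(0;\beta)^2=\beta$, obtained by evaluating \eqref{hamilt} at the point $u=0$ on the orbit, gives $\phi'(0;\beta)=\sqrt{2\beta}$ and hence $\partial_x\partial_\beta\phi(0;\beta)=\partial_\beta\sqrt{2\beta}=1/\sqrt{2\beta}$. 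Therefore
\[
W(\phi_c',\partial_\beta\phi_c)(0)=\phi_c'(0)\,\partial_x\partial_\beta\phi(0)-\phi_c''(0)\,\partial_\beta\phi(0)=\sqrt{2\beta}\cdot\tfrac{1}{\sqrt{2\beta}}-0=1.
\]
Finally, since the normalized second solution is unique up to addition of a multiple of the periodic function $\phi_c'$, and such an addition changes neither the Wronskian nor the constant $\theta$ in \eqref{eq_theta} (because $\phi_c'$ is $L$-periodic), the value $\theta=-\partial L/\partial\beta$ computed for $y=\partial_\beta\phi_c$ is precisely the constant asserted in the lemma.

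The computational steps are routine; the substance of the argument is the recognition that $\partial_\beta\phi_c$ is the relevant non-periodic solution of the kernel equation, so that the failure of periodicity across one period is governed exactly by $L'(\beta)$. The only points requiring care are the smoothness of $\beta\mapsto\phi(\cdot\,;\beta)$, so that $\partial_\beta\phi_c$ exists and inherits the profile equation, which follows from smooth dependence of ODE solutions on the initial datum $\phi'(0;\beta)=\sqrt{2\beta}$ together with the smoothness established in Theorem \ref{thm_monotonicity}; and the constancy of the Wronskian, which is what permits reading off the normalization at the convenient base point $x=0$.
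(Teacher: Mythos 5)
Your proof is correct and follows essentially the same route as the paper: both arguments identify the second, non-periodic element of $\ker(\mathcal{L}_c)$ as $\partial_\beta\phi_c$, normalize it via the Wronskian at $x=0$ using $\phi_c(0;\beta)=0$ and $\tfrac12\phi_{c,x}^2(0;\beta)=\beta$, and read off $\theta=-\partial_\beta L$. The only organizational difference is that the paper introduces an auxiliary solution $\mu$ through an initial value problem, identifies $\mu\equiv\partial_\beta\phi_c$ by ODE uniqueness, and evaluates the Floquet relation at a single point, whereas you obtain the relation for all $x$ at once by differentiating the periodicity identity $\phi\big(x+L(\beta);\beta\big)=\phi(x;\beta)$ in $\beta$ --- a slightly more streamlined derivation of the same fact.
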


\begin{proof}
Our proof follows a similar spirit to that given in \cite{deNa}. However, notice that this latter one contains some typos that must to be corrected (see Section \ref{ext_nata} for further details). In fact, let us start by defining $\mu$ to be the unique solution to the problem
\begin{align*}
	\begin{cases}
	-\omega\mu''+V_{n}''(\phi_{c})\mu=0\\
	\mu(0)=0, \ \, \mu' (0)=\frac{1}{\phi_{c}'\left(0\right)} & .
	\end{cases}
\end{align*}
Notice that by  the definition of $\mu$ it immediately follows that $W(\phi'_{c},\mu)=1$. Then by Theorem \ref{thm:Neves1}, there is a constant $\theta$, only depending on $\mu$, such that
\begin{align*}
	\mu(x+L)=\mu(x)+\theta\phi_{c}'(x).
\end{align*}
Therefore, by evaluating the latter identity at $x=0$, recalling that by construction $\mu(0)=0$, we deduce that $\theta=(\phi_{c}'(0))^{-1}\mu(L)$. On the other hand, since $\phi_{c}$ is odd and periodic it follows that $\phi_{c}\left(0\right)=\phi_{c}\left(L\right)=0$. Thus, by differentiating the latter identity at $x=L$ with respect to $\beta$, we deduce:
\begin{align}\label{dLdb_2}
\phi_{c}'(L)\frac{\partial L}{\partial\beta}+\frac{\partial\phi_{c}}{\partial\beta}(L)=0 	\quad\implies 	\quad 	\frac{\partial L}{\partial\beta}=-\frac{1}{\phi_{c}'(0)}\frac{\partial\phi_{c}}{\partial\beta}(L),
\end{align}
where we have used the periodicity of the solution $\phi'_c(0)=\phi'_{c}(L)$. Finally, in order to obtain the relation between $\partial_{\beta}\phi_{c}$ and $\mu$, we start by recalling that, from Theorem \ref{thm_monotonicity} we know that for $L$ and $c$ fixed, there exist a unique $\beta(c)\in(0,E_\star)$ such that \begin{align}\label{ham_energy_beta_lvl}
\dfrac{1}{2}\phi_x^2(x)-\dfrac{1}{\omega}\big(V_n(\phi(x))-V_n(0)\big)=\beta.
\end{align} 
Hence, by differentiating the latter equation with respect to $\beta$, and then differentiating the resulting equation with respect to $x$ we obtain
\begin{align*}
	&\omega\phi_{c}''\partial_{\beta}\phi'_{c}+\omega\phi_{c}'\partial_{\beta}\phi_{c}''-V_{n}''\left(\phi_{c}\right)\phi_{c}'\partial_{\beta}\phi_{c}-V_{n}'\left(\phi_{c}\right)\partial_{\beta}\phi_{c}'\nonumber 
\\ & \quad =\partial_{\beta}\phi_{c}'\left(\omega\phi_{c}''-V_{n}'\left(\phi_{c}\right)\right)+\phi_{c}'\left(\omega\partial_{\beta}\phi_{c}''-V_{n}''\left(\phi_{c}\right)\partial_{\beta}\phi_{c}\right)=0.
	\end{align*}
Now, on the one-hand, by differentiating equation \eqref{trav_eq} with respect to $\beta$ we have
\begin{align}\label{phi_beta_2}
\omega\partial_{\beta}\phi_{c}''-V_{n}''\left(\phi_{c}\right)\partial_{\beta}\phi_{c}=0.\end{align}
On the other hand, evaluating identity \eqref{ham_energy_beta_lvl} at $x=0$, recalling that due to the oddness of the solution $\phi_c(0)=0$, we infer that:
\begin{align}\label{phibeta_origin}
	\partial_{\beta}\phi_{c}\left(0\right)=0\quad \text{ and }\quad \frac{1}{2}\phi_{c,x}^{2}\left(0\right)=\beta.
\end{align}
Finally, differentiating the second identity in \eqref{phibeta_origin} with respect to $\beta$ we infer that $\partial_{\beta}\phi_{c,x}(0)=(\phi_{c}'(0)\big)^{-1}$. Therefore, by gathering the latter identity with \eqref{phi_beta_2} and \eqref{phibeta_origin}, we conclude that $\partial_{\beta}\phi_{c}$ satisfies the same ODE as $\mu$ with the same initial data. By the uniqueness of the solution, it follows that $\partial_{\beta}\phi_{c}\equiv\mu$. Therefore, recalling that we have shown that $\theta=(\phi_c'(0))^{-1}\mu(L)$, together with the second identity in \eqref{dLdb_2}, we conclude $\theta=-\partial_{\beta}L$ as desired.
\end{proof}

By computations from Section \ref{existence}, see Theorem \ref{thm_monotonicity}, we know that $\theta=-\partial_{\beta}L<0$. Then as a direct application of Theorem \ref{thm:Neves2}, we can conclude the following spectral information
of $\mathcal{L}_{c}$.
\begin{prop}
	\label{prop:linearspec}
	Under the hypothesis of Theorem \ref{thm_monotonicity}, the linear operator $\mathcal{L}_{c}$ given by \eqref{scalar_linear_op} above, defined
	on $L^{2}\left(\mathbb{T}_{L}\right)$ with domain $H^{2}\left(\mathbb{T}_{L}\right)$, defines a bounded self-adjoint operator with exactly one negative
	eigenvalue, a simple eigenvalue at zero and the rest of its spectrum
	is positive, discrete and bounded away from zero.
\end{prop}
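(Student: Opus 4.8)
The plan is to read off the full spectral picture directly from the Floquet/oscillation framework already assembled, so that the proof becomes an assembly of the preceding results rather than a fresh computation. First I would record the structural facts that come for free: since $\phi_c\in H^\infty(\mathbb{T}_L)$ and $V_n''$ is a polynomial, the potential $V_n''(\phi_c)$ is a smooth $L$-periodic (hence bounded) function, so $\mathcal{L}_c=-\omega\partial_x^2+V_n''(\phi_c)$ is a standard Hill-type operator. It is therefore bounded from below, self-adjoint on $L^2(\mathbb{T}_L)$ with domain $H^2(\mathbb{T}_L)$, and by the Oscillation Theorem of Magnus--Winkler its spectrum is a discrete increasing sequence $\sigma_0<\sigma_1\le\sigma_2<\sigma_3\le\cdots\to+\infty$, with the usual zero-counting characterization of the corresponding eigenfunctions.

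Next I would locate the zero eigenvalue. Differentiating \eqref{trav_eq} shows that $\phi_c'\in\ker(\mathcal{L}_c)$, so $0$ is an eigenvalue; and, as already observed, $\phi_c'$ has exactly two zeros in $[0,L)$. By the zero-counting part of the Oscillation Theorem this forces $0\in\{\sigma_1,\sigma_2\}$. To decide which, I would invoke Theorem \ref{thm:Neves2} with $p=\phi_c'$: the constant $\theta$ attached to this eigenfunction equals $-\partial_\beta L$ by Lemma \ref{lem:thetaLbeta}, and Theorem \ref{thm_monotonicity} guarantees $\partial_\beta L>0$, hence $\theta<0$. Since $\theta\neq 0$, Theorem \ref{thm:Neves2} first tells us that $0$ is a \emph{simple} eigenvalue; and since $\phi_c'$ has $2m=2$ zeros together with $\theta<0$, the same theorem identifies it as $\sigma_{2m-1}=\sigma_1$.

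Finally I would harvest the sign information. From $\sigma_0<\sigma_1=0$ we conclude that $\sigma_0$ is strictly negative; being the ground-state eigenvalue it is always simple, so it is the unique negative eigenvalue. Simplicity of $\sigma_1$ upgrades the weak inequality $\sigma_1\le\sigma_2$ to the strict one $\sigma_1<\sigma_2$ (the pair $\sigma_1\le\sigma_2$ coincides only at a double eigenvalue), so $\sigma_2>0$; consequently $0<\sigma_2\le\sigma_3\le\cdots\to+\infty$, which is precisely the statement that the remainder of the spectrum is positive, discrete, and bounded away from zero.

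I expect no serious obstacle inside this particular proof: the genuinely hard analytic input, the strict monotonicity $\partial_\beta L>0$, is already in hand from Theorem \ref{thm_monotonicity}, as is the bridge identity $\theta=-\partial_\beta L$ from Lemma \ref{lem:thetaLbeta}. The only point that demands care is the bookkeeping that simplicity of $\sigma_1$ promotes $\sigma_1\le\sigma_2$ to $\sigma_1<\sigma_2$; this is exactly what rules out a second nonpositive eigenvalue and secures the spectral gap above zero, and hence the ``exactly one negative eigenvalue'' assertion.
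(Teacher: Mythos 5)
Your proposal is correct and follows essentially the same route as the paper: the paper also deduces the result by combining the Oscillation Theorem with the facts that $\phi_c'\in\ker(\mathcal{L}_c)$ has exactly two zeros in $[0,L)$, that $\theta=-\partial_\beta L$ (Lemma \ref{lem:thetaLbeta}), and that $\partial_\beta L>0$ (Theorem \ref{thm_monotonicity}), so that Theorem \ref{thm:Neves2} forces $0=\sigma_1$ to be simple. Your final bookkeeping step (simplicity of $\sigma_1$ giving $\sigma_1<\sigma_2$, hence the gap above zero and the uniqueness of the negative eigenvalue) is exactly the implicit content of the paper's ``direct application of Theorem \ref{thm:Neves2}.''
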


\subsection{Spectrum of the matrix operator}

Now we seek to use the previous spectral information for the scalar operator $\mathcal{L}_c$ to conclude related spectral properties associated to the so called linarized Hamiltonian. In fact, we start by pointing out that the equation solved by the periodic traveling wave solution $\vec{\phi}_c$ constructed in Section \ref{existence} can be re-written in terms of the conserved functionals $\mathcal{E}$ and $\mathcal{P}$ as
\[
\mathcal{E}'\big(\vec{\phi}_{c}\big)+c\mathcal{P}'\big(\vec{\phi}_{c}\big)=0,
\]
where $\mathcal{E}'$ and $\mathcal{P}'$ are the Frechet derivatives
of $\mathcal{E}$ and $\mathcal{P}$ in $H^{1}\left(\mathbb{T}_{L}\right)\times L^{2}\left(\mathbb{T}_{L}\right)$ respectively. Then, the linearized Hamiltonian around $\vec{\phi}_{c}$ is given by the matrix operator
\begin{equation}
\vec{\mathcal{L}}_{c}:=(\mathcal{E}''+c\mathcal{P}'')\big(\vec{\phi}_c\big)=\left(\begin{matrix}
-\partial_{xx}+V''_{n}\left(\phi_{c}\right) & -c\partial_{x}\\
c\partial_{x} & 1
\end{matrix}\right).\label{eq:mL}
\end{equation}
It is worthwhile to notice that $\vec{\mathcal{L}}_{c}$ can be regarded as a bounded self-adjoint operator defined on \[
\vec{\mathcal{L}}_c:H^{2}\left(\mathbb{T}_{L}\right)\times H^{1}\left(\mathbb{T}_{L}\right)\subset L^{2}\left(\mathbb{T}_{L}\right)\times L^{2}\left(\mathbb{T}_{L}\right)\to L^2(\T_L).
\]
Moreover, notice that with these definitions it immediately follows that $\vec{\phi}_{c,x}$ belongs to the kernel of $\vec{\mathcal{L}}_{c}$. On the other hand, the quadratic form $\mathcal{Q}_c$ associated to the matrix operator defined in  \eqref{eq:mL} is given by:
\begin{align}
\mathcal{Q}_{c}\big(\varphi_1, \varphi_2\big) & :=\big\langle \vec{\mathcal{L}}_{c}(\varphi_1, \varphi_2),(\varphi_1, \varphi_2)\big\rangle =\int_{\mathbb{T}_{L}}\big(\varphi_{1,x}^{2}+V_{n}''\left(\phi_{c}\right)\varphi_1^{2}+2c\varphi_{1,x}\varphi_2+\varphi_2^{2}\big)\,dx\nonumber \\
& %=\int_{\mathbb{T}_{L}}\left(1-c^{2}\right)\left(u_{x}\right)^{2}+V_{n}''\left(\phi%{c}\right)u^{2}+\left(cu_{x}-v\right)^{2}\,dx\label{eq:MQ}\\
=\int_{\mathbb{T}_{L}}\big(\omega\varphi_{1,x}^{2}+V_{n}''\left(\phi_{c}\right)\varphi_1^{2}\big)dx+\int_{\T_L}\big(c\varphi_{1,x}+\varphi_2\big)^{2}\,dx.\label{eq:MQ} 
\end{align}
It is worth noticing that, from the first integral term on the latter identity we recognize the scalar quadratic
form \begin{align}
Q_c(\varphi) :=\left\langle\mathcal{L}_c \varphi,\varphi\right\rangle=\omega\int_{\mathbb{T}_{L}}\varphi_{x}^{2}dx+\int_{\T_L}V_{n}''\left(\phi_{c}\right)\varphi^{2}dx,\label{eq:Q}
\end{align}
which is the quadratic form associated to the linear operator $\mathcal{L}_{c}$ in \eqref{scalar_linear_op}. The following lemma links the spectral information of $\mathcal{L}_c$ derived in the previous subsection with the one of $\vec{\mathcal{L}}_c$.

\begin{lem}
	\label{lem:matrixspe} Under the assumptions of Theorem \ref{thm_monotonicity}, the operator $\vec{\mathcal{L}}_{c}$ given in \eqref{eq:mL} defined on $L^{2}\left(\mathbb{T}_{L}\right)\times L^{2}\left(\mathbb{T}_{L}\right)$
	with domain $H^{2}\left(\mathbb{T}_{L}\right)\times H^{1}\left(\mathbb{T}_{L}\right)$
	defines a bounded self-adjoint operator with a unique negative eigenvalue. Furthermore, zero is the second eigenvalue, which is simple, and the rest of the spectrum is discrete and bounded away from zero.
\end{lem}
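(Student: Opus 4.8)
The plan is to transport the full spectral picture of the scalar operator $\mathcal{L}_c$ (Proposition \ref{prop:linearspec}) to the matrix operator $\vec{\mathcal{L}}_c$ by exploiting the quadratic-form identity \eqref{eq:MQ}, namely
\[
\mathcal{Q}_c(\varphi_1,\varphi_2)=Q_c(\varphi_1)+\int_{\T_L}\big(c\varphi_{1,x}+\varphi_2\big)^2\,dx,
\]
valid on the form domain $H^1(\T_L)\times L^2(\T_L)$. The decisive structural feature is that the second term is a nonnegative square annihilating exactly the directions $\varphi_2=-c\varphi_{1,x}$; heuristically the change of variables $(\varphi_1,\varphi_2)\mapsto(\varphi_1,\,c\varphi_{1,x}+\varphi_2)$, an isomorphism of $H^1\times L^2$, conjugates $\vec{\mathcal{L}}_c$ to the block-diagonal form with entries $\mathcal{L}_c$ and the identity, which renders the claimed inertia transparent. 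Since this conjugation is unbounded on $L^2\times L^2$, I would argue at the level of subspaces rather than operators.

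First I would count the negative directions. For a lower bound, if $\chi_0$ is the negative eigenfunction of $\mathcal{L}_c$ (so $\mathcal{L}_c\chi_0=\sigma_0\chi_0$ with $\sigma_0<0$), then $\mathcal{Q}_c(\chi_0,-c\chi_{0,x})=Q_c(\chi_0)=\sigma_0\Vert\chi_0\Vert_{L^2}^2<0$, giving at least one negative direction. For the upper bound, let $S$ be any subspace on which $\mathcal{Q}_c$ is negative definite and set $\pi(\varphi_1,\varphi_2)=\varphi_1$. If $\varphi_1=0$ then $\mathcal{Q}_c(0,\varphi_2)=\Vert\varphi_2\Vert_{L^2}^2\geq0$, forcing $\varphi_2=0$, so $\pi|_S$ is injective. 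Moreover, for each $\varphi_1\in\pi(S)$ the negativity of $\mathcal{Q}_c$ together with the nonnegativity of the square term forces $Q_c(\varphi_1)<0$; hence $Q_c$ is negative definite on $\pi(S)$, and since $\mathcal{L}_c$ has exactly one negative eigenvalue we get $\dim S=\dim\pi(S)\leq1$. Therefore $\vec{\mathcal{L}}_c$ has exactly one negative eigenvalue.

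Next I would compute the kernel directly. Solving $\vec{\mathcal{L}}_c(\varphi_1,\varphi_2)=0$, the second row gives $\varphi_2=-c\varphi_{1,x}$; substituting into the first row and using $\omega=1-c^2$ collapses the system to $\mathcal{L}_c\varphi_1=-\omega\varphi_{1,xx}+V_n''(\phi_c)\varphi_1=0$. By Proposition \ref{prop:linearspec} the kernel of $\mathcal{L}_c$ is simple and spanned by $\phi_c'$, so $\ker\vec{\mathcal{L}}_c$ is one-dimensional, spanned by $(\phi_c',-c\phi_c'')=\vec{\phi}_{c,x}$, in agreement with the observation made right after \eqref{eq:mL}.

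Finally, the heart of the matter is the gap above zero, and here I would deliberately avoid compact-resolvent arguments: because of the identity block in the $(2,2)$ entry, $\vec{\mathcal{L}}_c$ has nonempty essential spectrum accumulating near $\omega>0$ (as one sees by diagonalizing the constant-coefficient model in Fourier modes). Instead I would use coercivity on a subspace of finite codimension. Let $P\subset H^1(\T_L)$ be the positive spectral subspace of $\mathcal{L}_c$; by its spectral gap ($\sigma_2>0$) and ellipticity one has $Q_c(\varphi_1)\geq c_1\Vert\varphi_1\Vert_{H^1}^2$ on $P$ for some $c_1>0$. On the codimension-two subspace $P\times L^2(\T_L)$, combining this with $\Vert\varphi_2\Vert_{L^2}^2\leq2\Vert c\varphi_{1,x}+\varphi_2\Vert_{L^2}^2+2c^2\Vert\varphi_{1,x}\Vert_{L^2}^2$ yields $\mathcal{Q}_c(\varphi_1,\varphi_2)\geq\delta\big(\Vert\varphi_1\Vert_{H^1}^2+\Vert\varphi_2\Vert_{L^2}^2\big)$ for some $\delta>0$. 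By the min-max characterization of the spectrum through the form, coercivity by $\delta$ on a subspace of codimension two forces $\sigma(\vec{\mathcal{L}}_c)\cap(-\infty,\delta)$ to contain at most two eigenvalues; since the negative eigenvalue and the zero eigenvalue have already been produced, these are exactly the two lowest, so zero is the second eigenvalue, it is simple and isolated, and the remainder of the spectrum lies in $[\delta,\infty)$, hence is positive and bounded away from zero. I expect this last step to be the main obstacle, precisely because the uniform gap must be extracted in the absence of a compact resolvent, which is why the coercivity-plus-min-max route, rather than naive eigenvalue counting, is the one I would follow.
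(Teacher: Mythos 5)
Your proof is correct, and its skeleton --- the form identity \eqref{eq:MQ}, the trial direction $(Y_0,-cY_0')$, and min--max relative to the two constraints built from $Y_0$ and $\phi_c'$ --- is the same as the paper's. The differences are in the execution, and on two points your version is actually the more careful one, because it repairs defects in the paper's write-up. First, the paper opens by asserting, via ``Weyl's essential spectral Theorem'' and compact self-adjoint operator theory, that the essential spectrum of $\vec{\mathcal{L}}_{c}$ is empty and the spectrum is purely discrete. As you observe, this is false: the identity in the $(2,2)$ entry forces essential spectrum at $\omega=1-c^{2}>0$. Indeed, for $c=0$ the whole infinite-dimensional subspace $\{0\}\times L^{2}(\T_L)$ consists of eigenvectors with eigenvalue $1$, and for $c\neq0$ the normalized sequence $\big(\tfrac{ic}{k}e^{ikx},e^{ikx}\big)$ is a Weyl sequence at $\omega$; this is consistent with the fact that the form domain $H^{1}(\T_L)\times L^{2}(\T_L)$ does not embed compactly into $L^{2}(\T_L)\times L^{2}(\T_L)$. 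Your choice to argue only through coercivity of $\mathcal{Q}_c$ on finite-codimension subspaces plus min--max, never invoking a compact resolvent, is exactly what makes the conclusion needed for Grillakis--Shatah--Strauss (spectrum below some $\delta>0$ consisting precisely of one simple negative eigenvalue and a simple zero eigenvalue) rigorous; the word ``discrete'' in the lemma as stated is, strictly speaking, an overstatement, though harmless since $\omega>0$. Second, for the gap the paper justifies $\widetilde{\sigma}_{2}>0$ by citing only $Q_{c}(\varphi_1)\geq\sigma_{2}\Vert\varphi_1\Vert_{L^{2}(\T_L)}^{2}$ on the positive subspace; by itself this does not bound the Rayleigh quotient away from zero, since the denominator contains $\Vert\varphi_2\Vert_{L^{2}(\T_L)}^{2}$, which is uncontrolled by the numerator when $\varphi_2\approx-c\varphi_{1,x}$ with $\varphi_{1,x}$ large. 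The missing ingredient is precisely the interpolation you supply: upgrading to $Q_{c}(\varphi_1)\geq c_{1}\Vert\varphi_1\Vert_{H^{1}(\T_L)}^{2}$ on the positive subspace (the same G\aa rding-type argument the paper itself uses later, in Lemma \ref{coercivity}, for the odd standing case) and then absorbing $\Vert\varphi_2\Vert_{L^{2}}^{2}\leq2\Vert c\varphi_{1,x}+\varphi_2\Vert_{L^{2}}^{2}+2c^{2}\Vert\varphi_{1,x}\Vert_{L^{2}}^{2}$. Your direct ODE computation of the kernel (reducing $\vec{\mathcal{L}}_{c}(\varphi_1,\varphi_2)=0$ to $\mathcal{L}_{c}\varphi_1=0$) is also cleaner than the paper's indirect identification of $\widetilde{\sigma}_{1}=0$ by min--max. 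In short: same strategy, but your proof closes two genuine gaps in the paper's argument.
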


\begin{proof}
First, from Weyl's essential spectral Theorem, it follows that the essential spectra of $\vec{\mathcal{L}}_{c}$ is empty. Besides, by compact self-adjoint operator theory, $\vec{\mathcal{L}}_{c}$ has only point spectra. Next, we need to check the signs of eigenvalues. Recall that by Proposition
	\ref{prop:linearspec} we already know that $\mathcal{L}_{c}$ has exactly one simple negative eigenvalue and that zero is also a simple eigenvalue (with $\phi_{c}'$ as its associated eigenfunction). Let $\sigma_{0}$ be be the unique negative eigenvalue
of $\mathcal{L}_{c}$ with eigenfunction $Y_{0}$ (notice that $Y_0$ is even). Then, it immediately follow by the definition of $\sigma_0$ that 
\begin{align}\label{lambda0}
\mathcal{Q}_{c}\left(Y_{0},-cY_{0}'\right)=\sigma_{0}\int_{\mathbb{T}_{L}}Y_{0}^{2}\,dx+\int_{\mathbb{T}_{L}}\left(cY_{0}'-cY_{0}'\right)^{2}\,dx=\sigma_{0}\int_{\mathbb{T}_{L}}Y_{0}^{2}\,dx<0.
\end{align}
In the same fashion as in the previous section, by using Oscillation Theory we know we can list the eigenvalues of $\vec{\mathcal{L}}_{c}$ as \[
\widetilde{\sigma}_{0}<\widetilde{\sigma}_1\leq\widetilde{\sigma}_2<\widetilde{\sigma}_3\leq\widetilde{\sigma}_4<...<\widetilde{\sigma}_{2m-1}\leq\widetilde{\sigma}_{2m}<...
\]
Then by the using min-max principle (see for example \cite{RS}) and \eqref{lambda0}, we infer that $\widetilde{\sigma}_{0}<0$. Thus, in order to conclude it is enough to show that $\widetilde{\sigma}_{1}=0$ and $\widetilde{\sigma}_{2}>0$. In fact, for the sake of simplicity let us denote by $X=H^{1}\left(\mathbb{T}_{L}\right)\times L^{2}\left(\mathbb{T}_{L}\right)$. Then, by the min-max principle again, we know that $\widetilde{\sigma}_1$ satisfies the following characterization:  \begin{align*}
	\widetilde{\sigma}_{1}=\max_{\left(\psi_{1},\psi_{2}\right)\in X}\min_{\substack{(\varphi_1, \varphi_2)\in X\backslash\left\{ 0\right\} \\
			(\varphi_1, \varphi_2)\perp\left(\psi_{1},\psi_{2}\right)
		}
	}\frac{\langle \vec{\mathcal{L}}_c(\varphi_1, \varphi_2),(\varphi_1, \varphi_2)\rangle}{\left\Vert (\varphi_1, \varphi_2)\right\Vert _{X}^{2}}.
\end{align*} 
Thus, by the Spectral Theorem, recalling the properties deduced in Property \ref{prop:linearspec} it immediately follows that for any function $\varphi\in H^1(\T_L)$ it holds: \[
\langle \varphi,Y_0\rangle=0 \quad \implies \quad \langle \mathcal{L}_c\varphi,\varphi\rangle\geq 0.
\]
Hence, by choosing $\psi_{1}=Y_{0}$ and $\psi_{2}=0$, by using the explicit form of $\mathcal{Q}_c$ in \eqref{eq:MQ} together with the latter inequality, we infer that 
\begin{align*}
	\widetilde{\sigma}_{1}\geq\min_{\substack{(\varphi_1, \varphi_2)\in X\backslash\left\{ 0\right\} \\
			(\varphi_1, \varphi_2)\perp\left(Y_{0},0\right)
		}
	}\frac{\langle\vec{\mathcal{L}}_c(\varphi_1, \varphi_2),(\varphi_1, \varphi_2)\rangle}{\left\Vert (\varphi_1, \varphi_2)\right\Vert _{X}^{2}}\geq0.
\end{align*}
Therefore, recalling that $\big\langle \vec{\phi}_{c,x},Y_{0}\big\rangle =0$ and $\vec{\phi}_{c,x}\in\ker\big(\vec{\mathcal{L}}_{c}\big)$, we conclude $\widetilde{\sigma}_{1}=0$. Finally, we follow a similar approach to obtain the needed information about $\widetilde{\sigma}_{2}$. In fact, by using the min-max principle once again, we can write $\widetilde{\sigma}_2$ as
\begin{align*}
	\widetilde{\sigma}_{2}=\max_{\substack{\left(\psi_{1},\psi_{2}\right)\in X\\
			\left(\psi_{3,}\psi_{4}\right)\in X
		}
	}\min_{\substack{(\varphi_1, \varphi_2)\in X\backslash\left\{ 0\right\} \\
			(\varphi_1, \varphi_2)\perp\left(\psi_{1},\psi_{2}\right)\\
			(\varphi_1, \varphi_2)\perp\left(\psi_{3},\psi_{4}\right)
		}
	}\frac{\langle \vec{\mathcal{L}}_{c}(\varphi_1, \varphi_2),(\varphi_1, \varphi_2)\rangle}{\left\Vert (\varphi_1, \varphi_2)\right\Vert _{X}^{2}}.
\end{align*}
Thus, in the same fashion  as before, by taking $\left(\psi_{1},\psi_{2}\right)=\left(Y_{0},0\right)$ as well as $\left(\psi_{3},\psi_{4}\right)=\left(\phi_{c}',0\right)$, as an application of the Spectral Theorem and Property \ref{prop:linearspec} we infer that
\begin{align*}
\widetilde{\sigma}_{2}=\min_{\substack{(\varphi_1, \varphi_2)\in X\backslash\left\{ 0\right\} \\
			\varphi_1\perp Y_{0}, \, \varphi_1\perp\phi_{c}'
		}
	}\frac{\langle \vec{\mathcal{L}_{c}}(\varphi_1, \varphi_2),(\varphi_1, \varphi_2)\rangle}{\left\Vert (\varphi_1, \varphi_2)\right\Vert _{X}^{2}}>0.
\end{align*}
More precisely, we have used the the explicit form of $\mathcal{Q}_c$ in \eqref{eq:MQ}, as well as the fact that $Q_{c}(\varphi_1, \varphi_1)\geq\sigma_{2}\left\Vert \varphi_1\right\Vert _{L^{2}\left(\mathbb{T}_{L}\right)}$
for any function $\varphi_1\in H^1(\T_L)$ satisfying $\varphi_1\perp Y_{0}$ and $ \varphi_1\perp\phi_{c}'$, where $\sigma_{2}$ is the third eigenvalue of $\mathcal{L}_{c}$ (which is positive by Proposition \ref{prop:linearspec}). Summarizing, we have proven that $\lambda_{0}<0$, $\lambda_{1}=0$ were both eigenvalues are simple, and $\lambda_{2}>0$, which concludes the proof.
\end{proof}
To finish this section, we consider the spectrum of $\vec{\mathcal{L}}_{c}$ for the
standing solution $S(x):=\phi_{0}$ restricted onto odd functional
spaces.
\begin{lem}\label{lem:oddspec}Under the assumptions of Theorem \ref{thm_monotonicity} the operator $\vec{\mathcal{L}}$ with $c=0$, that is $\vec{\mathcal{L}}_{0}$, defined in $L_{\text{odd}}^{2}\left(\mathbb{T}_{L}\right)\times L_{\text{odd}}^{2}\left(\mathbb{T}_{L}\right)$
 with domain $H_{\text{odd}}^{2}\left(\mathbb{T}_{L}\right)\times H_{\text{odd}}^{1}\left(\mathbb{T}_{L}\right)$,
 defines as a bounded self-adjoint operator with no negative eigenvalues and $\ker(\vec{\mathcal{L}})=\{(0,0)\}$
	in $L_{\text{odd}}^{2}\left(\mathbb{T}_{L}\right)\times L_{\text{odd}}^{2}\left(\mathbb{T}_{L}\right)$.
	Moreover, the rest of the spectrum is discrete and bounded away from
	zero.
\end{lem}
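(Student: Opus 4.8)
The plan is to exploit the fact that for $c=0$ the matrix operator in \eqref{eq:mL} decouples, and then to run a parity argument that excises both the negative direction and the kernel once we restrict to odd functions. First I would observe that setting $c=0$ in \eqref{eq:mL} kills the off-diagonal terms, so that $\vec{\mathcal{L}}_0=\mathrm{diag}(\mathcal{L}_0,1)$ with $\mathcal{L}_0=-\partial_x^2+V_n''(\phi_0)$ (here $\omega=1$), and the quadratic form \eqref{eq:MQ} splits accordingly as $\mathcal{Q}_0(\varphi_1,\varphi_2)=\langle \mathcal{L}_0\varphi_1,\varphi_1\rangle+\|\varphi_2\|_{L^2}^2$.

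The core of the argument is a parity observation. Since $\phi_0$ is odd and $V_n$ is even (so that $V_n''$ is even), the potential $x\mapsto V_n''(\phi_0(x))$ is an even function; hence $\mathcal{L}_0$ commutes with the reflection $x\mapsto -x$ and preserves the splitting $L^2(\T_L)=L^2_{\mathrm{even}}\oplus L^2_{\mathrm{odd}}$. By Proposition \ref{prop:linearspec}, $\mathcal{L}_0$ has a unique negative eigenvalue $\sigma_0$ with eigenfunction $Y_0$, which is even (the non-sign-changing ground state of a Schr\"odinger operator with even potential), and a simple zero eigenvalue with eigenfunction $\phi_0'$, which is likewise even, being the derivative of the odd function $\phi_0$. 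Consequently both the negative direction and the kernel of $\mathcal{L}_0$ live entirely in $L^2_{\mathrm{even}}$.

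Next I would convert this into a coercivity estimate on the odd sector. Any odd $\varphi_1$ is automatically orthogonal to the even functions $Y_0$ and $\phi_0'$, so the min-max bound already used in Lemma \ref{lem:matrixspe} gives $\langle \mathcal{L}_0\varphi_1,\varphi_1\rangle\ge \sigma_2\|\varphi_1\|_{L^2}^2$, where $\sigma_2>0$ is the third (smallest positive) eigenvalue of $\mathcal{L}_0$. Substituting into the split form yields, for every $(\varphi_1,\varphi_2)\in X_{\mathrm{odd}}$,
\[
\mathcal{Q}_0(\varphi_1,\varphi_2)\ge \sigma_2\|\varphi_1\|_{L^2}^2+\|\varphi_2\|_{L^2}^2\ge \min(\sigma_2,1)\,\big(\|\varphi_1\|_{L^2}^2+\|\varphi_2\|_{L^2}^2\big).
\]
This coercivity delivers the three claims at once: the form is strictly positive, so $\vec{\mathcal{L}}_0$ has no negative eigenvalue; if $(\varphi_1,\varphi_2)\in\ker\vec{\mathcal{L}}_0$ then $\mathcal{Q}_0(\varphi_1,\varphi_2)=0$ forces $(\varphi_1,\varphi_2)=(0,0)$; and the whole spectrum lies in $[\min(\sigma_2,1),\infty)$, hence is bounded away from zero. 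For the discreteness I would argue as in Lemma \ref{lem:matrixspe}: on the odd sector the spectrum of $\vec{\mathcal{L}}_0$ is the union of the discrete spectrum of $\mathcal{L}_0$ restricted to odd functions (isolated eigenvalues $\ge\sigma_2$ tending to $+\infty$) together with the eigenvalue $1$ produced by the identity block, all of which are positive.

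The single delicate point is the parity bookkeeping: verifying that the two dangerous spectral directions of the scalar operator, namely $Y_0$ and $\phi_0'$, are \emph{both} even, so that passing to the odd subspace removes precisely the negative eigenvalue and the kernel while leaving a strictly positive operator. Everything else is an immediate consequence of the block-diagonal structure at $c=0$ and the spectral picture already established in Proposition \ref{prop:linearspec} and Lemma \ref{lem:matrixspe}.
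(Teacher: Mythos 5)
Your proof is correct and follows essentially the same route as the paper's: the parity bookkeeping showing that $Y_0$ and $\phi_0'=S'$ are both even, hence excluded from the odd sector, which combined with Proposition \ref{prop:linearspec} gives the coercivity $\langle\mathcal{L}_0\varphi_1,\varphi_1\rangle\geq\sigma_2\Vert\varphi_1\Vert_{L^2}^2$ for odd $\varphi_1$, and then the passage to the matrix operator. Your explicit use of the block-diagonal structure $\vec{\mathcal{L}}_0=\mathrm{diag}\left(\mathcal{L}_0,1\right)$ at $c=0$ is only a more transparent rendering of the paper's appeal to the min-max argument of Lemma \ref{lem:matrixspe}; it is the same decoupling the paper itself exploits when deriving \eqref{matrix_coerc}.
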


\begin{proof}
	First of all,  notice that, since  $Y_0$  is associated to the smallest eigenvalue of $\mathcal{L}_c$, it immediately follows that $Y_0$ is an even function regarded as a function in the whole line $\R$. On the other hand, we already know that $S'=\phi_{0}'$ is even in $\R$. Thus, none of these two eigenfunctions can belong to $H^2_{\mathrm{odd}}$. Therefore, gathering this information with Proposition \ref{prop:linearspec} we infer that the spectra of $\mathcal{L}_s:=\mathcal{L}_0$ which is defined in $L^2_{\mathrm{odd}}(\T_L)$ with domain $H^2_{\mathrm{odd}}(\T_L)$ is strictly positive. Hence, by the spectral theorem we deduce that, for all $\varphi_1\in H_{\text{odd}}^{1}\left(\mathbb{T}_{L}\right)$, it holds
\begin{align}\label{coer_c_not}
\langle\mathcal{L}_s\varphi_1, \varphi_1\rangle\geq\sigma\left\Vert \varphi_1\right\Vert _{L^{2}\left(\mathbb{T}_{L}\right)},
\end{align}
for some $\sigma\geq \sigma_2>0$. Then the same arguments as in the proof of Lemma \ref{lem:matrixspe} above, using the min-max principle gives us the desired results for the matrix operator
$\vec{\mathcal{L}}$.
\end{proof}

\section{Orbital Stability of standing waves in the odd energy space}\label{stab_standing}

From now on, and for the rest of this section, we shall always assume that $c=0$ and $L>0$ arbitrary but satisfies the hypothesis of Theorem \ref{thm_monotonicity}. Our goal here is to use the spectral analysis carried out in the previous section to prove the orbital stability of standing solutions under the additional hypothesis of global (in time) spatial oddness. In fact, one important advantage in this case is given by the preservation of the spatial-oddness by the periodic flow of the $\phi^{4n}$-equation. That is, if the initial data is $(\mathrm{odd},\mathrm{odd})$, then so is the solution associated to it for all times in the maximal existence interval. Then, recalling that  the traveling wave solution $\phi_c(x)$ constructed in Section \ref{existence} is odd, we obtain that if the initial perturbation $\vec\varepsilon_0=(\varepsilon_{0,1},\varepsilon_{0,2})=(\mathrm{odd},\mathrm{odd})$ and $c=0$, then so is the solution associated to \[
(\phi_{0,1},\phi_{0,2})=(S,0)+(\varepsilon_{0,1},\varepsilon_{0,2}).
\]
Thus, it is natural to study the time evolution of an initial odd perturbation of $(S,0)$ in terms of the evolution of its perturbation $\vec{\varepsilon}(t)$. In other words, for all times we shall write the solution as $\vec{\phi}(t,x)=(S(x),0)+\vec{\varepsilon}(t,x)$.  Additionally, by using equation \eqref{phif_2} and Taylor expansion we deduce that $\vec{\varepsilon}(t,x)$ satisfy the first-order system
\begin{align}\label{eps_system}
\begin{cases}
\partial_t\varepsilon_1=\varepsilon_2,
\\ \partial_t\varepsilon_2=-\mathcal{L}_{\mathrm{s}}\varepsilon_1+\mathcal{O}(\varepsilon_1^2),
\end{cases}
\end{align}
where $\mathcal{L}_s$ is the linearized operator around $S$, which is given by: \[
\mathcal{L}_s=-\partial_x^2+V_n''(S).
\]
Now, on the one-hand, from the spectral analysis developed in the previous section, we know that there is only one negative eigenvalue associated with the operator $\mathcal{L}_s$. Even more, we recall that both, $Y_0$ and $S'(x)$, are even functions (regarded as functions defined in the whole line $\R$). Moreover, it is not too difficult to see that periodic odd and even functions (where the parity is regarded as functions defined in the whole line $\R$) belonging to $H^1(\T_L)$ are orthogonal in the corresponding $H^1(\T_L)$-inner product. Thus, gathering all the analysis above we are in position to establish the following lemma.

\begin{lem}\label{coercivity}
Under the assumptions of Theorem  \ref{thm_monotonicity} the following holds: There exists $\gamma>0$ such that for any odd function $\upsilon\in H^1_{\mathrm{odd}}(\T_L)$ we have \[
\langle \mathcal{L}_{s}\upsilon,\upsilon\rangle \geq \gamma\Vert \upsilon\Vert_{H^1_{\mathrm{odd}}}^2.
\]
\end{lem}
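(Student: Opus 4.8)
The plan is to deduce the $H^1$-coercivity from the $L^2$-coercivity already contained in Lemma \ref{lem:oddspec}. The conceptual point, which does most of the work, is that restricting to odd functions automatically annihilates the two ``bad'' directions of $\mathcal{L}_s$: by the discussion preceding the lemma, both the negative eigenfunction $Y_0$ and the kernel element $S'=\phi_0'$ are \emph{even} (as functions on $\R$), hence $H^1(\T_L)$-orthogonal to every odd function. Consequently, on $H^1_{\mathrm{odd}}(\T_L)$ the spectrum of $\mathcal{L}_s$ is bounded below by its third eigenvalue $\sigma_2>0$, which is precisely estimate \eqref{coer_c_not}: there exists $\sigma\geq\sigma_2>0$ such that
\[
\langle \mathcal{L}_s\upsilon,\upsilon\rangle\geq \sigma\|\upsilon\|_{L^2(\T_L)}^2 \qquad \text{for all }\upsilon\in H^1_{\mathrm{odd}}(\T_L).
\]
Thus the only genuine task is to promote this $L^2$-bound to the $H^1$-norm.

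For the upgrade I would use the standard convex-combination argument. Writing the quadratic form explicitly,
\[
\langle \mathcal{L}_s\upsilon,\upsilon\rangle=\int_{\T_L}\upsilon_x^2\,dx+\int_{\T_L}V_n''(S)\upsilon^2\,dx,
\]
I split $\langle \mathcal{L}_s\upsilon,\upsilon\rangle=\theta\langle \mathcal{L}_s\upsilon,\upsilon\rangle+(1-\theta)\langle \mathcal{L}_s\upsilon,\upsilon\rangle$ for a parameter $\theta\in(0,1)$ to be fixed. On the first copy I apply the $L^2$-coercivity, while on the second I keep the explicit expression and bound the potential term from below using $\int_{\T_L} V_n''(S)\upsilon^2\,dx\geq -\|V_n''(S)\|_{L^\infty}\|\upsilon\|_{L^2}^2$. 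This is legitimate because $S\in H^\infty(\T_L)$ by Theorem \ref{thm_monotonicity} and $V_n$ is a polynomial, so $V_n''(S)$ is a bounded continuous periodic function. Collecting terms yields
\[
\langle \mathcal{L}_s\upsilon,\upsilon\rangle\geq (1-\theta)\int_{\T_L}\upsilon_x^2\,dx+\big(\theta\sigma-(1-\theta)\|V_n''(S)\|_{L^\infty}\big)\|\upsilon\|_{L^2}^2.
\]
Choosing $\theta$ sufficiently close to $1$ makes the coefficient of $\|\upsilon\|_{L^2}^2$ strictly positive, so both terms on the right are controlled from below by a single constant $\gamma>0$ times $\int_{\T_L}\upsilon_x^2\,dx+\int_{\T_L}\upsilon^2\,dx=\|\upsilon\|_{H^1}^2$, which is the claim with $\gamma=\min\{1-\theta,\ \theta\sigma-(1-\theta)\|V_n''(S)\|_{L^\infty}\}$.

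I do not expect a serious obstacle here, since the spectral gap has already been secured in Lemma \ref{lem:oddspec}; the only points requiring a little care are (i) justifying rigorously that oddness forces orthogonality to $Y_0$ and $S'$, for which it suffices to note, as observed before the lemma, that the $H^1(\T_L)$-inner product of a periodic odd function against a periodic even function vanishes, and (ii) checking the $L^\infty$-bound on $V_n''(S)$, which is immediate from the smoothness of $S$ together with its boundedness on the compact torus. The remainder is the routine interpolation displayed above.
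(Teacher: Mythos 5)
Your proof is correct and follows essentially the same route as the paper: both arguments take the $L^2$-coercivity \eqref{coer_c_not} on odd functions (secured by the evenness of $Y_0$ and $S'$), combine it with the identity $\Vert\upsilon_x\Vert_{L^2}^2=\langle\mathcal{L}_s\upsilon,\upsilon\rangle-\int_{\T_L}V_n''(S)\upsilon^2\,dx$ and the bound $C_n=\sup_{x\in\T_L}\vert V_n''(S(x))\vert<\infty$, and then balance weights (your single convex parameter $\theta$ playing the role of the paper's pair $\delta,\eta$) so that the $L^2$-coefficient stays positive. The resulting constant $\gamma=\min\{1-\theta,\ \theta\sigma-(1-\theta)C_n\}$ is exactly the kind of constant the paper's choice of $\delta$ small and $\eta$ large produces.
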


\begin{proof}
In fact, first of all notice that we already know that the desired inequality holds if we change the $H^1$ norm for the $L^2$ in the right-hand side (see inequality \eqref{coer_c_not}). Now, we shall prove that by lowering the constant $\sigma$ we can \emph{improve} the latter inequality to put the $H^1$-norm in the right-hand side. In fact, by using the definition of $\mathcal{L}_s$ above, it immeditely follows that \begin{align}\label{QH}
	\left\Vert \upsilon_{x}\right\Vert _{L^{2}\left(\mathbb{T}_{L}\right)}^{2}= \langle \mathcal{L}_s\upsilon,\upsilon\rangle-\int_{\mathbb{T}_{L}}V''_n\big(S(x)\big)\upsilon^2\,dx.
\end{align}	
On the other hand, from the coercivity property given by Proposition \eqref{coer_c_not} it follows that, for any pair of positive numbers $\delta$ and $\eta$, and any odd periodic function $\upsilon \in H^1_{\mathrm{odd}}(\T_L)$ we have
\begin{align*}
\delta\left\Vert \upsilon_{x}\right\Vert _{L^{2}\left(\mathbb{T}_{L}\right)}^{2}+\eta\left\Vert \upsilon\right\Vert _{L^{2}\left(\mathbb{T}_{L}\right)}^{2}&\lesssim \delta\left\Vert \upsilon_{x}\right\Vert _{L^{2}\left(\mathbb{T}_{L}\right)}^{2}+\tfrac{\eta}{\sigma}\langle\mathcal{L}_s\upsilon, \upsilon\rangle
\\ &\lesssim\left(\delta+\tfrac{\eta}{\sigma}\right)\langle\mathcal{L}_s\upsilon, \upsilon\rangle+\delta C_{n}\Vert \upsilon\Vert^2 _{L^{2}\left(\mathbb{T}_{L}\right)},
\end{align*}
where in latter inequality we have used identity \eqref{QH} and $C_{n}:=\sup_{x\in\mathbb{T}_{L}}\vert V_{n}''(S(x))\vert$. Then, performing direct computations, reorganizing the latter inequality we conclude that
\begin{align*}
	\delta\Vert \upsilon_{x}\Vert _{L^{2}\left(\mathbb{T}_{L}\right)}^{2}+\left(\eta-\delta C_{n}\right)\left\Vert \upsilon\right\Vert _{L^{2}\left(\mathbb{T}_{L}\right)}^{2}\lesssim\left(\delta+\frac{\eta}{\sigma}\right)\langle\mathcal{L}_s\upsilon, \upsilon\rangle.
\end{align*}
	Choosing appropriate $\delta$ small enough and $\eta$ sufficiently large so that $\eta-\delta C_{n}>0$, it follows that there exists $\gamma>0$ which depends $C_{n}$
	such that
	\begin{equation}
	\langle\mathcal{L}_s\upsilon, \upsilon\rangle\geq\gamma\left\Vert \upsilon\right\Vert _{H^{1}\left(\mathbb{T}_{L}\right)}^{2}.\label{eq:energycoer}
	\end{equation}
The proof is complete.
\end{proof}
As an important application of the latter lemma we are able to improve the coercivity of the linearized Hamiltonian in \eqref{eq:mL} in the odd energy space to put the $X$-norm in the right-hand side (exactly as in the previous lemma). In fact, let us start by recalling that, in this case ($c=0$), the matrix quadratic form is given by:
\begin{align*}
\big\langle\vec{\mathcal{L}}_0(\upsilon_1, \upsilon_2),(\upsilon_1, \upsilon_2)\big\rangle =\int_{\mathbb{T}_{L}}\big(\upsilon_{1,x}^{2}+V_{n}''(S)\upsilon_1^{2}+\upsilon_2^{2}\big)dx.
\end{align*}
Applying the coercivity \eqref{eq:energycoer} to the first part of the matrix quadratic form it immediately follows that
\begin{align*}
\big\langle\vec{\mathcal{L}}_0(\upsilon_1, \upsilon_2),(\upsilon_1, \upsilon_2)\big\rangle\geq\gamma\left\Vert \upsilon_1\right\Vert _{H^{1}\left(\mathbb{T}_{L}\right)}^{2}+\left\Vert \upsilon_2\right\Vert _{L^{2}\left(\mathbb{T}_{L}\right)}^{2},
\end{align*}
which in particular implies that, for any odd $(\upsilon_1,\upsilon_2)\in H^1_{\mathrm{odd}}(\T_L)\times L^2_{\mathrm{odd}}(\T_L)$, one has the desired coercivity
\begin{align}\label{matrix_coerc}
\big\langle\vec{\mathcal{L}}_0(\upsilon_1,\upsilon_2),(\upsilon_1,\upsilon_2)\big\rangle\geq\widetilde{\gamma}\left\Vert (\upsilon_1, \upsilon_2)\right\Vert _{X_\text{odd}}^{2}.
\end{align} 	
With the information above we are in position to establish our orbital stability result.
\begin{thm}
	\label{thm:staodd} Consider $n\in\N$ arbitrary but fixed and let $L>\sqrt{2}\delta_n$ so that the hypothesis of Theorem \ref{thm_monotonicity} holds with $c=0$. The periodic standing wave solution $\vec{S}(x)=(S(x),0)$ is orbitally stable in the odd energy space $X_{\mathrm{odd}}$ under the periodic flow of the $\phi^{4n}$-equation.  More precisely, there exists $\delta>0$ small enough such that for any initial data
\begin{align*}
\vec{\varepsilon}_{0}=\big(\varepsilon_{0,1},\varepsilon_{0,2}\big)\in H_{\mathrm{odd}}^{1}\left(\mathbb{T}_{L}\right)\times L_{\mathrm{odd}}^{2}\left(\mathbb{T}_{L}\right),
\end{align*}
satisfying $\left\Vert \big(\varepsilon_{0,1},\varepsilon_{0,2}\big)\right\Vert_{X_{\mathrm{odd}}}\leq\delta$ the following holds: There exists a constant $C>0$ such that the solution to equation \eqref{eps_system} associated to $\vec{\varepsilon}_0$ satisfies:
\begin{align*}
\hbox{for all }\,t\in\R, \quad 	\big\Vert \big(\varepsilon_{1}(t),\varepsilon_{2}(t)\big)\big\Vert_{X_{\text{odd}}}\leq C\delta.
\end{align*}
\end{thm}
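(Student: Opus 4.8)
The plan is to run a standard Lyapunov-functional argument, taking the conserved energy $\mathcal{E}$ from \eqref{energy} as the Lyapunov functional and the coercivity \eqref{matrix_coerc} as the central coercive estimate. Since $c=0$, the equation $\mathcal{E}'(\vec{\phi}_c)+c\mathcal{P}'(\vec{\phi}_c)=0$ reduces to $\mathcal{E}'(\vec{S})=0$, so $\vec{S}$ is a critical point of $\mathcal{E}$ and the first-order variation of the energy along the perturbation vanishes. Writing the solution as $\vec{\phi}(t)=\vec{S}+\vec{\varepsilon}(t)$ as in \eqref{eps_system}, I expect the whole argument to reduce to controlling $\|\vec{\varepsilon}(t)\|_{X_{\mathrm{odd}}}$ by the energy difference, which in turn is frozen in time by conservation.

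First I would expand the energy. Setting $\phi_1=S+\varepsilon_1$, $\phi_2=\varepsilon_2$ and Taylor-expanding $V_n(S+\varepsilon_1)$, after integrating the linear term by parts and using the standing-wave equation $-S_{xx}+V_n'(S)=0$ (that is \eqref{trav_eq} with $\omega=1$), the linear-in-$\vec{\varepsilon}$ contribution cancels, leaving
\[
\mathcal{E}(\vec{S}+\vec{\varepsilon})-\mathcal{E}(\vec{S})=\tfrac{1}{2}\big\langle\vec{\mathcal{L}}_0\vec{\varepsilon},\vec{\varepsilon}\big\rangle+\int_0^L R(S,\varepsilon_1)\,dx,
\]
where $R(S,\varepsilon_1):=V_n(S+\varepsilon_1)-V_n(S)-V_n'(S)\varepsilon_1-\tfrac{1}{2}V_n''(S)\varepsilon_1^2$ collects the cubic-and-higher terms and the quadratic part is exactly the matrix quadratic form of $\vec{\mathcal{L}}_0$ from \eqref{eq:mL} at $c=0$.

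Next, because the initial perturbation $\vec{\varepsilon}_0$ is odd and the $\phi^{4n}$-flow preserves oddness, $\vec{\varepsilon}(t)$ stays in $X_{\mathrm{odd}}$ for all times, so the coercivity \eqref{matrix_coerc} applies and gives $\langle\vec{\mathcal{L}}_0\vec{\varepsilon},\vec{\varepsilon}\rangle\geq\widetilde{\gamma}\|\vec{\varepsilon}\|_{X_{\mathrm{odd}}}^2$. The remainder I would bound using that $V_n$ is polynomial, $S$ is bounded, and the one-dimensional embedding $H^1(\T_L)\hookrightarrow L^\infty(\T_L)$: for $\|\vec{\varepsilon}\|_{X_{\mathrm{odd}}}\leq 1$ one gets $\int_0^L|R|\,dx\lesssim\|\varepsilon_1\|_{L^\infty}\|\varepsilon_1\|_{L^2}^2\lesssim\|\vec{\varepsilon}\|_{X_{\mathrm{odd}}}^3$. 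Combining this with conservation of energy, $\mathcal{E}(\vec{\phi}(t))=\mathcal{E}(\vec{\phi}_0)$, and the elementary upper bound $\mathcal{E}(\vec{S}+\vec{\varepsilon}_0)-\mathcal{E}(\vec{S})\lesssim\|\vec{\varepsilon}_0\|_{X_{\mathrm{odd}}}^2$ (boundedness of $\vec{\mathcal{L}}_0$ together with the cubic bound), I arrive, while $\|\vec{\varepsilon}(t)\|_{X_{\mathrm{odd}}}$ remains small, at
\[
\tfrac{\widetilde{\gamma}}{2}\Vert\vec{\varepsilon}(t)\Vert_{X_{\mathrm{odd}}}^2-C_1\Vert\vec{\varepsilon}(t)\Vert_{X_{\mathrm{odd}}}^3\leq C_3\Vert\vec{\varepsilon}_0\Vert_{X_{\mathrm{odd}}}^2\leq C_3\delta^2.
\]

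To close, I would run a continuity/bootstrap argument. Let $f(t):=\|\vec{\varepsilon}(t)\|_{X_{\mathrm{odd}}}$, which is continuous by continuity of the flow in $X$. As long as $f(t)\leq\widetilde{\gamma}/(4C_1)$, the cubic term is absorbed by half of the quadratic one and the inequality yields $f(t)\leq\sqrt{4C_3/\widetilde{\gamma}}\,\delta=:C\delta$; choosing $\delta$ small enough that $C\delta<\widetilde{\gamma}/(4C_1)$ makes this bound strictly below the threshold, so the standard open–closed continuity argument propagates it to the whole maximal existence interval, and the resulting uniform control of $\|\vec{\phi}(t)\|_X$ forbids finite-time blow-up, giving global existence and $f(t)\leq C\delta$ for all $t\in\R$. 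I expect the genuine difficulty to lie already behind us: the coercivity of $\vec{\mathcal{L}}_0$ on the odd space — where the negative eigendirection $Y_0$ and the kernel direction $S'$, both \emph{even}, are excluded — is exactly \eqref{matrix_coerc}. What remains truly requires care only in two points: controlling the cubic remainder \emph{uniformly} (which is why the a priori smallness coming from the bootstrap is essential, and the estimate is inherently conditional), and interfacing the energy estimate with the local well-posedness in $X$ so that the supremum over $t\in\R$ is rigorously justified rather than merely over a maximal interval.
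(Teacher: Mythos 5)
Your proposal follows essentially the same route as the paper's own proof: write $\vec{\phi}(t)=\vec{S}+\vec{\varepsilon}(t)$, expand the conserved energy, cancel the linear term via the standing-wave equation $S''=V_n'(S)$, apply the odd-space coercivity \eqref{matrix_coerc} (valid because oddness is preserved by the flow), and conclude by conservation of energy. If anything, your write-up is more careful than the paper's: you make explicit the continuity/bootstrap argument needed to absorb the cubic remainder (whose sign is not controlled, so the paper's inequality chain $\gtrsim \gamma_n\Vert\vec{\varepsilon}\Vert_{X_{\mathrm{odd}}}^2+\Vert\varepsilon_1\Vert_{H^1}^3\gtrsim\Vert\vec{\varepsilon}\Vert_{X_{\mathrm{odd}}}^2$ implicitly presupposes smallness of $\Vert\vec{\varepsilon}(t)\Vert_{X_{\mathrm{odd}}}$ at later times), and you use the quadratic upper bound $\mathcal{E}(\vec{S}+\vec{\varepsilon}_0)-\mathcal{E}(\vec{S})\lesssim\delta^2$ on the initial energy difference, which yields the conclusion $\Vert\vec{\varepsilon}(t)\Vert_{X_{\mathrm{odd}}}\leq C\delta$ exactly as stated, whereas the paper's bound $\vert\mathcal{E}(\vec{\phi}_0)-\mathcal{E}(\vec{S})\vert\lesssim\delta$ literally gives only a bound of order $\sqrt{\delta}$.
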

\begin{proof}
In fact, by  the smallness assumption of the initial data $(\varepsilon_{0,1},\varepsilon_{0,2})$, it follows that \[
\big\vert\mathcal{E}\big(\vec{\phi}_{0}\big)-\mathcal{E}\big(\vec{S}\big)\big\vert \lesssim\delta,
\]
where $\mathcal{E}$ is the conserved energy functional defined in \eqref{energy} with $\lambda_n=1$. Then, for any $t\in\mathbb{R}$, explicitly computing the differences of the energies between $(\phi_1,\phi_2)$ and $(S,0)$, we get
\begin{align}
\mathcal{E}\big(\vec{\phi}\big)-\mathcal{E}\big(\vec{S}\big)=\frac{1}{2}\int_{\T_L} \Big(\varepsilon_{1,x}^{2}+2\varepsilon_{1,x}S'+\varepsilon_{2,x}^{2}+2V_{n}\big(S+\varepsilon_{1}\big)-2V_n(S)\Big)dx\label{diffE}
\end{align}
For the last two terms inside the integral in the latter identity, by Taylor expansion we infer
\begin{align*}
V_{n}\big(S+\varepsilon_{1}\big)=V_{n}(S)+V_{n}'(S)\varepsilon_{1}+\frac{1}{2}V_{n}''(S)\varepsilon_{1}^{2}+\mathcal{O}\big(\varepsilon_{1}^{3}\big).
\end{align*}
Hence, integrating by parts applied to the second integrand of \eqref{diffE} and  using the equation solved by $S$, that is, replacing $S''=-V'_n(S)$, we can write
\begin{align*}
\big\vert\mathcal{E}\big(\vec{\phi}\big)-\mathcal{E}\big(\vec{S}\big)\big\vert & =\frac{1}{2}\int_{\T_L}\big(\varepsilon_{1,x}^{2}+\varepsilon_{2}^{2}+V_{n}''\left(S\right)\varepsilon_{1}^{2}+\mathcal{O}\big(\varepsilon_{1}^{2}\big)\big)dx 
\\ & \gtrsim \gamma_{n}\left\Vert \left(\varepsilon_{1},\varepsilon_{2}\right)\right\Vert _{X_\text{odd}}^{2}+\left\Vert \varepsilon_{1}\right\Vert _{H^{1}\left(\mathbb{T}_{L}\right)}^{3} \gtrsim\big\Vert (\varepsilon_{1},\varepsilon_{2})\big\Vert _{X_{\text{odd}}}^{2}.\nonumber 
\end{align*}
where in the first inequality above we have used the coercivity property \eqref{matrix_coerc} and Sobolev embedding. Therefore, due to the energy conservation we conclude that, for any $t\in\mathbb{R}$ the following holds:
\begin{align*}
	\big\Vert (\varepsilon_{1}(t),\varepsilon_{2}(t))\big\Vert_{X_{\text{odd}}}^{2} & \lesssim\big\vert\mathcal{E}\big(\vec{\phi}(t)\big)-\mathcal{E}\big(\vec{S}\big)\big\vert \lesssim\big\vert\mathcal{E}\big(\vec{\phi}_{0}\big)-\mathcal{E}\big(\vec{S}\big)\big\vert\lesssim\delta.
\end{align*} 
The proof is complete.
\end{proof}

\section{Orbital instability of traveling waves in the whole space\label{sec:Instability}}

In this section, we gather the spectral information of the linearized Hamiltonian given in Lemma \ref{lem:matrixspe} and the general result of Grillakis-Shatah-Strauss in  \cite{GSS} to establish orbital instability of traveling wave solutions $\vec{\phi}_{c}\left(x-ct\right)$ under general perturbations in the energy space. It is worthwhile to notice that in order to apply the main result in \cite{GSS} we need to analyze the sign of $\tfrac{d}{dc}\Vert \phi_{c,x}\Vert_{L^2}^2$. However, the fact that no explicit formula for the solution $\vec{\phi}_{c}$ exists presents a hard obstacle to overcome. In order to surpass this difficulty, the monotonicity of the period $L$ with respect to $\beta$ shall play a key role in our analysis. Finally, we remark that, without loss of generality, from now on we shall assume always that $c>0$.
\begin{thm}
	\label{thm:instab}
	Consider $n\in\N$ and let $L>0$ be arbitrary but fixed. Under the hypothesis of Theorem \ref{thm_monotonicity}, the traveling wave $\vec{\phi}_{c}\left(x-ct\right)$ constructed in Section \ref{existence} is orbitally unstable in the energy space $H^{1}\left(\mathbb{T}_{L}\right)\times L^{2}\left(\mathbb{T}_{L}\right)$ under the periodic flow of the $\phi^{4n}$-equation.
\end{thm}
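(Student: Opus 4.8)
The plan is to invoke the abstract orbital instability theorem of Grillakis--Shatah--Strauss \cite{GSS}, for which the two structural ingredients are already available: the smooth curve $c\mapsto\vec{\phi}_c$ furnished by Theorem \ref{thm_monotonicity}, and the spectral picture of the linearized Hamiltonian $\vec{\mathcal{L}}_c$ from Lemma \ref{lem:matrixspe}, namely that $\vec{\mathcal{L}}_c$ has exactly one simple negative eigenvalue, a simple zero eigenvalue whose eigenspace is spanned by the translation mode $\vec{\phi}_{c,x}$, and the remainder of its spectrum positive and bounded away from zero. Thus $n(\vec{\mathcal{L}}_c)=1$ and the kernel is generated by the symmetry, so the GSS dichotomy reduces the instability of $\vec{\phi}_c$ to the strict concavity of the scalar function $d(c):=\mathcal{E}(\vec{\phi}_c)+c\,\mathcal{P}(\vec{\phi}_c)$: if $p=1$ when $d''>0$ and $p=0$ when $d''<0$, then $n(\vec{\mathcal{L}}_c)-p$ odd forces instability, and since $n(\vec{\mathcal{L}}_c)=1$ it suffices to prove $d''(c)<0$.

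First I would record the slope identity. Because $\vec{\phi}_c$ solves the critical point equation $\mathcal{E}'(\vec{\phi}_c)+c\mathcal{P}'(\vec{\phi}_c)=0$, differentiating $d$ and invoking this relation gives $d'(c)=\mathcal{P}(\vec{\phi}_c)$. For the traveling wave one has $\vec{\phi}_c=(\phi_c,-c\phi_c')$, so by \eqref{momentum}, $\mathcal{P}(\vec{\phi}_c)=-c\int_0^L(\phi_c')^2\,dx=-c\,\Vert\phi_{c,x}\Vert_{L^2}^2$. Writing $I(c):=\Vert\phi_{c,x}\Vert_{L^2}^2>0$, we obtain $d''(c)=-\big(I(c)+c\,I'(c)\big)$, so the whole problem is reduced to the sign of $\tfrac{d}{dc}\Vert\phi_{c,x}\Vert_{L^2}^2$. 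The main obstacle is precisely that for $n>2$ there is no explicit formula for $\phi_c$, so this sign cannot be read off directly and must be extracted from the period-monotonicity of Theorem \ref{thm_monotonicity}.

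The heart of the argument is to encode both $L$ and $I(c)$ through the first integral \eqref{ham_energy_beta_lvl}, which gives $(\phi_c')^2=2\beta+\tfrac{2}{\omega}\big(V_n(\phi_c)-V_n(0)\big)$. Setting $G:=V_n-V_n(0)\leq 0$ on $(-\tfrac12,\tfrac12)$ and $E:=\omega\beta$, the quarter-period representation (after the change of variables $dx=d\phi/\phi_c'$ on each monotone arc of the odd orbit) yields
\[
L=\sqrt{\omega}\,\Phi(E),\qquad I(c)=\frac{1}{\sqrt{\omega}}\,\Psi(E),
\]
where $\Phi(E):=2\sqrt{2}\int_0^{x_1(E)}\tfrac{d\phi}{\sqrt{E+G(\phi)}}$, $\Psi(E):=4\sqrt{2}\int_0^{x_1(E)}\sqrt{E+G(\phi)}\,d\phi$, and $x_1(E)$ solves $G(x_1)=-E$. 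A direct differentiation in which the boundary term drops because $E+G(x_1)=0$ gives the crucial identity $\Psi'(E)=\Phi(E)$. Moreover Theorem \ref{thm_monotonicity} asserts exactly $\partial_\beta L>0$, and since $L=\sqrt{\omega}\,\Phi(\omega\beta)$ this is equivalent to $\Phi'(E)>0$.

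Finally I would combine these facts. Holding $L$ fixed while $c$, hence $\omega=1-c^2$, varies determines $E=E(c)$ implicitly through $\sqrt{\omega}\,\Phi(E)\equiv L$; differentiating and using $\omega'=-2c$ gives $E'(c)=\tfrac{c\,\Phi}{\omega\,\Phi'}>0$. Then, using $\Psi'=\Phi$ together with the elementary simplification $\tfrac{d}{dc}\big(c\,\omega^{-1/2}\big)=\omega^{-3/2}$, one computes
\[
d''(c)=-\frac{d}{dc}\Big(\frac{c}{\sqrt{\omega}}\,\Psi(E)\Big)=-\frac{1}{\omega^{3/2}}\Big(\Psi(E)+\frac{c^2\,\Phi(E)^2}{\Phi'(E)}\Big),
\]
equivalently $I'(c)=c\,\omega^{-3/2}\big(\Psi+\Phi^2/\Phi'\big)>0$. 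Since $\Psi>0$, $\Phi>0$ and $\Phi'>0$, the bracket is strictly positive, so $d''(c)<0$ for every admissible $c\geq 0$ (the case $c=0$ giving $d''(0)=-I(0)<0$, which is why the standing wave is also unstable in the full space). With $n(\vec{\mathcal{L}}_c)=1$ this is exactly the GSS instability configuration, and the theorem follows. The only remaining items to make fully rigorous are the smoothness and finiteness of $\Phi$, $\Psi$ and $x_1(E)$ near the turning point (controlled by $G'(x_1)\neq0$ and the convergence of the period integral already established in Section \ref{existence}), and the verification that the abstract hypotheses of \cite{GSS} hold verbatim in the periodic energy space $H^1(\T_L)\times L^2(\T_L)$.
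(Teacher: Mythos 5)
Your proposal is correct, and it follows the paper's overall strategy --- the Grillakis--Shatah--Strauss reduction to the sign of $d''(c)$, with that sign ultimately extracted from the fixed-period constraint together with the monotonicity $\partial_\beta L>0$ of Theorem \ref{thm_monotonicity} --- but you execute the key computation by a genuinely different device. The paper never writes $\Vert\phi_{c,x}\Vert_{L^2}^2$ as a quadrature: it introduces $\eta_c:=\tfrac{d}{dc}\phi_c$, differentiates the first integral \eqref{ham_energy_beta} with respect to $c$ pointwise in $x$, integrates over $\T_L$ and integrates by parts to reach the identity \eqref{ham_2}, and then controls the sign of the parenthesis $\tfrac{d\beta}{dc}-\tfrac{2c}{\omega}\beta$ by differentiating the fixed-period relation; this yields the inequality $d''(c)\le-\tfrac1\omega\int_{\T_L}\phi_{c,x}^2\,dx$. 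You bypass $\eta_c$ entirely by representing both the period and the momentum as energy integrals over the potential well, $L=\sqrt{\omega}\,\Phi(E)$ and $I(c)=\omega^{-1/2}\Psi(E)$ with $E=\omega\beta$, and exploiting the classical action--period identity $\Psi'(E)=\Phi(E)$ (the boundary term dropping because $E+G(x_1(E))=0$); combined with $E'(c)=c\Phi/(\omega\Phi')>0$ this produces the exact formula $d''(c)=-\omega^{-3/2}\big(\Psi+c^2\Phi^2/\Phi'\big)$. The two conclusions are consistent --- since $\omega^{-3/2}\Psi=\omega^{-1}I(c)$, your formula refines the paper's inequality by identifying the additional negative contribution $-\omega^{-3/2}c^2\Phi^2/\Phi'$ exactly. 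What your route buys is a closed-form expression for $d''(c)$, from which instability for all admissible $c$, including $c=0$ in the full (non-odd) energy space, is immediate and quantitative. What the paper's route buys is that it only ever manipulates the smooth curve $c\mapsto\phi_c$ already constructed in Section \ref{existence}, so it needs no separate justification of the differentiability of the singular integrals $\Phi,\Psi$ or of the turning point $x_1(E)$ --- precisely the loose ends you flag at the end, which are indeed harmless here because $V_n'\neq0$ on $(0,\tfrac12)$ makes $x_1(E)$ smooth by the implicit function theorem and the endpoint singularities are integrable.
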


\begin{proof}
We recall that, by the standard Grillakis-Shatah-Strauss theory (see the main result in \cite{GSS}), we know that, once the existence of the smooth curve of traveling waves solutions and the main spectral information
 of the linearized Hamiltonian around $\vec{\phi}_{c}$ are established, the (in)stablity problem is reduced to study the convexity/concavity of the scalar function \begin{align*}
d(c):=\mathcal{E}\big({\vec{\phi_{c}}}\big)+c\mathcal{P}\big(\vec{\phi_{c}}\big).
\end{align*}
In our current setting, the traveling wave $\vec{\phi}_{c}$ is orbitally stable if and only if $d(c)$ is strictly convex and unstable if and only if $d(c)$ is strictly concave. In other words, the orbital instability is equivalent to show that $d''(c)>0$. Moreover, recalling that $\vec{\phi_{c}}$ is a critical point of the action functional $\mathcal{E}+c\mathcal{P}$, we deduce that
\[
d'(c)=-c\int_{\mathbb{T}_{L}}\phi_{c,x}^{2}\,dx.
\]
Thus, in order to analyze the concavity/convexity of $d(c)$ we differentiate the latter identity with respect to $c$, from where we get 
\begin{align}\label{asdasd}
d''(c)=\frac{d}{dc}\left(-c\int_{\mathbb{T}_{L}}\left(\phi_{c,x}\right)^{2}\,dx\right)=-\int_{\mathbb{T}_{L}}\left(\phi_{c,x}\right)^{2}\,dx -c\frac{d}{dc}\int_{\mathbb{T}_{L}}\left(\phi_{c,x}\right)^{2}\,dx.
\end{align}
Now, for the sake of simplicity, from now on we shall denote by $\eta_c$ the derivative of the solution with respect to the velocity $c$, that is, $\eta_{c}:=\frac{d}{dc}\phi_{c}$. We claim that the right-hand side of \eqref{asdasd} is strictly negative. Then, it suffices to analyze
\begin{align}\label{asdasdasd}
-c\frac{d}{dc}\int_{\mathbb{T}_{L}}\left(\phi_{c,x}\right)^{2}\,dx & =-2c\int_{\mathbb{T}_{L}}\eta_{c,x}\phi_{c,x}\,dx.
\end{align}
We intend to prove that the right-hand side of the latter equation is negative. In fact, first of all let us recall that once $L$ and $c$ are fixed, the traveling wave solution satisfies the Hamiltonian equation with energy $\beta$ (see \eqref{hamilt}, \eqref{gamma}): \begin{align}\label{ham_energy_beta}
\dfrac{1}{2}\phi_x^2-\dfrac{1}{\omega}\big(V_n(\phi)-V_n(0)\big)=\beta.
\end{align}
Hence, by differentiating the latter equation with respect to the speed $c$ we obtain
\begin{align*}
	\phi_{c}'\eta_{c}'-\frac{2c}{\omega^2}\left(V_{n}\left(\phi_{c}\right)-V_{n}\left(0\right)\right)-\frac{1}{\omega}V_{n}'\left(\phi_{c}\right)\eta_{c}=\frac{d}{dc}\beta.
\end{align*}
Recalling that the traveling wave solution satisfies $\frac{1}{\omega}V_{n}'\left(\phi_{c}\right)=\phi_{c}''$, we can rewrite the latter equation in the more convenient form as 
\begin{align*}
	\phi_{c}'\eta_{c}'-\frac{2c}{\omega^2}\big(V_{n}\left(\phi_{c}\right)-V_{n}(0)\big)-\phi_{c}''\eta_{c}=\frac{d}{dc}\beta.
\end{align*}
Then, by integrating the latter equation over $\mathbb{T}_{L}$ and performing some integration  by parts it follows 
\begin{align*}
	2\int_{\mathbb{T}_{L}}\phi_{c}'\eta'_{c}\,dx=L\frac{d\beta}{dc}+\frac{2c}{\omega^2}\int_{\mathbb{T}_{L}}\big(V_{n}(\phi_{c})-V_{n}(0)\big)dx.\end{align*}
Plugging identity \eqref{ham_energy_beta} to the last term on the right-hand side above, we deduce
\begin{align}\label{ham_2}
2\int_{\mathbb{T}_{L}}\phi_{c}'\eta'_{c}\,dx=\left(\frac{d}{dc}\beta-\frac{2c}{\omega}\beta\right)L+\frac{c}{\omega}\int_{\mathbb{T}_{L}}\phi_{c,x}^{2}\,dx.
\end{align}
Therefore, it suffices to analyze the sign of the inner parenthesis $\frac{d}{dc}\beta-\frac{2c}{\omega}\beta$. To achieve this, recalling the formula for the period in \eqref{eq:peri}, we define 
\begin{align}
	L  = \sqrt{2\omega}\int_{x_0}^{x_1}\tfrac{dx}{\sqrt{\omega\beta-\big(V_n(x)-V_n(0)\big)}} =:\sqrt{2\omega}\widetilde{L}.\label{eq:periodfor1}
	\end{align}
For the sake of simplicity, from now on we shall denote by $\widetilde{\beta}:=\omega\beta$. Hence, as a direct application of Theorem \ref{thm_monotonicity} we infer that $\frac{d}{d\widetilde{\beta}}\widetilde{L}>0$. Differentiating formula \eqref{eq:periodfor1} with respect $c$, recalling that the period $L$ is fixed, we obtain
\begin{align}\label{dldc}
0=-\frac{\sqrt{2}c}{\sqrt{\omega}}\widetilde{L}+\sqrt{2\omega}\frac{d\widetilde{L}}{d\widetilde{\beta}}\frac{d\widetilde{\beta}}{dc}.\end{align}
On the other hand, it easily follows from the definition of $\widetilde{\beta}$ that $\frac{d\widetilde{\beta}}{dc}=-2c\beta+\omega\frac{d\beta}{dc}$. Thus, plugging this relation into \eqref{dldc} and recalling that $d\widetilde{L}/d\widetilde{\beta}>0$ we infer
\[
\omega\sqrt{2\omega}\frac{d\widetilde{L}}{d\widetilde{\beta}}\left(\frac{d\beta}{dc}-\frac{2c}{\omega^2}\beta\right)=\frac{\sqrt{2}c}{\sqrt{\omega}}\widetilde{L} \quad \implies \quad \dfrac{d\beta}{dc}-\dfrac{2c}{\omega^2}\beta>0.
\]
Finally, by plugging the latter inequality into \eqref{ham_2} and recalling identity \eqref{asdasdasd} we conclude\footnote{Notice that we arrive at the same conclusion if $c<0$.}
\begin{align}
	\frac{d}{dc}\left(-c\int_{\mathbb{T}_{L}}\left(\phi_{c,x}\right)^{2}\,dx\right) \leq-\frac{1}{\omega}\int_{\mathbb{T}_{L}}\left(\phi_{c,x}\right)^{2}\,dx.\nonumber 
	\end{align}
Therefore, $d''\left(c\right)<0$,  and hence, by using the main result in \cite{GSS} we conclude that $\vec{\phi}_{c}\left(x-ct\right)$ is orbitally unstable in the energy space.
\end{proof}

\section{Extension of the main result in \cite{deNa}}\label{ext_nata}

\subsection{The Model} As mentioned in the introduction, as a  by-product of our current analysis we are able to extend the main result in \cite{deNa} to general $n\in\N$. In order to avoid misunderstandings with our previous equation, from now on we shall denote the unknown by $\varphi(t,x)$. With this in mind, in the sequel we shall consider the following type of generalization of the $\phi^{4}$-equation, that we shall call $\varphi^{2n+2}$-equation: 
\begin{align}\label{phi_w}
\partial_t^2\varphi-\partial_x^2\varphi-\varphi+\varphi^{2n+1}=0.
\end{align}
Before recalling the main results in \cite{deNa}, let us start by introducing some notations. To be consistent with our analysis above, we rewrite equation \eqref{phi_w} as
\begin{align}
\partial_{t}^{2}\varphi-\partial_{x}^{2}\varphi+\widetilde{V}_{n}'(\varphi)=0,\label{eq:phi_w_v}
\end{align}
where in this setting the potential is given by
\begin{align}\label{pot2}
\widetilde{V}_{n}(x):=-\frac{x^{2}}{2}+\frac{x^{2n+2}}{2\left(n+1\right)}.
\end{align}
We point out that, in sharp contrast with model \eqref{phif}, the potential associated to in \eqref{pot2} is not getting additional different minima as $n$ increases, and hence, no more soliton sectors. Instead, potential \eqref{pot2} has always (for all $n\in\N$) exactly $3$ real roots, which are located at $0,\,(n+1)^{1/2n},\,-(n+1)^{1/2n}$. Notice that $0$ has always multiplicity $2$, while both $\pm (n+1)^{1/2n}$  have  multiplicity $\frac{n}{2}$ if $n$ is even and multiplicity $n$ otherwise.

\medskip

%Trivialy, $\tilde{V}_{n}(0)=0$. 
On the other hand, we can write equation \eqref{eq:phi_w_v} as a first order system for $\vec{\varphi}=(\varphi_1,\varphi_2)$ as \begin{align}\label{phif_w_v}
\begin{cases}
\partial_t\varphi_1=\varphi_2,
\\ \partial_t\varphi_2=\partial_x^2\varphi_1-\tilde{V}_n'(\varphi_1).
\end{cases}
\end{align}
As for model \eqref{phif} above, the Hamiltonian structure of the system gives us the energy conservation of  \eqref{phif_w_v}, that is, the following functional is conserved along the flow:
\begin{align}\label{energy2} 
\widetilde{\mathcal{E}}(\vec{\varphi}(t))&:=\dfrac{1}{2}\int_0^L \big(\varphi_2^2+\varphi_{1,x}^2+2\widetilde{V}_n(\varphi_1)\big)(t,x)dx=\widetilde{\mathcal{E}}(\vec{\varphi}_0).
\end{align}
We also have the conservation of momentum which is given by:
\begin{align}\label{momentum2}
\widetilde{\mathcal{P}}(\vec{\varphi}(t)):=\int_0^L \varphi_2(t,x)\varphi_{1,x}(t,x)dx=\widetilde{\mathcal{P}}(\vec{\varphi}_0).
\end{align}
%We point out that from these conservation laws it follows that $H^1(\T_L)\times L^2(\T_L)$ defines the natural energy space associated to system \eqref{phi_w}.

Regarding the results in \cite{deNa}, in the case of traveling wave solutions orbiting around $(0,0)$, the authors in \cite{deNa} were able to prove the orbital instability in the whole energy space for $n=1,2$. However, the proof of the sign of $d''(c)$ relies in some  numerical computations and is argued by the plot of a ``hidden'' function (they have not provided the function they are plotting to justify this sign). This is an important remark, since being able to compute the sign of $d''(c)$ is usually a very challenging part of the analysis, and is the only reason why the authors in \cite{deNa} are not able to extend their result for $n$ larger than $2$. In this section, we intend to extend their orbital instability result for all values of $n\in\N$, that is, for all $\varphi^{2n+2}$-equations.

\medskip

We point out that, at the date of this publication, the proof of the results in \cite{deNa} contain typos, some of them problematic, but in such a way that they ``cancel each other'' so that the authors end up with the correct conclusions. Thus, in order to extend their result we need to start by fixing some of these typos since they would provoke different conclusions in our analysis.

\subsection{Extension of the main result in \cite{deNa}}

We start by recalling the Hamiltonian system satisfied by traveling wave solutions:
\begin{align}\label{system1}
\begin{cases}\dot{u}=v,
\\ \dot{v}=\tfrac{1}{\omega}\widetilde{V}_{n}'(u),
\end{cases}
\end{align}
where $\omega:=1-c^2$.
By direct computations $\widetilde{V}'_n(u)=u(u^n-1)(u^n+1)$, so the system above has three critical points: $(u,v)=(0,0)$ which is a stable center, and $(u,v)=(\pm1,0)$ which are both saddle points. 
%{\color{red} can you show/give a reason why there is no critical point in %between 0 and 1 (this is important)? Compute them all.}
We also recall that the Hamiltonian assoicated the system \ref{system1} is
\begin{align}
\widetilde{\mathcal{{H}}}\left(u,v\right):=\frac{1}{2}v^{2}-\frac{1}{\omega}\widetilde{V}_n(u)=\frac{1}{2}v^{2}+\frac{1}{\omega}\left(\frac{u^{2}}{2}-\frac{u^{2n+2}}{2\left(n+1\right)}\right)\label{hamilt1}
\end{align}

Therefore, by the standard ODE theory for Hamiltonian equations (see for example \cite{ChiL}), we know that all periodic solutions of \eqref{system1} orbiting around $(0,0)$ corresponds to regular level sets of $\widetilde{\mathcal{H}}$, with energy $\beta\in(0,\widetilde{E}_\star)$, where the maximal energy level $\widetilde{E}_\star=\frac{n}{2\omega (n+1)}$. Finally, we recall the period $L$ can be express as
\begin{equation}
L=\sqrt{2}\int_{x_0}^{x_1}\tfrac{dx}{\sqrt{\beta+\frac{1}{\omega}\widetilde{V}_n(x)}}.\label{eq:peri1}
\end{equation}

The main point now is to show the monotonicity of the period $L$ with respect
to the level of the energy $\beta$.

\medskip

\textbf{Monotonicity of the period map:} In section $2$ in \cite{deNa}, more specifically just below identity $(2.5)$, the authors claim that the period map $L(\beta)$  is strictly decreasing in $\beta$. Consequently, they claim that $L(\beta)$ goes to $+\infty$ when $\beta$ goes to zero, and that $L(\beta)$ converges to a finite constant when $\beta$ goes to the maximal possible energy $\beta_{\mathrm{max}}$. We give three different reasons why this cannot hold. First, in the first line of the proof of Lemma $2.1$ in \cite{deNa} the authors define the function $f(h):=-h+h^{2k+1}$. However, this definition of $f(h)$ does not match the definitions of the result the authors are refering to (see the definition of the Hamiltonian at the beginning of Section $2$ in \cite{deNa}). Specifically, $f(h)$ is missing a minus sign. Secondly, it is a well-known fact that $\varphi^4$ and $\varphi^6$ have explicit odd Kink solutions. Moreover, all of these odd Kink solutions belong to the separatrix curve, have finite energy and infinite period. This contradicts the fact that $L(\beta)'<0$ since it is mandatory for $L(\beta)$ to goes to $+\infty$ when $\beta\to\beta_{\mathrm{max}}$. Finally, by taking advantage of the fact that all the involved functions (including the period) are explicit in the case $n=1$, the second author of the present work has explicitly proved in \cite{Pa} that \[L(\beta)\to+\infty \ \hbox{ when  }\ \beta\to\beta_{\mathrm{max}} \quad \hbox{ and } \quad L(\beta)\to 2\pi\sqrt{\omega} \ \hbox{ when } \ \beta\to0,
\]
which also contradicts Lemma $2.1$ in \cite{deNa}. Summarizing, we have the following lemma.

\begin{lem}\label{lem:monotonicity2}
Given the formula \eqref{eq:peri1}, one has $\frac{dL}{d\beta}>0$, for all $\beta\in(0,\widetilde{E}_\star)$.
\end{lem}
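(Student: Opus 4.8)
The plan is to follow exactly the strategy of Theorem~\ref{thm_monotonicity} and appeal to the criterion of \cite{Chi}. Since the origin is a nondegenerate center of \eqref{system1} (indeed $\widetilde{V}_n''(0)=-1\neq0$) and the two neighbouring saddles sit at $x=\pm1$, the region swept out by the periodic orbits as $\beta$ runs through $(0,\widetilde{E}_\star)$ is precisely $(-1,1)$. Hence it suffices to prove that
\[
\widetilde{G}(x):=-\frac{\widetilde{V}_n(x)}{(\widetilde{V}_n'(x))^2}
\]
is strictly convex on $(-1,1)$; the criterion in \cite{Chi} then yields $\frac{dL}{d\beta}>0$ for every $\beta\in(0,\widetilde{E}_\star)$, just as in Section~\ref{existence}.

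First I would record the elementary derivatives
\[
\widetilde{V}_n'=x^{2n+1}-x,\qquad \widetilde{V}_n''=(2n+1)x^{2n}-1,\qquad \widetilde{V}_n'''=2n(2n+1)x^{2n-1},
\]
and reuse verbatim the general identity already derived in Section~\ref{existence},
\[
\widetilde{G}''(x)=\frac{3\widetilde{V}_n''\big((\widetilde{V}_n')^2-2\widetilde{V}_n\widetilde{V}_n''\big)+2\widetilde{V}_n\widetilde{V}_n'\widetilde{V}_n'''}{(\widetilde{V}_n')^4}=:\frac{\widetilde{\mathcal{V}}_n(x)}{(\widetilde{V}_n')^4}.
\]
Because the denominator is non-negative, the whole question reduces to the sign of the numerator $\widetilde{\mathcal{V}}_n$.

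The decisive, and I expect by far the cleanest, step is to substitute these three derivatives and factor. Writing $s:=x^{2n}$, a direct expansion gives
\[
(\widetilde{V}_n')^2-2\widetilde{V}_n\widetilde{V}_n''=\frac{n}{n+1}\,x^{2n+2}\big((2n+1)-s\big),\qquad 2\widetilde{V}_n\widetilde{V}_n'\widetilde{V}_n'''=\frac{2n(2n+1)}{n+1}\,x^{2n+2}(s-(n+1))(s-1),
\]
whence
\[
\widetilde{\mathcal{V}}_n=\frac{n}{n+1}\,x^{2n+2}\,B(s),\qquad B(s):=-(2n+1)s^2+(8n^2+2n+2)s+(4n^2-1).
\]
Now for $x\in(-1,1)$ one has $s=x^{2n}\in[0,1)$, and $B$ is concave (its leading coefficient is negative) with $B(0)=4n^2-1>0$ and $B(1)=12n^2>0$ for every $n\geq1$; by concavity $B\geq\min\{B(0),B(1)\}>0$ on $[0,1]$. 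Hence $\widetilde{\mathcal{V}}_n(x)>0$ for all $x\in(-1,1)\setminus\{0\}$, so $\widetilde{G}''>0$ there.

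Finally, the apparent singularity of $\widetilde{G}''$ at $x=0$ is spurious: the factor $x^2$ cancels between numerator and denominator of $\widetilde{G}$, leaving the manifestly smooth expression
\[
\widetilde{G}(x)=\frac{(n+1)-x^{2n}}{2(n+1)(x^{2n}-1)^2},
\]
whose denominator never vanishes on $(-1,1)$. Thus $\widetilde{G}$ is real-analytic there with $\widetilde{G}''\geq0$ and $\widetilde{G}''>0$ off the single point $x=0$, hence strictly convex on $(-1,1)$, which is exactly the hypothesis of \cite{Chi}. The only point requiring care is the bookkeeping in the cancellation that produces the quadratic $B(s)$; once it is in place, positivity of $B$ is immediate, in sharp contrast with the combinatorially heavy estimates needed for the $\phi^{4n}$ potential in Lemmas~\ref{mon_lem_1}--\ref{mon_lem_2} and Proposition~\ref{mon_prop_1}.
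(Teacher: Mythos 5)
Your proof is correct and takes essentially the same route as the paper: both reduce the lemma to Chicone's convexity criterion for $-\widetilde{V}_n(x)/(\widetilde{V}_n'(x))^2$ on $(-1,1)$ and arrive at exactly the same rational expression, whose numerator is (up to the positive factor $\tfrac{n}{n+1}x^{2n+2}$) the quadratic $B(s)$ in $s=x^{2n}$; the paper verifies its positivity by the direct bound $x^{4n}\le x^{2n}\le 1$, while you use concavity plus the endpoint values $B(0)=4n^2-1$ and $B(1)=12n^2$, a purely cosmetic difference. Your extra observation that the apparent singularity at $x=0$ is removable is sound, though the paper's displayed formula already incorporates that cancellation.
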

\begin{proof}
 In the same fashion as before, by using the main result in \cite{Chi}, it suffices to show the convexity of $-\widetilde{V}_n(x)/(\widetilde{V}_n'(x))^2$
for all $x\in(-1,1)$.
In fact, by using the explicit form of $\widetilde{V}_n$, and after performing some direct computations we obtain
\[
-\dfrac{d^2}{dx^2}\dfrac{\widetilde{V}_{n}(x)}{(\widetilde{V}_{n}'(x))^2}=\frac{nx^{2n-2}\left(4n^{2}-1+2(1+n+4n^{2})x^{2n}-(1+2n)x^{4n}\right)}{(1+n)(-1+x^{2n})^{4}}.
\]
Since $x^{2n}\leq 1$ for $\vert x\vert <1$, and $n\geq1$, it immediately follows:
\[
4n^{2}-1+2(1+n+4n^{2})x^{2n}-(1+2n)x^{4n}\geq 3n^2+(1+4n^{2n})x^{2n}>0,\] 
which concludes the proof.
\end{proof}

%Now, by direct computations it is easy to prove that $\tfrac{d}{d\beta}L>0$ %{\color{red}write the whole proof of the monotonicity.}

%.
Then, by using the monotonicity of the period map, we deduce again the existence of a limit as $\beta\to0^+$. We shall recycle the notation and call this limit \[
\lim_{\beta\to 0^+}L(\beta)=:\sqrt{\omega}\widetilde{\delta_n}.
\]

\textbf{Spectral analysis:}
To study (in)stability of traveling waves, in \cite{deNa}, the authors analyzed the linearized operator around the traveling wave which  is given by
\[
\mathcal{L}_{c}y:=-\omega y''+\left(-y+(2n+1)\varphi_{c}^{2n}y\right).
\]
In Section 3 of \cite{deNa}, the authors applied Theorem \ref{thm:Neves2} (of the present work) to study the spectral properties of the scalar linearzied operator.  But, in Lemma 3.2 in \cite{deNa}, the authors obtained the relation $\theta=\frac{dL}{d\beta}$. Nevertheless, notice that after fixing the sign of $\tfrac{dL}{d\beta}$, this latter relation, together with the spectral analysis carried out in \cite{deNa}, would lead to different conclusions (so that it would not be possible to conclude the main theorems in \cite{deNa}). However, it turns out that this relation $\theta=\tfrac{dL}{d\beta}$ is not correct. The essential reason is that, in order to use the quantity $\theta$ defined in Theorem \ref{thm:Neves1}, one has to ensure that the Wronskian determinant is $1$ in the right order (notice that they have switched the order of the entries in the Wronskian to obtain an extra minus sign, see Theorem \ref{thm:Neves1} above or \cite{N} for further details). Thus, with this wrong relation $\theta=\frac{dL}{d\beta}$ and the opposite sign of $\frac{dL}{d\beta}$, the authors could still conclude the correct spectral properties of the linear scalar operator due to this ``cancellation'' of double minus signs (see Theorem \ref{thm:Neves2} above or \cite{N} to see the impact of the sign of $\theta$ in the spectral information). Summarizing, we have the following lemma.

\begin{lem} 
Under our current hypothesis, the following relation between holds: $\theta=-\frac{\partial L}{\partial\beta}$.
\end{lem}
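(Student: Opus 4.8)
The plan is to reproduce the argument of Lemma \ref{lem:thetaLbeta} almost verbatim, replacing the potential $V_n$ by $\widetilde{V}_n$ throughout and taking advantage of the normalization $\widetilde{V}_n(0)=0$, so that the Hamiltonian level-set relation reads $\tfrac12\varphi_x^2-\tfrac1\omega\widetilde{V}_n(\varphi)=\beta$ with no shift. The only structural ingredients used in Lemma \ref{lem:thetaLbeta} were the oddness and periodicity of the profile, the fact that $\varphi_c'$ generates $\ker\mathcal{L}_c$, and the monotonicity $\tfrac{dL}{d\beta}>0$ now furnished by Lemma \ref{lem:monotonicity2}; hence the same scheme applies here without modification.

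Concretely, I would first define $\mu$ to be the unique solution of
\[
-\omega\mu''+\widetilde{V}_n''(\varphi_c)\mu=0,\qquad \mu(0)=0,\quad \mu'(0)=\frac{1}{\varphi_c'(0)},
\]
so that, with this precise ordering of the entries, the Wronskian satisfies $W(\varphi_c',\mu)=\varphi_c'(0)\mu'(0)=1$. This ordering is exactly the point where \cite{deNa} slipped: reversing the roles of $\varphi_c'$ and $\mu$ flips the sign of $\theta$ in Theorem \ref{thm:Neves1}, and it is this spurious sign that must be tracked. With $W(\varphi_c',\mu)=1$ in the correct order, Theorem \ref{thm:Neves1} gives $\mu(x+L)=\mu(x)+\theta\varphi_c'(x)$, and evaluating at $x=0$ (where $\mu(0)=0$) yields $\theta=\mu(L)/\varphi_c'(0)$.

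Next I would identify $\mu$ with $\partial_\beta\varphi_c$. Differentiating the profile equation $\omega\varphi_c''=\widetilde{V}_n'(\varphi_c)$ in $\beta$ shows that $\partial_\beta\varphi_c$ solves the same linear ODE as $\mu$; evaluating the level-set relation $\tfrac12\varphi_{c,x}^2-\tfrac1\omega\widetilde{V}_n(\varphi_c)=\beta$ at $x=0$, where $\varphi_c(0)=0$ by oddness, produces $\partial_\beta\varphi_c(0)=0$ together with $\tfrac12\varphi_{c,x}^2(0)=\beta$, and the latter differentiates in $\beta$ to $\partial_\beta\varphi_{c,x}(0)=1/\varphi_c'(0)$. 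Thus $\partial_\beta\varphi_c$ and $\mu$ carry identical initial data, and by uniqueness of the linear Cauchy problem $\partial_\beta\varphi_c\equiv\mu$. Finally, differentiating the identity $\varphi_c(L)=0$, valid since the profile is odd and $L$-periodic, in $\beta$ and using $\varphi_c'(L)=\varphi_c'(0)$ gives $\partial_\beta L=-\varphi_c'(0)^{-1}\,\partial_\beta\varphi_c(L)=-\mu(L)/\varphi_c'(0)=-\theta$, which is the claimed relation.

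The computation itself is routine; the genuine difficulty is entirely bookkeeping, namely keeping the Wronskian in the order $(\varphi_c',\mu)$ and propagating the resulting sign correctly, since this is precisely where the compensating double sign error in \cite{deNa} (the reversed Wronskian together with the wrong sign of $dL/d\beta$) originates. Once the monotonicity $dL/d\beta>0$ of Lemma \ref{lem:monotonicity2} is fed in, the relation $\theta=-\partial_\beta L<0$ is exactly what is needed in Theorem \ref{thm:Neves2} to recover the correct spectral count for the scalar linearized operator.
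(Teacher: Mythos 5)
Your proposal is correct and follows exactly the route the paper intends: the paper gives no separate proof of this lemma, deferring implicitly to the argument of Lemma \ref{lem:thetaLbeta}, and your write-up is precisely that argument transplanted to the potential $\widetilde{V}_n$ (with the simplification $\widetilde{V}_n(0)=0$), including the key point about the ordering of the entries in the Wronskian $W(\varphi_c',\mu)=1$ that the paper identifies as the source of the sign error in \cite{deNa}. All the individual steps (the identification $\partial_\beta\varphi_c\equiv\mu$ via uniqueness of the linear Cauchy problem, the evaluation $\theta=\mu(L)/\varphi_c'(0)$, and the differentiation of $\varphi_c(L)=0$ in $\beta$ using periodicity) check out.
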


We point out that after fixing these two typos, the proofs given in \cite{deNa} follow.

\medskip

Finally, with the monotonicity of the period and spectral properties, we are able to extend the main result in \cite{deNa}.
\begin{thm}
Let $n\in\N$ and consider $L>0$ arbitrary but fixed. For any speed $c\in(-1,1)$ such that $L>\lambda_n$, the traveling wave solution $\vec{\varphi}_{c}\left(x-ct\right)$ constructed in Section $2$ in \cite{deNa} is orbitally unstable in the energy space $H^{1}(\mathbb{T}_{L})\times L^{2}(\mathbb{T}_{L})$ under the periodic flow of the $\varphi^{2n+2}$ model.
\end{thm}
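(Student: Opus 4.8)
The plan is to mirror the proof of Theorem \ref{thm:instab} essentially line by line, since equation \eqref{phi_w} shares the same Hamiltonian and symmetry structure as \eqref{phif}, and the only model-dependent inputs are the monotonicity of the period map and the resulting spectral count. First I would record that, with $L$ and $c$ fixed, Lemma \ref{lem:monotonicity2} together with the implicit function theorem produces a smooth branch $c\mapsto\varphi_c\in H^\infty(\T_L)$ of odd periodic solutions orbiting $(0,0)$, exactly as in Section \ref{existence}; this supplies the smooth curve required by the Grillakis--Shatah--Strauss theory \cite{GSS}. Next I would transport the spectral information: using the corrected relation $\theta=-\partial_\beta L<0$ and Neves' Theorems \ref{thm:Neves1}--\ref{thm:Neves2}, the scalar operator $\mathcal{L}_c=-\omega\partial_x^2-1+(2n+1)\varphi_c^{2n}$ has exactly one negative eigenvalue (with even ground state), a simple eigenvalue at $0$ spanned by $\varphi_c'$, and the rest of its spectrum positive and bounded away from zero, which is the analog of Proposition \ref{prop:linearspec}. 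The same min--max argument employed in Lemma \ref{lem:matrixspe} then shows that the matrix operator $\vec{\mathcal{L}}_c=(\widetilde{\mathcal{E}}''+c\widetilde{\mathcal{P}}'')(\vec{\varphi}_c)$ has a single negative eigenvalue, a simple zero as its second eigenvalue, and a positive remainder.

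With this spectral picture (so that $\vec{\mathcal{L}}_c$ has exactly one negative direction), the Grillakis--Shatah--Strauss criterion reduces orbital instability to the sign of $d''(c)$, where $d(c):=\widetilde{\mathcal{E}}(\vec{\varphi}_c)+c\widetilde{\mathcal{P}}(\vec{\varphi}_c)$, instability holding precisely when $d''(c)<0$. Since $\vec{\varphi}_c$ is a critical point of $\widetilde{\mathcal{E}}+c\widetilde{\mathcal{P}}$ one has $d'(c)=-c\int_{\T_L}\varphi_{c,x}^2\,dx$, so
\[
d''(c)=-\int_{\T_L}\varphi_{c,x}^2\,dx-2c\int_{\T_L}\eta_{c,x}\varphi_{c,x}\,dx,\qquad \eta_c:=\tfrac{d}{dc}\varphi_c.
\]
To evaluate the cross term I would differentiate the level-set relation $\tfrac12\varphi_x^2-\tfrac1\omega\widetilde{V}_n(\varphi)=\beta$ in $c$, substitute $\tfrac1\omega\widetilde{V}_n'(\varphi_c)=\varphi_c''$, integrate over $\T_L$ and integrate by parts, yielding the exact analog of \eqref{ham_2},
\[
2\int_{\T_L}\varphi_c'\eta_c'\,dx=\Big(\tfrac{d\beta}{dc}-\tfrac{2c}{\omega}\beta\Big)L+\tfrac{c}{\omega}\int_{\T_L}\varphi_{c,x}^2\,dx.
\]

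The crux, and the step I expect to be the main obstacle, is fixing the sign of the bracket $\tfrac{d\beta}{dc}-\tfrac{2c}{\omega}\beta$, which is exactly the quantity that \cite{deNa} could only access numerically for $n=1,2$. Here I would rescale as in \eqref{eq:peri1}, writing $\widetilde{L}=L/\sqrt{2\omega}$ and $\widetilde\beta=\omega\beta$, so that Lemma \ref{lem:monotonicity2} reads $d\widetilde{L}/d\widetilde\beta>0$ for all $n$. Differentiating $L=\sqrt{2\omega}\,\widetilde{L}(\widetilde\beta)$ in $c$ at fixed $L$ as in \eqref{dldc}, and using $d\widetilde\beta/dc=-2c\beta+\omega\,d\beta/dc$, forces $d\widetilde\beta/dc>0$, hence $\tfrac{d\beta}{dc}-\tfrac{2c}{\omega}\beta>0$. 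Feeding this back shows $\int_{\T_L}\varphi_c'\eta_c'\,dx>0$, so the cross term $-2c\int_{\T_L}\eta_{c,x}\varphi_{c,x}\,dx<0$ for $c>0$, and therefore $d''(c)<0$. The Grillakis--Shatah--Strauss instability theorem then yields the claim, and the symmetry reductions noted in the introduction extend it to negative speeds and to anti-traveling waves. Thus the entire argument is uniform in $n$, the only genuinely new ingredients over \cite{deNa} being the clean monotonicity Lemma \ref{lem:monotonicity2} and the corrected sign relation $\theta=-\partial_\beta L$.
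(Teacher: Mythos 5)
Your proposal is correct and follows essentially the same route as the paper: the paper likewise combines Lemma \ref{lem:monotonicity2}, the corrected relation $\theta=-\partial_\beta L$ (so that the spectral analysis of \cite{deNa} carries over as in Proposition \ref{prop:linearspec} and Lemma \ref{lem:matrixspe}), and then repeats the computation of Theorem \ref{thm:instab} --- differentiating the level-set relation in $c$, obtaining the analog of \eqref{ham_2}, and fixing the sign of $\tfrac{d\beta}{dc}-\tfrac{2c}{\omega}\beta$ via the rescaled variables $\widetilde{L},\widetilde{\beta}$ --- to conclude $d''(c)<0$ and hence instability by \cite{GSS}. Your sign bookkeeping in that last step is in fact slightly cleaner than the paper's display (which contains a harmless $\omega^{2}$-versus-$\omega$ typo), but the argument is the same.
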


\begin{proof}
Again, without loss of generality we shall assume $c>0$. The proof follows in a similar fashion as the one in the previous section, and hence we shall only give its main points. First, recall that by the main result of Grillakis-Shatah-Strauss in \cite{GSS}, the (in)stablity problem is reduced to study the convexity/concavity of the scalar function:
	\begin{equation}
	d\left(c\right):=\widetilde{\mathcal{E}}\left(\varphi_{c}\right)+c\widetilde{\mathcal{P}}\left(\varphi_{c}\right).\label{eq:dscalar}
	\end{equation}
	Once again, in our current setting, the traveling wave $\varphi_{c}$ is orbitally
	stable if and only if $d\left(c\right)$ is convex. Then, in a similar fashion as before, computing the second derivative of $d(c)$ we deduce that it suffices to analyze
\begin{align}\label{asdasdasd1}
-c\frac{d}{dc}\int_{\mathbb{T}_{L}}\left(\varphi_{c,x}\right)^{2}\,dx & =-2c\int_{\mathbb{T}_{L}}\widetilde{\eta}_{c,x}\varphi_{c,x}\,dx,
\end{align}
where $\widetilde{\eta}_{c}:=\frac{d}{dc}\varphi_{c}$.  Again, we will prove that the right-hand side of the  equation above is negative. In fact, by differentiating (with respect to the speed $c$) the Hamiltonian equation with energy $\beta$ and performing the same manipulation as our proof of Theorem \ref{thm:instab}, we obtain
\begin{align}\label{ham_21}
2\int_{\mathbb{T}_{L}}\varphi_{c}'\tilde{\eta}'_{c}\,dx=\left(\frac{d}{dc}\beta-\frac{2c}{\omega}\beta\right)L+\frac{c}{\omega}\int_{\mathbb{T}_{L}}\varphi_{c,x}^{2}\,dx.
\end{align}
On the other hand, proceeding exactly as before we deduce $\frac{d}{dc}\beta-\frac{2c}{\omega}\beta>0$. Therefore, by plugging the latter inequality into \eqref{ham_21} and recalling identity \eqref{asdasdasd1} we conclude\footnote{Notice that we arrive to the same conclusion if $c<0$.} 
\begin{align}
\frac{d}{dc}\left(-c\int_{\mathbb{T}_{L}}\varphi_{c,x}^{2}\,dx\right) \leq-\frac{1}{\omega}\int_{\mathbb{T}_{L}}\varphi_{c,x}^{2}\,dx.\nonumber 
\end{align}
Thus, $d''(c)<0$,  and hence, by using the main result in \cite{GSS} we conclude that $\vec{\varphi}_{c}(x-ct)$ is orbitally unstable in the energy space.

%Thus, in a similar fashion as before, by differentiating with respect to $c$ the Hamiltonian equation associated to \eqref{phi_w} we obtain $-c\frac{d}{dc}\int_{\mathbb{T}_{L}}\left(\varphi_{c,x}\right)^{2}\,dx\leq0.$
%{\color{red}way too short proof. You can shorten the proof but not that much. }
%Therefore, $d''\left(c\right)<0$, what concludes the proof by applying the main result in \cite{GSS}.
\end{proof}

%{\color{red} Use medskip if you break a line.}

\end{document}